\newlength{\imagewidth}
\theoremstyle{plain}
\newtheorem{thm}{Theorem}[section]
\newtheorem{cor}[thm]{Corollary}
\newtheorem{lem}[thm]{Lemma}
\newtheorem{prop}[thm]{Proposition}
\theoremstyle{definition}
\theoremstyle{remark}
\newtheorem{oss}{Remark}
\newcommand{\norm}[1]{\left\lVert#1\right\rVert}
\newcommand{\p}{\mathbb{P}}
\newcommand{\pt}{\tilde{\p}}
\newcommand{\pa}{\p^{(\alpha)}}
\newcommand{\E}{\mathbb{E}}
\newcommand{\Eq}{\E^{(q)}}
\newcommand{\Ea}{\E^{(\alpha)}}
\newcommand{\Stq}{\mc S_t^{(q)}}
\newcommand{\Sta}{\mc S_t^{(\alpha)}}
\newcommand{\Ytq}{Y_t^{(q)}}
\newcommand{\Yta}{Y_t^{(\alpha)}}
\newcommand{\Et}{\tilde{\E}}
\newcommand{\A}{\mathcal{A}}
\newcommand{\G}{\mathcal{G}}
\newcommand{\zeroinf}{(0,\infty)}
\newcommand{\zeroinfc}{[0,\infty)}
\newcommand{\mb}[1]{\mathbb{#1}}
\newcommand{\mc}[1]{\mathcal{#1}}
\newcommand{\Hit}[1]{H{(#1)}}
\newcommand{\NormB}{\norm{B(N-1)}_{\infty}}
\newcommand{\Lxz}{L_{x_0, x_0}}
\newcommand{\Lxy}{L_{x,y}}
\newcommand{\bn}{B\left(N-1\right)}
\newcommand{\h}{h}
\newcommand{\hq}{h_q}
\newcommand{\T}{T}
\newcommand{\hab}{h_{a,b}}
\newcommand{\Uab}{U_{a,b}}
\newcommand{\Uabd}{U^*_{a,b}}
\newcommand{\roab}{\rho_{a,b}}
\newcommand{\nuab}{\nu_{a,b}}
\newcommand{\Mab}{\mc M_{a,b}}
\newcommand{\Eg}{\mc E^g}
\newcommand{\Chat}{\mc C_0 \left([0, \infty)\right)}
\newcommand{\Chatab}{\mc C_0 \left([a,b)\right)}
\newcommand{\Chatabp}{\mc C_0^{^+} \left([a,b)\right)}
\newcommand{\At}{\tilde{\mc A}}
\newcommand{\sab}{\sigma(a,b)}
\newcommand{\qbn}{q_{_{BN}}}
\newcommand{\qg}{q_g}
\title{A probabilistic view on the long-time behaviour of growth-fragmentation semigroups with bounded fragmentation rates}
\author{Benedetta Cavalli\footnote{Institut für Mathematik, Universität Zürich, Switzerland. E-mail address: \href{mailto:name.surname@gmail.com}{benedetta.cav@gmail.com}.}}
\begin{document}
	\maketitle
	\vspace{-0.6 cm}
	
	\begin{abstract}
The growth-fragmentation equation models systems of particles that grow and reproduce as time passes. An important question concerns the asymptotic behaviour of its solutions. Bertoin and Watson ($2018$) developed a probabilistic approach relying on the Feynman-Kac formula, that enabled them to answer to this question for sublinear growth rates. This assumption on the growth ensures that microscopic particles remain microscopic. In this work, we go further in the analysis, assuming bounded fragmentations and allowing arbitrarily small particles to reach macroscopic mass in finite time. We establish necessary and sufficient conditions on the coefficients of the equation that ensure Malthusian behaviour with exponential speed of convergence to the asymptotic profile. Furthermore, we provide an explicit expression of the latter.
\end{abstract}

\vspace{0.4 cm}

\footnotesize\textit{Keywords:} Growth-fragmentation equation, transport equations, cell division equations, one parameter semigroups, spectral analysis, Malthus exponent, Feynman-Kac formula, piecewise deterministic Markov processes

\textit{Classification MSC:} 34K08 , 35Q92, 47D06, 47G20, 45K05, 60G51, 	60J99  

\normalsize
	
\section{Introduction} 
Imagine a population of individuals that grow and reproduce as time proceeds, in such a way that the evolution of each individual is independent from the others. 

The \textit{growth-fragmentation equation} is the key equation that has been used in the field of structured population dynamics to model such systems. It was first introduced to describe cells dividing by fission \cite{BA67} and, sequently, it has also been used to model neuron networks \cite{KPS13}, polymerization \cite{CLODLMP09,PPS13}, the TCP/IP window size protocol for the internet \cite{BMR02} and many other systems sharing the dynamics described above. The common point is that the ``particles'' under concern (cells, polymers, dusts, etc.) are well-characterized by their mass (or ``size''), \textit{i.e.}, a one-dimensional quantity that grows over time at a certain rate (depending on the mass) and that is distributed among the offspring when a dislocation event occurs. In this work, we do not  assume conservation of mass at dislocation events. This means that some of the mass may be lost or gained during a dislocation.

The main quantity of interest is the concentration of particles of mass $x > 0$ at time $t \geq 0$, denoted by $u_t(x)$. The growth-fragmentation equation describes the evolution of $u_t(x)$ and can be obtained either by a mass balance, in a similar way as for fluid dynamics \cite{BSC11,MD86}, or by considering the Kolmogorov equation for the underlying jump process  \cite{CLOEZ17,DHNR15}:
\begin{equation}
\label{gfe}
\begin{cases}
\partial_t u_t(x) + \partial_x (\tau(x) u_t(x)) + B(x)u_t(x) =  \int_x^{\infty} B(y) k(y, x)u_t(y) dy, \quad \quad x > 0 \\
u_t(0)=0, \\
u_0 (x)  \quad \text{prescribed}.
\end{cases}
\end{equation}

Here, the \textit{growth rate} 
\begin{equation}
\label{tauLipschitz}
\tau : [0,\infty) \to (0,\infty) \quad \text{is a continuously differentiable function},
\end{equation}
the \textit{fragmentation rate}
\begin{equation}
\label{Bcontinuousbounded}
B : [0,\infty) \to [0,\infty) \quad \text{is continuous and bounded},
\end{equation}
and the \textit{fragmentation kernel} $k(x, y): [0, \infty) \times [0, \infty) \to [0, \infty) $ is such that 
\begin{equation}
\label{kcontinuous}
\text{the map} \quad \begin{cases}
(0, \infty) \to L^1(dy) \\
\quad x \quad  \mapsto \; k(x,y)
 \end{cases}
\quad  \text{is contintuous, with} \quad k(x,y) = 0 \quad \forall y \geq x.
\end{equation}
Moreover, we define the function
\begin{equation}
\label{equationnumberofchildren}
N(x) \coloneqq \int_0^x k(x, y) dy,
\end{equation}
and we assume that
\begin{equation}
\label{Nconitnuousbounded}
N : [0,\infty) \to [1,\infty) \quad \text{is continuous and bounded}.
\end{equation}
In words, particles of size $x>0$ grow with speed $\tau(x)$ and divide with division rate $B(x)$. When a particle of size $x$ splits, it produces an average of $N(x)$ smaller particles and $B(x)k(x,y)$ is the rate of birth of a particle having size $y$ from a particle with size $x$. 

In this work, we rather deal with the weak form of the growth-fragmentation equation \eqref{gfe}, that is
\begin{equation}
\label{wgfe}
\frac{d}{dt} \langle \mu_t, f \rangle = \langle \mu_t, \mc A f \rangle.
\end{equation}
Here, $\mu_t(dx) \coloneqq u_t(x) dx$, the function $f$ is smooth with compact support and $\langle \mu , g \rangle$ denotes $\int g(x) \mu(dx)$ for any measure $\mu$ and any function $g$, whenever it makes sense.
The operator $\mc A$, called \textit{growth-fragmentation operator}, has the form
\begin{equation}
\label{definitiongfoperator}
\A f(x) = \tau(x)f'(x) + B(x) \int_0^x f(y) k(x,y) dy - B(x) f(x),
\end{equation}
and it is defined on some domain $\mc D_{\mc A}$ of smooth functions, which will be made explicit in Section \ref{Section3}. 
Proper assumptions on the coefficients $\tau$, $B$ and $k$, specified in Section \ref{Section3}, guarantee that $\mc A$ is the infinitesimal generator of a unique strongly continuous positive semigroup $(T_t)_{t \geq 0}$. In this case, \eqref{wgfe} has a unique solution, given by
\begin{equation}
\label{weaksolutiongfe}
\langle u_t, f \rangle = \langle u_0, T_tf \rangle.
\end{equation}

Note that the weak form \eqref{wgfe} enables to extend the analysis to cases where the concentration of particles is not absolutely continuous w.r.t.\ the Lebesgue measure. In particular, we are able to treat initial conditions of Dirac type. In this setting, for all $x > 0$, the measure\footnote{Note that it exists and is unique thanks to the Riesz-Markov representation theorem.} $\mu_t (x, dy)$ on $\zeroinf$ such that
\begin{equation}
T_t f(x) = \int_{\zeroinf} f(y) \mu_t (x, dy) = \langle \mu_t (x, \cdot), f \rangle,
\end{equation}
describes the concentration at time $t$ of individuals of mass $y$ when one starts at time $0$ from a unit concentration of individuals of mass $x$, \textit{i.e.}\ $\mu_0(x, dy) = \delta_x (dy)$. 

In general, one cannot expect to have an explicit expression for the growth-fragmentation semigroup $(T_t)_{t \geq 0}$ and, motivated by several applications in mathematical modelling, many works are concerned with its behaviour for large times. Typically\footnote{This evidence has been supported by many empirical results, see for example \cite{SMPT05}.}, one expects that, under proper assumptions on the growth and fragmentation rates, there exist $\rho \in \mathbb{R}$, a Radon measure $\nu(dx)$, usually called \textit{asymptotic profile}, and a positive function $\h$ such that
\begin{equation}
\label{convergenceofthesemigroup}
\lim_{t \to \infty} e^{-\rho t} T_t f(x) = h(x) \langle \nu, f \rangle, \quad x > 0,
\end{equation}
at least for every continuous and compactly supported function $f: (0, \infty) \to \mathbb{R}$.
In the literature, the above convergence is often referred to as \textit{Malthusian behaviour}. When it holds, a further important question concerns the speed of convergence. To understand why, consider for example the case in which \eqref{convergenceofthesemigroup} holds with $\rho > 0$. This would imply that the concentration of particles grows exponentially in $t$, albeit, in reality, due to several effects such as the scarcity of space and resources, an indefinite exponential growth is not possible. As a consequence, the growth fragmentation equation is reliable only for rather early stages of the evolution of the population, and the exponent $\rho$ and the asymptotic profile are meaningful only when $e^{-\rho t} T_t $ converges to the asymptotic profile fast enough. Thus, one wishes to establish the so-called \textit{exponential convergence}, \textit{i.e.},
\begin{equation}
\label{definitionexponentialconvergence}
e^{-\rho t} T_t f(x) = h(x) \langle \nu, f \rangle + o (e^{-\beta t}),  \quad x>0,
\end{equation}
for some $\beta > 0$.

The tool that has been mostly used in the literature to investigate \eqref{convergenceofthesemigroup} is the spectral theory of semigroups and operators. The cornerstone of this approach consists in proving the existence of a solution to the so-called \textit{eigenvalue problem} for $\mc A$, namely a triplet $(\rho, \h, \nu)$ that satisfies
\begin{equation}
\label{eigenelements}
\mc A h = \rho h, \quad \mc A^* \nu = \rho \nu, \quad \text{and} \quad  \langle \nu, h \rangle = 1,
\end{equation} 
with $\mc A^*$ being the dual operator of $\mc A$, $\rho$ the leading eigenvalue of $\mc A$ and $\mc A^*$, $\nu$ a Radon measure and $\h$ a positive function. Proper assumptions on the growth and fragmentation rates that ensure existence and uniqueness of a solution to the eigenvalue problem have been established by several authors, for example Mischler and Scher \cite{MS16}, Doumic and Gabriel \cite{DG10} and Michel \cite{M06}.

Once \eqref{eigenelements} is proved, several techniques can be used to derive \eqref{convergenceofthesemigroup}. For instance, Cáceres at al. \cite{CCM11} used dissipation of entropy and entropy inequalities methods to prove convergence to an asymptotic profile for constant and linear growth rate, while Perthame \cite{PERTHAME07} and Michel et al. \cite{MMP05} relied on the general relative entropy method. Mischler and Scher \cite{MS16} developed a splitting technique that allows to formulate a Krein-Rutman theorem. They also provided a punctual survey on the spectral analysis of semigroups. Finally, exponential rate of convergence is essentially equivalent to the existence of a spectral gap \cite{PR05, LP09, CCM11, MS16}.

Doumic and Escobedo \cite{DE16} and Bertoin and Watson \cite{BW16} used the Mellin transform to analyse the so-called \textit{critical case}, where the strategy outlined above cannot be applied as there is no solution to the eigenvalue problem \eqref{eigenelements}. Indeed, in this case, even though it is possible to find positive eigenelements for the growth-fragmentation operator, the integrability condition $\langle \nu, h \rangle = 1$ is not satisfied and \eqref{eigenelements} fails. 

In more recent years, the growth-fragmentation equation \eqref{gfe} have been studied with probabilistic methods. For instance, some authors, including Bardet et al.\ \cite{BCGMZ13}, Bouguet \cite{BOUGUET18} and Chafaï et al.\ \cite{CMP10}, relied on probabilistic techniques to study the conservative version of \eqref{gfe}, in which the total mass of the system is conserved. 

Bertoin and Watson \cite{BERT18,BW18} developed a probabilistic approach to \eqref{convergenceofthesemigroup}, relying on a Feynman-Kac representation of the growth-fragmentation semigroup, that circumvents the spectral theory of semigroups. They could establish necessary \cite{BW18} and sufficient \cite{BERT18} conditions for the Malthusian behaviour with exponential speed of convergence when the growth rate is continuous and sublinear, \textit{i.e.}, $\sup_{x >0} \tau(x)/x < \infty$. With a similar approach, Cavalli \cite{BC19} obtained necessary and sufficient assumptions for exponential convergence in the case of homogeneous fragmentations (the rate at which particles split not depend on the size) and piecewise-linear growth rate. One of the main benefits of this approach is that it also provides a probabilistic representation of the quantities of interest (asymptotic profile, exponent $\rho$, etc.). 

A common point in the cases studied by Bertoin, Watson and Cavalli, is that microscopic particles remain microscopic. More precisely, the time after which a particle of infinitesimal mass growing at speed $\tau$ reaches a fixed mass (say $1$ for the sake of simplicity), namely

\begin{equation}
\label{definitionofT}
\T \coloneqq \int_0^{1} \frac{dx}{\tau(x)},
\end{equation} 

is infinite. In this work, on the contrary, we focus on
\begin{equation}
    \label{Tfinite}
    T < \infty,
\end{equation}
\textit{i.e.}, particles with arbitrarily small masses may become macroscopic after a bounded time. 
We further assume that particles with finite mass cannot reach infinite mass in finite time, \textit{i.e.},
\begin{equation}
\label{Timetoinfinity}
\int_1^{\infty} \frac{dx}{\tau(x)} = \infty.
\end{equation}
We stress that, unlike the case in \cite{BERT18,BW18, BC19}, in our model it is crucial to assume bounded fragmentations.

Just as in \cite{BERT18, BW18}, our analysis relies on a Feynman-Kac representation of the semigroup $(T_t)_{t \geq 0}$ in terms of an instrumental Markov process $X=(X_t)_{t \geq 0}$. Its infinitesimal generator is
\begin{align}
\label{definitionofg}
\G f (x) & =  \tau(x)f'(x) + B(x) \int_0^x \left(f(y)- f(x) \right)  k(x,y) dy,
\end{align}
and it is closely related to the growth-fragmentation operator $\mc A$. However, the Markov process we rely on is different from the one used in \cite{BERT18, BW18}, letting us treat different situations. In their case, in fact, the dynamics of $X$ can be seen as the dynamics of the mass of a distinguished individual in the population, such that, at every dislocation event, the distinguished daughter is chosen among the siblings by size-biased sampling. In particular, their process jumps at the same rate as the one at which the individuals of the population reproduce. In our case, the process $X$ jumps at rate $BN$, while the particles in the system reproduce at rate $B$. Thus, $X$ cannot be seen as a ``well-chosen'' particle in the system. 

From \eqref{definitionofg}, we see that the trajectory $t \mapsto X_t$ is driven by the deterministic flow velocity $\tau$ between consecutive jumps and that the jumps are the only source of randomness\footnote{In this case we say that $X$ is piecewise deterministic, see \cite{D84} for a complete introduction.}. Assumptions \eqref{Bcontinuousbounded} and \eqref{Nconitnuousbounded} guarantee that the total jump rate of $X$ is bounded, so the jumps never accumulate. In the rest of the work, we assume that, for every $x>0$, there exists $\alpha < x < \beta$ with
\begin{equation}
\label{conditionequivalenttoirreducibility}
 \int_{\alpha}^x k(\beta, y) dy >0,
\end{equation}
which is equivalent to the irreducibility of the process $X$ in $\zeroinf$, as it is shown in Section \ref{Section3}. Comparing \eqref{definitiongfoperator} and \eqref{definitionofg}, we get the Feynman-Kac representation\footnote{We refer to Section \ref{Section3} for a rigorous proof.}
\begin{equation}
\label{FKrep}
T_t f(x) = \E_x \left( \mc{E}_t f(X_t) \right), \quad \quad t \geq 0, \quad x\geq0,
\end{equation}
with
\begin{equation}
\label{definitionofE}
\mc{E}_t \coloneqq \exp \left( \int_0^t  B(X_s) \left(N(X_s) - 1 \right) ds \right), \quad \quad t \geq 0,
\end{equation}
where $\p_x$ (resp.\ $\E_x$) is the probability measure (resp.\ the expectation) when the process $X$ is conditioned to start at $X_0=x$.

Even though \eqref{FKrep} is not quite explicit in general, it is of great help to study the behaviour of $T_t$ as $t \to \infty$. A fundamental role is played by the function
\begin{equation}
\label{Lxy}
L_{x,y} (q) \coloneqq \E_x \left(e^{-qH(y)} \mc E_{H(y)}, \; H(y) < \infty  \right), \quad \quad q \in \mb R, \quad x,y \in \zeroinf,
\end{equation}
where $H(y)$ denotes the first hitting time of $y$ by $X$. An important property of $L$ (we refer to Section \ref{Section2} for an extensive analysis) is that it is non-increasing and convex. This allows to fix $x_0>0$ and define the \textit{Malthus exponent} 
\begin{equation}
\label{definitionoflambda}
\lambda \coloneqq \inf \{ q \in \mathbb{R} \; : \; \Lxz (q) <1 \}.
\end{equation}

\subsection*{Main results}
The main contribution of the present work is to provide sufficient conditions in terms of the coefficients $\tau$, $B$ and $k$ that ensure exponentially fast convergence of $e^{-\lambda t}T_t$ to an asymptotic profile. Moreover, we also give an explicit expression of the latter.

\begin{thm}
\label{Theorem2}
Assume \eqref{tauLipschitz}, \eqref{Bcontinuousbounded}, \eqref{kcontinuous}, \eqref{Nconitnuousbounded}, \eqref{Timetoinfinity}, \eqref{conditionequivalenttoirreducibility} and the forthcoming \eqref{vagueconvergenceofthekernel}. If the Malthus exponent $\lambda$ and the rates $B$ and $N$ satisfy 
\begin{equation}
\label{conditionlimsupatinfinity}
\limsup_{x \to \infty} B(x)(N(x)-1) < \lambda,
\end{equation}
then the Malthusian behaviour with exponential convergence \eqref{definitionexponentialconvergence} holds, with $\rho= \lambda$,
\begin{equation}
\label{definitionofh}
\h(x) \coloneqq L_{x, x_0}(\lambda), \quad x > 0,
\end{equation}
and
\begin{equation}
\label{definitionofnu}
\nu(dx)\coloneqq \frac{dx}{\h(x) \tau(x) |L'_{x,x}(\lambda)|}, \quad x > 0.
\end{equation}
\end{thm}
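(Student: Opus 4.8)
The strategy is to follow the probabilistic blueprint of Bertoin--Watson, exploiting the Feynman--Kac representation \eqref{FKrep} and the renewal structure encoded by the hitting-time functional $L_{x,y}$. The key observation is that, by the strong Markov property at the hitting time $H(x_0)$ of the reference level $x_0$, one can write
\begin{equation}
\label{planrenewal}
e^{-\lambda t} T_t f(x) = \E_x\!\left( e^{-\lambda H(x_0)} \mc E_{H(x_0)} \, e^{-\lambda (t - H(x_0))} T_{t-H(x_0)} f(x_0) \,,\; H(x_0) < t \right) + (\text{error}),
\end{equation}
where the error term accounts for trajectories that have not yet reached $x_0$ by time $t$; under \eqref{Timetoinfinity} and irreducibility this error decays (and condition \eqref{conditionlimsupatinfinity} is what forces it to decay \emph{exponentially}, since it controls the Feynman--Kac weight accrued while the process sits at large masses). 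Thus the problem reduces to the case $x = x_0$: one must show that $e^{-\lambda t} T_t f(x_0) \to c \langle \nu, f\rangle$ exponentially fast for a suitable constant, and then \eqref{planrenewal} propagates this to all $x$ with the prefactor $\E_x(e^{-\lambda H(x_0)}\mc E_{H(x_0)}) = L_{x,x_0}(\lambda) = h(x)$, which is exactly \eqref{definitionofh}.

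For the $x = x_0$ case the natural tool is a \emph{change of measure}. Using $h$ and $\lambda$, define the $h$-transform $\pt_{x_0}$ under which the process becomes a genuine (sub-)Markov process on $\zeroinf$; the normalization $\Lxz(\lambda) = 1$ (which holds by continuity/convexity of $q \mapsto \Lxz(q)$ and the definition \eqref{definitionoflambda}, provided the infimum is attained --- a point to verify) makes the excursions of $X$ away from $x_0$ into an honest renewal process under $\pt_{x_0}$. Then $e^{-\lambda t} T_t f(x_0)$ is rewritten as $\h(x_0) \Et_{x_0}(\,\cdot\,)$ of an additive functional over these excursions, and the claimed limit follows from the renewal theorem (or its quantitative, exponentially-fast version, given the spectral gap that \eqref{conditionlimsupatinfinity} provides). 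The invariant measure of the excursion process, expressed via occupation densities, is precisely $\nu(dx) = dx/(h(x)\tau(x)|L'_{x,x}(\lambda)|)$: the factor $1/(\tau(x))$ is the occupation density of the deterministic flow, $1/|L'_{x,x}(\lambda)|$ is the mean return time to level $x$ (an elementary computation with $L_{x,x}$), and $1/h(x)$ undoes the $h$-transform. One should check $\langle \nu, h\rangle < \infty$ (indeed finiteness, the Malthusian as opposed to critical regime, is where \eqref{conditionlimsupatinfinity} enters decisively) and normalize.

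\textbf{Main obstacle.} The delicate point is \emph{not} the abstract renewal argument but establishing the \emph{exponential} rate uniformly, and in particular controlling the contribution of arbitrarily small and arbitrarily large masses. Near $0$, since $T < \infty$, a particle started at infinitesimal mass reaches $x_0$ in bounded time, so there is no difficulty from small masses \emph{per se}; the subtlety is that the process can \emph{return} to arbitrarily small masses only via fragmentation, and one must show the excursion lengths have exponential moments --- this uses boundedness of $B$ and the assumption \eqref{vagueconvergenceofthekernel} on the kernel at small scales. Near $\infty$, the Feynman--Kac weight $\mc E_t$ grows like $\exp(\int_0^t B(X_s)(N(X_s)-1)\,ds)$, and \eqref{conditionlimsupatinfinity} guarantees this exponent is strictly below $\lambda$ for large $x$, so excursions to high masses are exponentially penalized relative to the $e^{\lambda t}$ normalization; making this quantitative --- i.e., producing the gap $\beta > 0$ in \eqref{definitionexponentialconvergence} --- requires a careful estimate of $L_{x,x_0}(q)$ for $q$ slightly below $\lambda$, showing $\Lxz(q) < \infty$ on a neighbourhood to the left of $\lambda$, which is the real analytic heart of the argument and presumably where the bulk of Section~\ref{Section2}'s estimates are invoked.
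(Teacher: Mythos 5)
Your overall architecture is the right one and, in fact, coincides with the paper's: exponential convergence is delegated to the $h$-transform/renewal machinery (Theorem \ref{Theorem1} together with Remark \ref{remarkexpconvergence}), and everything hinges on establishing condition \eqref{Lbiggerthan1}, namely that $L_{x,x}(q)<\infty$ for some $q$ strictly below $\lambda$ and some base point $x>0$ (this is what simultaneously yields $L_{x,x}(\lambda)=1$ --- the normalization you flag as ``a point to verify'' and never verify, since \eqref{Lequal1} is \emph{not} automatic from convexity --- and $L'_{x,x}(\lambda)>-\infty$, and upgrades recurrence of the tilted process to exponential recurrence). But this is precisely the step you do not prove: you correctly identify it as ``the real analytic heart'' and then defer it to unspecified estimates. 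That is a genuine gap, because proving \eqref{Lbiggerthan1} from \eqref{conditionlimsupatinfinity} is the entire content of the proof of Theorem \ref{Theorem2}; neither your renewal decomposition at $H(x_0)$ nor the heuristic that \eqref{conditionlimsupatinfinity} ``exponentially penalizes'' high excursions produces, by itself, the finiteness of $L_{x_0,x_0}(q)$ to the left of $\lambda$.

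Concretely, three ingredients are missing. First, on a good compact interval one needs the Krein--Rutman analysis of the killed process (Proposition \ref{kreinrutman}), which supplies an eigenvalue $\roab<\lambda$ and a martingale giving $\E_{x}\left[\mc E_t e^{-qt},\, t<\sab\right]\le C e^{-(q-\roab)t}$ for $q>\roab$; without this spectral input one cannot integrate in $t$ the compensator of the jump below the lower boundary and so cannot bound the portion of an excursion spent inside the compact. Second, the control near $0$ does not come from \eqref{vagueconvergenceofthekernel} ``at small scales'' (that assumption concerns large $x$); it comes from dominating the number of jumps before the process re-enters from $0$ by a geometric variable, using $T<\infty$ from \eqref{Tfinite} and the boundedness of $BN$ (the choice \eqref{conditionsceltadiaprime}, as in the proof of Lemma \ref{Lemmahcontinuous}), which yields $\sup_{x\le a'}\E_x\left[\mc E_{H(a')}e^{-qH(a')},H(a')<\infty\right]<\infty$. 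Third, hypothesis \eqref{conditionlimsupatinfinity} enters only at the last step, to choose $b'$ with $\sup_{[b',\infty)}B(N-1)<\roab<q$, so that $\mc E_te^{-qt}\le1$ along the stretch of path above $b'$. Splicing these three regimes along an excursion from $b'$ is what gives $L_{b',b'}(q)\in(1,\infty)$ for some $q<\lambda$; your proposal would need to supply this assembly to be complete.
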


It is further interesting to discuss the criterion  \eqref{conditionlimsupatinfinity}. On one hand, it may seem a bit surprising, as it is often assumed in the literature that fragmentations of big particles should be strong enough to counterbalance the growth. However, an heuristic interpretation can be given by making a comparison with branching processes. Consider a system in which particles die with rate $B$ and, when a particle of size $x$ dies, it is replaced by an average of $N(x)$ particles. The quantity $B(x) \left( N(x) -1 \right)$ can be seen as the average ``increase'' in the number of particles of the system that arises from the death of a particle of size $x$, whilst the Malthus exponent $\lambda$ represents the long time increase in the number of particles of the system. Condition \eqref{conditionlimsupatinfinity} says that, when particles are large enough, they mostly produce a number of particles that is smaller than the average. So, roughly speaking, the main contribution to the evolution of $\mu_t$ comes from particles that stay in some compact subset of $[0, \infty)$. Condition \eqref{conditionlimsupatinfinity} then does not come as a surprise, as it is well known that compactness plays a key role in establishing Malthusian behaviour, for example in the Krein-Rutman setting.

Condition \eqref{conditionlimsupatinfinity} may still seem unsatisfactory, since it depends not only on the coefficients, but also on the Malthus exponent $\lambda$. However, in many cases, it can be made much more explicit. In particular, if $X$ is recurrent and $B,N$ are not constant, then $ \inf_{x>0} B(x) \left( N(x)- 1 \right) < \lambda$ (see Proposition \ref{Propositionsimpleboundsonlambda}). Thus, when $X$ is recurrent, \eqref{conditionlimsupatinfinity} is surely fulfilled if
\begin{equation}
\label{conditionwhenrecurrent}
\lim_{x \to \infty} B(x)(N(x)-1)  = \inf_{x>0} B(x)(N(x)-1).
\end{equation}
This enables to find explicit conditions for the Malthusian behaviour in the important case when the fragmentation kernel is self-similar, \textit{i.e.},
\begin{equation}
\label{selfsimilarfragmentationkernel}
k(x,y)= \frac{1}{x} k_0 \left(\frac{y}{x}\right), \quad 0<y<x,
\end{equation}
where $k_0 \in L^1([0,1])$. For all $r \in \mathbb{R}$, we define
\begin{equation}
M_r \coloneqq \int_0^1 z^r k_0 (z) dz.
\end{equation}
It is easy to check that, in this case, $N(x) = M_{_0}$ for all $x>0$. 

\begin{thm}
\label{Theorem3}
Assume \eqref{tauLipschitz}, \eqref{Bcontinuousbounded}, \eqref{kcontinuous} and \eqref{Timetoinfinity}. Assume further that \eqref{conditionwhenrecurrent} holds, that
\begin{equation}
\label{assmoments} 
\text{there exist} \; a,b>0 \; \text{such that} \;  M_a < M_{_0} \; \text{and} \; M_{-b} < \infty,
\end{equation}
and that there exists $x_\infty > 0$ such that, for all $x \geq x_{\infty}$,
\begin{equation}
\label{conditionrecurrencessinf}
\frac{\tau(x)}{xB(x)} \leq \frac{1}{a} \left(M_{_0} - M_a \right).
\end{equation} Then, the Malthusian behaviour with exponential speed of convergence \eqref{definitionexponentialconvergence} holds with $\rho = \lambda$ and $\h$ and $\nu$ as in Theorem \ref{Theorem2}.
\end{thm}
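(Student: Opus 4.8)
The plan is to derive Theorem~\ref{Theorem3} from Theorem~\ref{Theorem2}, by checking that in the self-similar setting \eqref{selfsimilarfragmentationkernel}, and under the extra hypotheses \eqref{conditionwhenrecurrent}, \eqref{assmoments} and \eqref{conditionrecurrencessinf}, all the assumptions of Theorem~\ref{Theorem2} are in force. Most of them are immediate: \eqref{tauLipschitz}, \eqref{Bcontinuousbounded}, \eqref{kcontinuous} and \eqref{Timetoinfinity} are assumed; since $N\equiv M_{_0}$ is constant, \eqref{Nconitnuousbounded} holds; the map $x\mapsto k(x,\cdot)=x^{-1}k_0(\cdot/x)$ is $L^1(dy)$-continuous; and the forthcoming condition \eqref{vagueconvergenceofthekernel} holds because, in the relative variable $y/x$, the jump law of the instrumental process $X$ is the same at every $x$, so there is no genuine limit to take at infinity. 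For the irreducibility condition \eqref{conditionequivalenttoirreducibility}, note that $M_{_0}=\int_0^1 k_0>0$ provides an interval $(u,v)\subseteq(0,1)$ with $\int_u^v k_0(z)\,dz>0$; given $x>0$, taking $\beta=x/v$ and $\alpha=ux/v$ gives $\alpha<x<\beta$ and, after the substitution $z=y/\beta$,
\begin{equation*}
\int_\alpha^x k(\beta,y)\,dy=\int_\alpha^x\frac1\beta k_0\!\Big(\frac y\beta\Big)\,dy=\int_u^v k_0(z)\,dz>0 .
\end{equation*}

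Thus the only substantial point is the key criterion \eqref{conditionlimsupatinfinity}. By the discussion following Theorem~\ref{Theorem2}, resting on Proposition~\ref{Propositionsimpleboundsonlambda}, as soon as $X$ is recurrent the assumption \eqref{conditionwhenrecurrent} forces
\begin{equation*}
\limsup_{x\to\infty}B(x)(N(x)-1)=\inf_{x>0}B(x)(N(x)-1)<\lambda ,
\end{equation*}
which is exactly \eqref{conditionlimsupatinfinity}. So the whole argument reduces to showing that \eqref{assmoments} and \eqref{conditionrecurrencessinf} make $X$ \emph{recurrent} on $\zeroinf$, and this is the heart of the proof.

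To establish recurrence I would run a Foster--Lyapunov argument for the piecewise-deterministic process $X$ with generator $\G$ from \eqref{definitionofg}. Self-similarity gives $\int_0^x y^a k(x,y)\,dy=x^a M_a$ and $\int_0^x x^a k(x,y)\,dy=x^a M_{_0}$, so for $V(x)=x^a$ with $a$ as in \eqref{assmoments},
\begin{equation*}
\G V(x)=a\,\tau(x)x^{a-1}+B(x)x^a\big(M_a-M_{_0}\big)=x^a\left(\frac{a\,\tau(x)}{x}-B(x)\big(M_{_0}-M_a\big)\right),
\end{equation*}
and \eqref{conditionrecurrencessinf} is precisely what makes $\G V(x)\le 0$ for $x\ge x_\infty$ (note \eqref{conditionrecurrencessinf} also forces $B(x)>0$ there); this controls the excursions of $X$ towards $+\infty$. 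To control the behaviour near the origin I would use $W(x)=x^{-b}$, the computation being legitimate because $M_{-b}<\infty$ by \eqref{assmoments}: the same manipulation gives $\G W(x)=x^{-b}\big(-b\tau(x)/x+B(x)(M_{-b}-M_{_0})\big)$, which is negative for all $x$ small enough, since $\tau(x)/x\to\infty$ as $x\to0^+$ (because $\tau(0)>0$) while $B(x)(M_{-b}-M_{_0})$ stays bounded. Combining these two one-sided drift estimates with the irreducibility established above, and with the recurrence criterion for $X$ from Section~\ref{Section2}, I would conclude that $X$ is recurrent; then \eqref{conditionlimsupatinfinity} holds, Theorem~\ref{Theorem2} applies, and the Malthusian behaviour with exponential speed of convergence \eqref{definitionexponentialconvergence} follows with $\rho=\lambda$ and $h,\nu$ as in \eqref{definitionofh}--\eqref{definitionofnu}.

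The hard part is exactly this recurrence step. The functions $x^a$ and $x^{-b}$ do not belong to the natural (compactly supported) domain of $\G$, so the formal drift identities above have to be justified rigorously, e.g.\ by truncation and optional stopping, turning $V(X_t)$ and $W(X_t)$ into genuine supermartingales outside a compact subset of $\zeroinf$; one then has to patch the control near $0$ and the control near $+\infty$ into a single recurrence statement, and to make sure the notion of recurrence thus obtained is the one feeding into the long-time analysis. I expect the cleanest route is to phrase recurrence through the hitting functionals $L_{x,y}$ of Section~\ref{Section2} and to bound those functionals using the Lyapunov estimates, rather than to quote a ready-made PDMP recurrence theorem.
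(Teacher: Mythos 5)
Your proposal is correct and follows essentially the same route as the paper: reduce to Theorem \ref{Theorem2} by showing that \eqref{conditionwhenrecurrent} yields \eqref{conditionlimsupatinfinity} once $X$ is recurrent, and establish recurrence via a Foster--Lyapunov argument with $V(x)=x^{a}$ near infinity (where \eqref{conditionrecurrencessinf} gives $\mc G V\le 0$) and $x^{-b}$ near the origin (where the drift condition holds automatically because $\tau(x)/(xB(x))\to\infty$ as $x\to 0^{+}$). Your additional verifications of \eqref{conditionequivalenttoirreducibility} and \eqref{vagueconvergenceofthekernel} in the self-similar case, and your remark on the domain issue for the Lyapunov functions, are sound refinements of details the paper leaves implicit.
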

We stress that condition \eqref{conditionrecurrencessinf} is quite natural and it can be interpreted as a balance between the growth and the fragmentation of large particles.

\subsection*{Related results}
The growth-fragmentation equation with self-similar fragmentation rate has been extensively studied in the literature and it is interesting to compare our results with the previous ones. 

To start, Bouguet \cite{BOUGUET18} investigated positive recurrence for the family of piecewise-deterministic Markov processes that arise in our analysis. Sufficient conditions for positive recurrence are provided in the case in which $\tau$ and $B$ behave as power functions of the size in a neighbourhood of $0$ and $\infty$. In addition to \eqref{conditionrecurrencessinf}, in \cite{BOUGUET18} a balance between growth and fragmentation of very small particles is also assumed (assumption $(2.5)$ in \cite{BOUGUET18}). In our case, this extra condition is instead a direct byproduct of our setting (see the proof of Theorem \ref{Theorem3} for further details).

Doumic Jauffret and Gabriel \cite{DG10} obtained conditions to ensure the existence of eigenelements and Malthusian behaviour (by general entropy method). Their assumption
\begin{equation}
\lim_{x \to \infty} \frac{x B(x)}{\tau(x)} = + \infty,
\end{equation}
clearly implies our condition \eqref{conditionrecurrencessinf}. Furthermore, while they also assume \eqref{Tfinite} for $B(0) >0$, we recall that we don't assume conservation of mass, that is condition $(6)$ in \cite{DG10}.

Bernard and Gabriel \cite{BG17} provided sufficient conditions for the existence of a solution to the eigenvalue problem \eqref{eigenelements}  in the self-similar case, assuming bounded fragmentations. Our condition on the behaviour of $\tau$ at $\infty$ is less restrictive than theirs, as they assume that there exist $ \bar{\alpha} \leq \alpha < 1$ such that
\begin{equation}
c_1 x^{\bar{\alpha}} < \tau(x) < c_2 x^{\alpha}.
\end{equation}
Similarly, for the fragmentation rate, instead of their assumption of 
$B(x)$ constant for large $x$, we require continuity conditions for $B$. 

We also mention \cite{BCGM19}, in which the authors analyse the self-similar case assuming constant growth rate.

Finally, our results should be considered together with the ones obtained by Bertoin \cite{BERT18}, who also analyses the self-similar case (see paragraph $3.5$), but in a complementary framework. In fact, in \cite{BERT18}, \eqref{Tfinite} does not hold and the fragmentations may be unbounded. Condition $(32)$ in \cite{BERT18} is the the same as our condition \eqref{conditionrecurrencessinf}. However, they again  require a balance between growth and fragmentation of very small particles, that is always verified under our assumptions. 

We conclude mentioning that a possible approach to the study of the asymptotic behaviour of the growth-fragmentation equation may be developed with the help of quasi-stationary distributions. We refer to \cite{CV16} and \cite{CV17} for a comprehensive introduction on the topic.

\subsubsection*{Outline of the paper} The article is organised as follows. In Section \ref{Section2} we present some general results on Markov processes with only negative jumps, that will be used in the rest of the work. In Section \ref{Section3} we establish existence and uniqueness of the growth-fragmentation semigroup, as well as its  Feynman-Kac representation. Moreover, we provide a characterization of the Malthusian behaviour in Theorem \ref{Theorem1}. Section \ref{Section4} is devoted to the proofs of Theorem \ref{Theorem2} and Theorem \ref{Theorem3}. Finally, we provide some examples in Section \ref{Section5}.

\section{Background on the instrumental Markov process}
\label{Section2}

In this section we aim to present in a more general setting some of the ideas and techniques used by Bertoin and Watson \cite{BW18}, Bertoin \cite{BERT18} and Cavalli \cite{BC19} to study properties of Markov processes of the type \eqref{definitionofg}. The results obtained will be of great use in the next sections.
For the sake of simplicity, we use here the same notation that was used in the introduction (for instance the notation $X$, $\p_x$, $\E_x$ or $H$), even though we are considering slightly more general processes.

\subsection{Setup}
This section concerns processes with infinitesimal generator of the type \eqref{definitionofg}. However, rather than working with the analytic expression of the generator, it will be more useful for our analysis to focus on the path properties of this kind of processes. 

We consider a Markov process $X$ on $[0, \infty)$ that satisfies the following. The trajectory $t \mapsto X_t$ follows a strictly increasing deterministic flow between consecutive jumps and the jumps are the only source of randomness. The total jump rate remains bounded, so the jumps never accumulate. Denote by $\p_x$ the law of $X$ started at $x \geq 0$, by $\E_x$ the corresponding expectation and let
\begin{equation}
\label{definitionhittingtimes}
\Hit y \coloneqq \inf \{t>0\; : \; X_t =y\},
\end{equation}
be the first hitting time of $y > 0$. We make the following assumptions:

\begin{enumerate}[label=(A\arabic*)]
\item \label{Sec2upwardskip} $X$ has no positive jumps (\text{upward skipfree});
\item \label{Sec2irreducible} $X$ is irreducible in $(0, \infty)$; \textit{i.e.}, $\p_x(H(y) < \infty) > 0$ for all $x, y > 0$;
\item \label{Sec2entranceboundary} $0$ is an \textit{entrance boundary}, \textit{i.e.}, $\p_0(H(x) < \infty) > 0$ and $\p_x(H(0) < \infty) = 0$, for all $x > 0$.
\end{enumerate}
We notice that, since the deterministic flow is strictly increasing and the jumps don't accumulate, we have that
\begin{enumerate}[label=(A\arabic*)]
\setcounter{enumi}{3}
\item \label{Sec2positiverettimes}  return times are almost surely strictly positive in $(0,\infty)$, \text{i.e.}, $\p_x(H(x)>0)=1$ for all $x > 0$.
\end{enumerate}

\begin{oss}
We stress that the properties \ref{Sec2upwardskip}-\ref{Sec2positiverettimes} hold when $X$ has generator given by \eqref{definitionofg} under the assumptions outlined in the Introduction, as we will show in Section \ref{Section3}.
\end{oss}

Let $g: (0, \infty) \to (0, \infty)$ be a measurable and bounded function and define the random functional 
\begin{equation}
\Eg_t = \exp \left(  \int_0^t g(X_s) ds  \right), \quad \quad t \geq 0.
\end{equation}

We aim to construct some martingales and a family of supermartingales connected to the process $X$ and the functional $\Eg_t$. 

\subsection{A Laplace transform}
We start by defining the Laplace transform 
\begin{equation}
L_{x,y}(q) \coloneqq \E_x \left( e^{-q \Hit y} \Eg_{\Hit y} , \Hit y < \infty \right), \quad \quad q \in \mathbb{R}, \; x \geq 0, y >0.
\end{equation}

First of all, \ref{Sec2irreducible} and \ref{Sec2entranceboundary} imply that $\p_x( \Hit y < \infty) > 0$ for all $x \geq 0$ and $y>0$. Furthermore, on the event $\{ \Hit y < \infty \}$, the functional $\Eg_{\Hit y}$ is strictly positive, and so $L_{x,y}(q) \in (0, \infty] $. Straightforward arguments show that the function $\Lxy: \; \mathbb{R} \to (0, \infty] $ is non increasing, convex, and right-continuous at the boundary points of its domain (monotone convergence). Moreover, for every $q > \norm{g}_{\infty}$, we have $e^{-qt}\Eg_t \leq 1$ and, a fortiori, $L_{x,y}(q) <1$. More precisely,  
\begin{equation}
\lim_{q \to - \infty} L_{x,y}(q)  = \infty \quad \text{and} \quad \lim_{q \to + \infty} L_{x,y}(q)  = 0. 
\end{equation}

Thanks to this property, we can fix $x_0 >0$ arbitrarily and define
\begin{equation}
\lambda \coloneqq \inf \{ q \in \mathbb{R} \; : \; \Lxz (q) <1 \}.
\end{equation}
The definition of $\lambda$ does not depend on the choice of $x_0$ (see Proposition $3.1$ in \cite{BW18}). We state some elementary bounds for $\lambda$ in terms of the function $g$. The proof is similar to that of Proposition $3.4$ in \cite{BW18} and details are left to the reader.

\begin{prop}
\label{Propositionsimpleboundsonlambda}
Assume \ref{Sec2upwardskip}-\ref{Sec2positiverettimes}. Then the following hold.
\begin{enumerate}[label=(\roman*)]
\item It always holds that $\lambda \leq \norm{g}_{\infty}$.
\item If $X$ is recurrent, then $\lambda \geq \inf_{x >0} g(x)$ and, in particular, $\lambda \geq 0$. The strict inequality holds except possibly when $g$ is constant. 
\item If $X$ is positive recurrent on $\zeroinf$ with stationary measure $\pi$, then 
\begin{equation}
\lambda \geq \langle \pi, g \rangle.
\end{equation}
\end{enumerate}
\end{prop}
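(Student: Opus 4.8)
The plan is to prove the three bounds on $\lambda$ separately, in each case translating a property of the Laplace transform $L_{x,y}$ at a suitable value of $q$ into an inequality for the infimum defining $\lambda$. Recall that $\lambda = \inf\{q : \Lxz(q) < 1\}$, so to show $\lambda \le c$ it suffices to exhibit some $q \le c$ (or a sequence $q_n \downarrow c$) with $\Lxz(q) < 1$ (using right-continuity and monotonicity), and to show $\lambda \ge c$ it suffices to show $\Lxz(q) \ge 1$ for all $q < c$.

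\textbf{Part (i).} For $q = \norm{g}_\infty$ we have $g(X_s) \le \norm{g}_\infty$ for all $s$, hence $e^{-q\Hit{x_0}}\Eg_{\Hit{x_0}} \le 1$ on $\{\Hit{x_0} < \infty\}$, so $\Lxz(\norm{g}_\infty) \le \p_{x_0}(\Hit{x_0} < \infty) \le 1$. This gives $\Lxz(q) < 1$ for all $q > \norm{g}_\infty$ (since $e^{-qt}\Eg_t$ is then strictly less than $1$ on $\{\Hit{x_0}>0\}$, which has full probability by \ref{Sec2positiverettimes}, and integrating against a measure of total mass $\le 1$), whence $\lambda \le \norm{g}_\infty$.

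\textbf{Parts (ii) and (iii).} Here the idea is to go the other way: for $q < \inf_x g(x)$ we have $e^{-q\Hit{x_0}}\Eg_{\Hit{x_0}} = \exp(\int_0^{\Hit{x_0}}(g(X_s) - q)\,ds) \ge 1$ on $\{\Hit{x_0} < \infty\}$, so $\Lxz(q) \ge \p_{x_0}(\Hit{x_0} < \infty)$. When $X$ is recurrent, $\p_{x_0}(\Hit{x_0} < \infty) = 1$, so $\Lxz(q) \ge 1$ for all such $q$, giving $\lambda \ge \inf_x g(x) \ge 0$ (the last since $g$ is positive-valued). For the strictness claim, if $g$ is non-constant, pick a point $z$ and an interval around it where $g > \inf_x g(x) + \varepsilon$; by irreducibility and the structure of the paths, under $\p_{x_0}$ there is positive probability that the excursion from $x_0$ back to $x_0$ spends a positive amount of time in that region, so $\E_{x_0}(\exp(\int_0^{\Hit{x_0}}(g(X_s)-q)\,ds)) > 1$ even at $q = \inf_x g(x)$, and by right-continuity and monotonicity of $L$ this forces $\Lxz(q) \ge 1$ for $q$ slightly above $\inf_x g(x)$ as well, i.e. $\lambda > \inf_x g(x)$. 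For part (iii), when $X$ is positive recurrent with stationary measure $\pi$, one uses the ergodic theorem: under $\p_{x_0}$, along the sequence of successive return times $\Hit{x_0}^{(n)}$ to $x_0$, the additive functional satisfies $\frac1n \int_0^{\Hit{x_0}^{(n)}} g(X_s)\,ds \to \pi(1)^{-1}\langle\pi,g\rangle \cdot \E_{x_0}(\Hit{x_0})$ appropriately normalized; more directly, for $q < \langle\pi,g\rangle / \E_{x_0}[\Hit{x_0}]$ — or rather, normalizing $\pi$ to a probability measure so that $\langle \pi, g\rangle$ is the time-average rate — the functional $\int_0^{\Hit{x_0}^{(n)}}(g(X_s)-q)\,ds \to +\infty$ a.s., so by Fatou $\Lxz^{(n)}(q)$, the analogous quantity over $n$ excursions, tends to $+\infty$; submultiplicativity of $L$ over successive excursions ($L^{(n)} \ge \Lxz(q)^n$ fails in general, but $\E_{x_0}[e^{-q\Hit{x_0}^{(n)}}\Eg] \ge$ product-type lower bounds via the strong Markov property) then forces $\Lxz(q) \ge 1$.

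The main obstacle I expect is making the positive-recurrent case (iii) rigorous: one needs to relate $\Lxz(q)$, which only "sees" the first excursion from $x_0$, to the long-run time-average rate $\langle\pi,g\rangle$ governed by many excursions. The clean way is via the renewal/ergodic structure: by the strong Markov property at successive hitting times of $x_0$, $\E_{x_0}[e^{-q\Hit{x_0}^{(n)}}\Eg_{\Hit{x_0}^{(n)}}] = \Lxz(q)^n$, while the ratio ergodic theorem gives $\Hit{x_0}^{(n)}/n \to \E_{x_0}[\Hit{x_0}]$ and $\frac1n\int_0^{\Hit{x_0}^{(n)}} g(X_s)\,ds \to \E_{x_0}[\int_0^{\Hit{x_0}}g(X_s)\,ds] = \E_{x_0}[\Hit{x_0}]\,\langle\pi,g\rangle$ (with $\pi$ the stationary probability measure); hence for $q < \langle\pi,g\rangle$ the exponent $\int_0^{\Hit{x_0}^{(n)}}(g(X_s)-q)\,ds \to +\infty$ $\p_{x_0}$-a.s., and Fatou's lemma gives $\liminf_n \Lxz(q)^n = \liminf_n \E_{x_0}[e^{-q\Hit{x_0}^{(n)}}\Eg_{\Hit{x_0}^{(n)}}] = +\infty$, which is impossible unless $\Lxz(q) \ge 1$. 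Since this holds for every $q < \langle\pi,g\rangle$, we conclude $\lambda \ge \langle\pi,g\rangle$. Since the paper explicitly says the proof mirrors Proposition 3.4 of \cite{BW18} and leaves details to the reader, I would keep the write-up brief, citing that reference for the routine parts and only spelling out the ergodic argument for (iii).
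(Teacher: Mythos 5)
Your proof is correct and follows the same route as the argument the paper defers to (Proposition 3.4 of Bertoin--Watson): part (i) from the pointwise bound $e^{-qt}\Eg_t<1$ on $\{0<t\}$ for $q>\norm{g}_{\infty}$ together with \ref{Sec2positiverettimes}, part (ii) from $L_{x_0,x_0}(q)\ge\p_{x_0}(H(x_0)<\infty)=1$ for $q\le\inf_{x>0}g(x)$ plus right-continuity of $L$ at the crossing point, and part (iii) from the exact identity $\E_{x_0}\bigl[e^{-qH^{(n)}}\Eg_{H^{(n)}}\bigr]=L_{x_0,x_0}(q)^n$ over successive returns combined with the strong law for the i.i.d.\ excursions and Fatou's lemma. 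The only point to polish is the strictness claim in (ii): since $g$ is only assumed measurable, the super-level set $\{g>\inf g+\varepsilon\}$ need not contain an interval, which is precisely why the statement hedges with ``except possibly''; in the paper's application $g=B(N-1)$ is continuous, so your interval argument applies verbatim.
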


Due to right continuity, it always holds that $L_{x_0,x_0}(\lambda) \leq 1$. We consider now the following assumption 
\begin{equation}
\label{Lequal1}
    L_{x_0,x_0}(\lambda) = 1,
\end{equation}
and the stronger one
\begin{equation}
\label{Lbiggerthan1}
\text{there exists} \; q \in \mathbb{R} \; \text{with} \; L_{x_0, x_0}(q) \in (1, \infty).
\end{equation}

Notice that condition \eqref{Lbiggerthan1} implies not only \eqref{Lequal1}, but also that
\begin{equation}
\label{Lfinitederivative}
    L'_{x_0, x_0}(\lambda) > - \infty.
\end{equation}

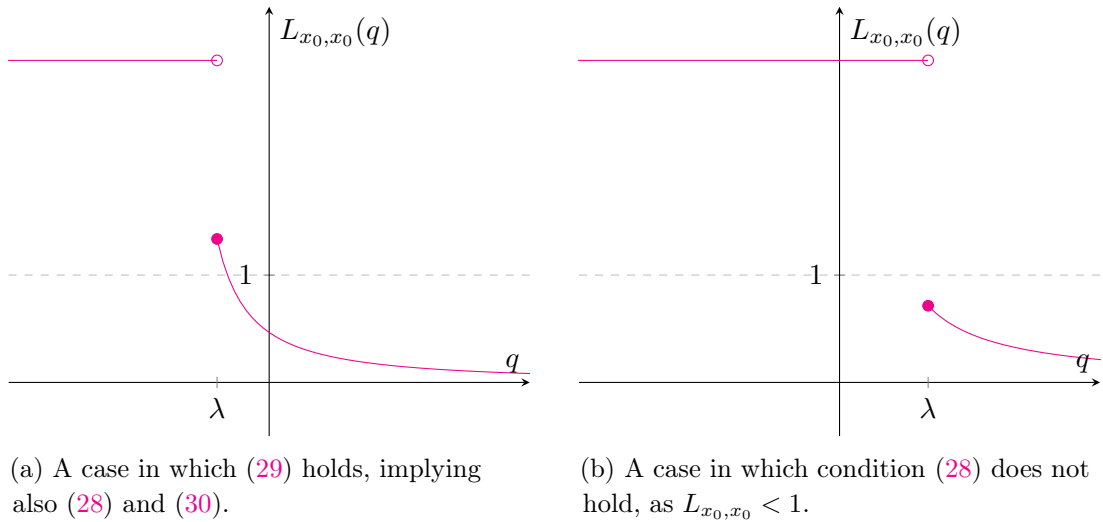
\begin{figure}[h]
\label{Fig1}
\centering
\imagewidth=0.30\textwidth
\captionsetup[subfigure]{width=1.5\imagewidth,justification=raggedright}
\begin{subfigure}{.5\textwidth}
  \centering
  \begin{tikzpicture}
\begin{axis}[
	axis lines = center,
	xlabel={$q$},
	ylabel={$L_{x_0, x_0}(q)$},
    xmin=-5, xmax=5,
    ymin=-0.5, ymax=3.5,
    xtick={0},
    ytick={1},
    extra x ticks={-1},
	extra x tick labels={$\lambda$},
    ymajorgrids=true,
    grid style=dashed,
]
\addplot [
    domain=-1:10, 
    samples=100, 
    color=magenta,
]
{1/(x+1.8)^1.3};
\addplot [
    domain=-10:-1, 
    samples=100, 
    color=magenta,
    ]
    {3};
\addplot[mark=*, magenta] coordinates {(-1, 1.3365)};
\addplot[mark=o, magenta] coordinates {(-1, 3)};
\end{axis}
\end{tikzpicture}
 \captionof{figure}{A case in which \eqref{Lbiggerthan1} holds, implying also \eqref{Lequal1} and \eqref{Lfinitederivative}.}
\end{subfigure}%
\hfill
\begin{subfigure}{.5\textwidth}
  \centering
  \begin{tikzpicture}
\begin{axis}[
	axis lines = center,
	xlabel={$q$},
	ylabel={$L_{x_0, x_0}(q)$},
    xmin=-5, xmax=5,
    ymin=-0.5, ymax=3.5,
    xtick={0},
    ytick={1},
    extra x ticks={1.7},
	extra x tick labels={$\lambda$},
    ymajorgrids=true,
    grid style=dashed,
]
\addplot [
    domain=1.7:10, 
    samples=100, 
    color=magenta,
]
{1/(x-0.3)};
\addplot [
    domain=-10:1.7, 
    samples=100, 
    color=magenta,
    ]
    {3};
\addplot[mark=*, magenta] coordinates {(1.7, 1/1.4)};
 \addplot[mark=o, magenta] coordinates {(1.7, 3)};
\end{axis}
\end{tikzpicture}
  \captionof{figure}{A case in which condition \eqref{Lequal1} does not hold, as $L_{x_0,x_0}<1$.}
\end{subfigure}
\caption{An illustration of two possible behaviours of the function $L_{x_0,x_0}$ in a neighbourhood of the Malthus exponent. The crucial difference between \eqref{Lequal1}, \eqref{Lbiggerthan1} and \eqref{Lfinitederivative} is the behaviour of the function as it crosses the dashed line at level one.}
\end{figure}

\subsection{A remarkable martingale}
Assume that \eqref{Lequal1} holds. Fix $x_0\geq0$ and define the function
\begin{equation}
    \h (x) \coloneqq L_{x,x_0}(\lambda), \quad \quad x \geq 0.
\end{equation}

\begin{lem}
\label{Lemmahcontinuous}
The function $\h$ is continuous in $\zeroinfc$.
\end{lem}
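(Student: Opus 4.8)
The plan is to exhibit $\h$ as the solution of a renewal (Duhamel) equation and then to read continuity off it. Let $\tau$ be the velocity of the (strictly increasing) flow, $(\phi_t(x))_{t\ge0}$ the flow started from $x$, $\kappa$ the bounded total jump rate, and, for $w>0$, let $\Phi(w)$ denote the average of $\h$ against the post-jump distribution of $X$ from level $w$ (in the growth-fragmentation parametrisation, $\Phi(w)=\frac1{N(w)}\int_0^w\h(y)k(w,y)\,dy$). Conditioning on the first jump time $J$, using that between jumps $X$ follows $\phi$ and has no positive jumps — so the post-jump position $Y$ satisfies $Y<\phi_J(x)$ and $L_{Y,x_0}(\lambda)=\h(Y)$ is the correct continuation value — and applying the strong Markov property at $J$, one obtains, after the change of variables $w=\phi_t(x)$ and with $c(x):=\exp\big(\int_x^{x_0}\frac{g(u)-\lambda-\kappa(u)}{\tau(u)}\,du\big)$ (the weighted probability of reaching $x_0$ with no jump, i.e.\ of $\{J\ge\Hit{x_0}\}$): for $0\le x\le x_0$,
\[
\h(x)=c(x)+\int_x^{x_0}\frac{\kappa(w)}{\tau(w)}\,\exp\!\left(\int_x^{w}\frac{g(u)-\lambda-\kappa(u)}{\tau(u)}\,du\right)\Phi(w)\,dw,
\]
while for $x\ge x_0$ the same identity holds with $c(x)$ dropped (then $\{J\ge\Hit{x_0}\}$ is null) and the integral running over $(x,\infty)$. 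The point is that "$0$ is an entrance boundary" (\ref{Sec2entranceboundary}) means precisely that the flow reaches $x_0$ from $0$ in finite time, i.e.\ $\int_0^{x_0}\!du/\tau(u)<\infty$; since $\kappa$ and $g$ are bounded and $\tau$ is continuous and strictly positive, $c$ is finite and continuous on $[0,x_0]$ with $c(x_0)=1$, and the integral above is absolutely convergent even at $x=0$, so both identities hold for every $x\ge0$.

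The main obstacle is the boundedness of $\h$ on compact subsets of $[0,\infty)$, as a priori $L_{x,x_0}(\lambda)$ could equal $+\infty$. On $[x_0,b]$, and on $[a,x_0]$ with $a>0$, this follows from the multiplicative property $L_{x,z}(\lambda)=L_{x,y}(\lambda)\,L_{y,z}(\lambda)$ for $x\le y\le z$ (a consequence of skip-freeness \ref{Sec2upwardskip}) together with $L_{x_0,x_0}(\lambda)=1$ (assumption \eqref{Lequal1}): decomposing the first return to $x_0$ from $x_0$ at the first visit to $x$ gives $1\ge\h(x)\,\E_{x_0}\!\big(e^{-\lambda\sigma}\Eg_{\sigma},\,\sigma<\infty\big)$, where $\sigma$ is that first visit time and the right-hand expectation is strictly positive because $X$ is irreducible (\ref{Sec2irreducible}). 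Controlling $\h$ on a left neighbourhood of $0$ is the delicate case; here one inserts the crude bounds $c\le\bar c$ on $[0,x_0]$, $\frac{\kappa(w)}{\tau(w)}\exp(\int_x^{w}\cdots)\le\bar\Lambda$ on $\{0\le x\le w\le x_0\}$, and $\Phi(w)\le\sup_{[0,w]}\h$ into the renewal equation and runs a Gr\"onwall-type argument in the flow variable, using the boundedness of $\kappa$ and $\tau(0)>0$, to conclude $\sup_{[0,x_0]}\h<\infty$.

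Once local boundedness is established, $\Phi$ is continuous on $(0,\infty)$, since the jump rate and the jump distribution of $X$ depend continuously on the current position (in the growth-fragmentation case this follows from \eqref{kcontinuous} and \eqref{Nconitnuousbounded}, via the estimate $\big|\int_0^{w}\h\,k(w,\cdot)-\int_0^{w'}\h\,k(w',\cdot)\big|\le\norm{\h}_{\infty,[0,w\vee w']}\big(\norm{k(w,\cdot)-k(w',\cdot)}_{1}+\int_{w\wedge w'}^{w\vee w'}k(w,y)\,dy\big)\to0$). Feeding continuity of $\Phi$ back into the renewal equation, continuity of $\h$ on $(0,x_0)$ and on $(x_0,\infty)$ is merely continuity of an integral in the parameter $x$, which enters only through the lower integration limit and an exponential prefactor; on $(x_0,\infty)$ one uses the telescoped identity $\h(x)=\int_x^{x'}\frac{\kappa}{\tau}\,e^{\int_x^{w}(g-\lambda-\kappa)/\tau}\,\Phi+\exp\big(\int_x^{x'}\tfrac{g-\lambda-\kappa}{\tau}\big)\h(x')$, valid for $x<x'$, to avoid any discussion of the tail at $\infty$.

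It remains to match one-sided limits at the two boundary points $x_0$ and $0$. As $x\to x_0^{-}$ one has $c(x)\to c(x_0)=1$ and the correction integral over $[x,x_0]$ tends to $0$; as $x\to x_0^{+}$, the rearranged identity $\h(x)=e^{-\int_{x_0}^{x}(g-\lambda-\kappa)/\tau}\big(\h(x_0)-\int_{x_0}^{x}\frac{\kappa}{\tau}\,e^{\int_{x_0}^{w}(g-\lambda-\kappa)/\tau}\,\Phi\big)$ tends to $\h(x_0)$; since $\h(x_0)=L_{x_0,x_0}(\lambda)=1$ by \eqref{Lequal1}, $\h$ is continuous at $x_0$. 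Finally, as $x\to0^{+}$, the finiteness $\int_0^{x_0}\!du/\tau(u)<\infty$ gives $c(x)\to c(0)$ and forces the integral to converge to its value at $x=0$, so $\h(x)\to L_{0,x_0}(\lambda)=\h(0)$. This establishes continuity on $[0,\infty)$. (If $x_0=0$, then \ref{Sec2entranceboundary} forces $\p_x(\Hit{0}<\infty)=0$ for every $x\ge0$, so $\h\equiv0$ and there is nothing to prove.)
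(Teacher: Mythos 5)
Your route is genuinely different from the paper's. The paper handles continuity on $(0,\infty)$ by invoking the argument of Corollary $4.3$ in \cite{BW18}, and devotes its proof to the boundary point $0$: it writes $\h(0)=L_{0,\varepsilon}(\lambda)\h(\varepsilon)$ and shows $L_{0,\varepsilon}(\lambda)\to1$ by dominating the number of jumps before $H(\varepsilon)$ by a geometric variable whose parameter $p_\varepsilon$ tends to $1$. You instead derive a first-jump renewal identity for $\h$ and read \emph{all} of the continuity (interior, one-sided matching at $x_0$, and at $0$) off it, which makes the argument self-contained. The identity itself is correct: skip-freeness guarantees that $\h(Y)=L_{Y,x_0}(\lambda)$ is the right continuation value after the first jump, the no-jump event is null for $x\ge x_0$ and carries the stated exponential weight for $x\le x_0$, and your observation that the matching at $x_0$ genuinely requires \eqref{Lequal1} (the left limit along the flow is always $1$, while the value is $L_{x_0,x_0}(\lambda)\le1$) is a correct and worthwhile point that the paper leaves implicit.

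The one step that does not close as written is the local boundedness of $\h$ near $0$. Your renewal equation is a fixed-point identity, not a Volterra equation: $\h(x)$ is expressed through $\Phi(w)$ for $w\in(x,x_0)$, and $\Phi(w)$ averages $\h$ over all of $[0,w]$. Inserting the crude bounds $c\le\bar c$, $G(x,w)\le\bar\Lambda$ and $\Phi(w)\le\sup_{[0,w]}\h$ therefore only yields $\sup_{[0,x_0]}\h\le\bar c+\bar\Lambda x_0\,\sup_{[0,x_0]}\h$, which is vacuous unless $\bar\Lambda x_0<1$; there is no triangular structure for Gr\"onwall to exploit. The fix is to localise: you already know $\h(a)<\infty$ for every $a>0$ from the multiplicative property, so telescope the renewal equation between $x$ and a small level $a$ (exactly as you do on $(x_0,\infty)$). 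The relevant kernel mass satisfies $\int_x^aG(x,w)\,dw\le C_a\bigl(1-e^{-\int_x^a \kappa/\tau}\bigr)\le C_a\norm{BN}_\infty s(0,a)$ with $C_a\to1$, and $s(0,a)\to0$ as $a\to0$ because $\tau(0)>0$ and \eqref{Tfinite} holds; for $a$ small enough this mass is below $1/2$ and $\sup_{[0,a]}\h\le 2\bigl(\bar c_a+C\,\h(a)\bigr)<\infty$ follows. This smallness of the probability of jumping before reaching level $a$ is precisely the content of the paper's estimate $p_\varepsilon\to1$, so the missing ingredient is the same one the paper uses --- it just has to be invoked explicitly rather than absorbed into a generic Gr\"onwall constant.
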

\begin{proof}
The function $\h$ is strictly positive in $[0, \infty)$, thanks to \ref{Sec2irreducible} and \ref{Sec2entranceboundary}. The argument in Corollary $4.3$ in \cite{BW18} ensures that $\h$ is continuous in $(0, \infty)$. Finally, we show that $\h$ is also continuous at $0$. Indeed, the Markov property entails that, for all $\varepsilon < x_0$,
\begin{equation}
\h(0) = L_{0, \epsilon}(\lambda) \h(\varepsilon).
\end{equation}
Next, we observe that 
\begin{equation}
\lim_{\varepsilon \to 0+} e^{- \lambda H(\varepsilon)} \Eg_{H(\varepsilon)} \mathbf{1}_{ \{H(\varepsilon) < \infty \}} = 1,
\end{equation}
and so, by Fatou's Lemma,
\begin{equation}
1 \leq \liminf_{\varepsilon \to 0+} \E_0 \left[ e^{- \lambda H(\varepsilon)} \Eg_{H(\varepsilon)} \mathbf{1}_{ \{H(\varepsilon) < \infty \}} \right] = \liminf_{\varepsilon \to 0+} L_{0, \epsilon}(\lambda).
\end{equation}
Let $\Lambda_{\epsilon}$ the event that the deterministic flow starting from $0$ reaches $\varepsilon$ without making any jump. Since the flow is strictly increasing, the jumps are only negative and the total jump rate is bounded, we have that $p_{\varepsilon} \coloneqq \p_0(\Lambda_\varepsilon) \uparrow 1$ when $\varepsilon \to 0^+$. Under this event, the hitting time is deterministic, say $H(\varepsilon) = s (0,\varepsilon)$, with $s(0,\epsilon) \downarrow 0$ when $\varepsilon \to 0^+$. We introduce now a geometric random variable $G(\varepsilon)$, with parameter $p_{\varepsilon}$. The number of jumps of $X$ before reaching $\varepsilon$ is stochastically dominated by $G(\varepsilon)$. Hence, 
\begin{equation}
\Hit \varepsilon \leq s(0,\varepsilon) G(\varepsilon).
\end{equation}
Thus, $\E_0 \left( e^{-\lambda \Hit { \varepsilon}} \mc E_{\Hit { \varepsilon}} , \Hit { \varepsilon} < \infty \right) \leq \E \left( e^{ \delta  G( \varepsilon)} \right)$,
where $$\delta \coloneqq \left( \sup_{x \in [0, \varepsilon)} B(x)(N(x)-1) - \lambda \right)s(0,\varepsilon).$$
All that is left to check is that $G( \varepsilon)$ has finite exponential moment with exponent $\delta$. We know that
\begin{align}
\E \left[ e^{\delta G( \varepsilon)}\right]& = \frac{p_ \varepsilon}{1-p_ \varepsilon} \sum_{k \geq 1} \left( e^{\delta} (1-p_ \varepsilon)\right)^k, 
\end{align}
which is finite if and only if $e^{\delta} (1-p_ \varepsilon) < 1$, \textit{i.e.}, $\delta < - \log (1-p_ \varepsilon)$. This reads, 
\begin{equation}
\sup_{x \in [0, \varepsilon)} B(x)(N(x)-1) < - \frac{\log (1-p_ \varepsilon)}{s(0,\varepsilon)} + \lambda,
\end{equation}
which is clearly true for $\epsilon$ small enough. In this case,
\begin{align}
\E \left[ e^{\delta G( \varepsilon)}\right]& = \frac{1}{1 - e^{\delta} (1-p_ \varepsilon)} \; \to \; 1, 
\end{align}
when $\epsilon \to 0$. This implies that 
\begin{equation}
\lim_{\varepsilon \to 0+} L_{0, \epsilon}(\lambda) =1,
\end{equation}
proving the claim.
\end{proof}

\begin{lem}
\label{LemmaMinageneralsetting}
Assume \ref{Sec2upwardskip}-\ref{Sec2positiverettimes}. If \eqref{Lequal1} holds, the process
\begin{equation}
    \label{DefinitionofthemartingaleMin the generalsettinng}
    \mc M_t = e^{- \lambda t} \h(X_t) \Eg_t, \quad \quad t \geq 0,
\end{equation}
is a $\p_x$-martingale for every $x\geq0$  with respect to the natural filtration $(\mc F_t)_{t \geq 0}$ of $X$.
\end{lem}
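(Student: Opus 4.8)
The plan is to reduce the martingale property to the ``$\lambda$-invariance'' identity
\begin{equation*}
\E_y\big[e^{-\lambda s}\,\Eg_s\,\h(X_s)\big]=\h(y),\qquad y\ge0,\ s\ge0,\tag{$\star$}
\end{equation*}
and then to prove $(\star)$, reducing first to its version at the base point $x_0$, where the hypothesis \eqref{Lequal1} enters decisively. The process $\mc M$ is nonnegative (since $\h>0$ and $\Eg>0$) and $(\mc F_t)$-adapted, so it is enough to check $\E_x[\mc M_{t+s}\mid\mc F_t]=\mc M_t$ for all $s,t\ge0$. Writing $\Eg_{t+s}=\Eg_t\exp\big(\int_t^{t+s}g(X_u)\,du\big)$, the simple Markov property at time $t$ gives $\E_x[\mc M_{t+s}\mid\mc F_t]=e^{-\lambda t}\Eg_t\,\E_{X_t}\big[e^{-\lambda s}\Eg_s\h(X_s)\big]$, so the claim is exactly $(\star)$. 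Taking $s=t$ and $y=x$ in $(\star)$ also yields $\E_x[\mc M_t]=\h(x)$, which is finite by Lemma~\ref{Lemmahcontinuous}, so $\mc M$ is integrable.

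\textbf{From $x_0$ to an arbitrary starting point.} Using $\h=L_{\cdot,x_0}(\lambda)$, that is $\h(z)=\E_z\big[e^{-\lambda H(x_0)}\Eg_{H(x_0)},\,H(x_0)<\infty\big]$, the Markov property at time $s$ and the multiplicativity of $\Eg$ turn the left-hand side of $(\star)$ into $\E_y\big[e^{-\lambda\tau_s}\Eg_{\tau_s},\,\tau_s<\infty\big]$, where $\tau_s:=\inf\{u>s:X_u=x_0\}$ is the first visit to $x_0$ strictly after time $s$. On $\{H(x_0)>s\}$ one has $\tau_s=H(x_0)$, so this event contributes exactly its share of $\h(y)=\E_y\big[e^{-\lambda H(x_0)}\Eg_{H(x_0)},\,H(x_0)<\infty\big]$. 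On $\{H(x_0)\le s\}$ I would invoke the strong Markov property at the stopping time $H(x_0)$, at which $X=x_0$, with the remaining time $s-H(x_0)$ being $\mc F_{H(x_0)}$-measurable; the two remaining contributions then agree provided
\begin{equation*}
\E_{x_0}\big[e^{-\lambda r}\,\Eg_r\,\h(X_r)\big]=1\qquad\text{for all }r\ge0,\tag{$\star\star$}
\end{equation*}
which is $(\star)$ at the base point and is the heart of the matter.

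\textbf{Renewal at $x_0$.} To prove $(\star\star)$, let $0=H_0<H_1<\cdots$ be the successive return times of $X$ to $x_0$ (with $H_n=\infty$ once they are exhausted) and set $Z_n:=e^{-\lambda H_n}\Eg_{H_n}\mathbf 1_{\{H_n<\infty\}}$. Since $X_{H_n}=x_0$ on $\{H_n<\infty\}$, the strong Markov property at $H_n$ and the multiplicativity of $\Eg$ give $\E_{x_0}[Z_{n+1}\mid\mc F_{H_n}]=Z_n\,L_{x_0,x_0}(\lambda)$, which equals $Z_n$ by \eqref{Lequal1}; hence $\E_{x_0}[Z_n]=1$ for every $n$. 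Rewriting the left-hand side of $(\star\star)$ as in the previous paragraph gives $\E_{x_0}[e^{-\lambda r}\Eg_r\h(X_r)]=\E_{x_0}[Z_{N_r+1}]$, where $N_r:=\#\{n\ge1:H_n\le r\}$ is almost surely finite (each return to $x_0$ after the first requires at least one downward jump, and the jumps of $X$ do not accumulate). Decomposing along $\{N_r+1=n\}=\{H_{n-1}\le r<H_n\}$ and applying the strong Markov property at $H_{n-1}$ together with \eqref{Lequal1} yields the telescoping identity $\E_{x_0}\big[Z_n\,\mathbf 1_{\{N_r+1=n\}}\big]=a_{n-1}-a_n$, with $a_n:=\E_{x_0}\big[e^{-\lambda H_n}\Eg_{H_n}\mathbf 1_{\{H_n\le r\}}\big]$ and $a_0=1$; summing over $n\ge1$ then gives $\E_{x_0}[Z_{N_r+1}]=1-\lim_{n\to\infty}a_n$.

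\textbf{The main obstacle, and conclusion.} The remaining point --- which I expect to be the delicate one --- is that $\lim_{n\to\infty}a_n=0$. On $\{H_n\le r\}$ the integrand satisfies $e^{-\lambda H_n}\Eg_{H_n}\le\exp\big((|\lambda|+\norm{g}_\infty)\,r\big)$, a deterministic constant --- this is where the boundedness of $g$ is used in an essential way --- while $\mathbf 1_{\{H_n\le r\}}\to0$ almost surely because, the jumps not accumulating, only finitely many returns to $x_0$ fall in $[0,r]$; dominated convergence then gives $a_n\to0$, hence $(\star\star)$. Unwinding the reductions --- $(\star\star)\Rightarrow(\star)$ through the strong Markov property at $H(x_0)$, then $(\star)\Rightarrow$ the martingale identity through the Markov property at time $t$ --- shows that $\mc M$ is a $\p_x$-martingale for every $x\ge0$. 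It is worth stressing that \eqref{Lequal1} is used here as an \emph{equality}: the inequality $L_{x_0,x_0}(\lambda)\le1$, which always holds by right-continuity, would only give $\E_{x_0}[Z_{N_r+1}]\le1$, hence the supermartingale --- rather than martingale --- property; the content of the lemma is precisely that under \eqref{Lequal1} no mass escapes along the (possibly infinitely many) returns of $X$ to $x_0$.
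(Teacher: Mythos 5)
Your proof is correct and follows essentially the same route as the paper, whose own proof simply delegates to Theorem 4.4 of Bertoin--Watson for $x>0$ and handles $x=0$ by stopping at the successive return times to $x_0$: the renewal decomposition at the returns $H_n$, the identity $\E_{x_0}[Z_{n+1}\mid\mc F_{H_n}]=Z_n L_{x_0,x_0}(\lambda)$, and the non-accumulation of visits to $x_0$ are exactly the ingredients of that cited argument. Your write-up has the merit of being self-contained and of treating all starting points $x\geq 0$ uniformly.
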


\begin{proof}
For $x>0$, we apply Theorem $4.4$ in \cite{BW18}. To show that it is also a martingale with respect to $\p_0$, we define the random variables $R_0 = 0 < R_1 \coloneqq H(x_0) < R_2 < \dots$ to be the return times to $x_0$, where $x_0$ is the point that appears in the definition of $\h$. The stopped process $(M_{t \wedge R_n})_{t \geq 0}$, is then a martingale and with an argument similar to the one in the proof of Theorem $4.4$ in \cite{BW18}, we can take the limit $n \to \infty$ and obtain the statement. 
\end{proof}

The next step consists in using the martingale $\mc M$ to ``tilt'' the probability measure $\p_x$. In other words, we introduce the probability measure $\pt_x$ (and corresponding expectation $\Et$) defined by
\begin{equation}
\label{tiltedproba}
\tilde{\p}_x(A)= \E_x[\mathbf{1}_A \mc M_t ], \quad \quad \forall A \in \mc F_t.
\end{equation}
Since $\p_x$ is a probability measure on the space of càdlàg paths, the same holds for $\pt_x$. Let $Y=(Y_t)_{t \geq 0}$ be the process with distribution $\pt_x$. The finite-dimensional distributions of $Y$ are thus given in the following way. Let $0 \leq t_1 < \dots < t_n \leq t$, and $F: \mathbb{R}^n \to \mathbb{R_+}$. Then, 
\begin{equation}
\Et_x [F(Y_{t_1}, \dots , Y_{t_n} )] = \E_x[\mc M_t F(X_{t_1}, \dots , X_{t_n} ) ], \quad \quad x \geq 0.
\end{equation}

\begin{lem}
\label{LemmaYinageneralsetting}
Assume \ref{Sec2upwardskip}-\ref{Sec2positiverettimes} and \eqref{Lequal1}. Then the following hold.
\begin{enumerate}[label=(\roman*)]
\item $Y$ is a Markov process, recurrent in $\zeroinf$. Moreover, denoting by $H_Y(x) = \inf \{ t>0 \; : \; Y_t= x \}$ the first hitting time of $x \geq 0$, one has that for all $x>0$,
\begin{equation}
    \Et_x\left( H_Y(x) \right) = - L'_{x,x}(\lambda).
\end{equation}
As a result, $Y$ is positive recurrent if and only if \eqref{Lfinitederivative} holds. 
\item If the stronger \eqref{Lbiggerthan1} holds, then $Y$ is exponentially recurrent, which means that it exists $\epsilon > 0$ such that  $\Et_{x} \left[\exp(\epsilon H_Y(x) ) \right] < \infty$.
\end{enumerate}
\end{lem}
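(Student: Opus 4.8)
The plan is to use that $Y$ is nothing but the Doob $h$-transform of $X$ by the martingale $\mc M$ of Lemma~\ref{LemmaMinageneralsetting}, and to read off the three assertions from the change of measure~\eqref{tiltedproba} combined with optional sampling of $\mc M$. First I would observe that, $\mc M$ being a strictly positive $\p_x$-martingale, \eqref{tiltedproba} defines a consistent family of laws, so $Y$ is Markov with transition kernel $x\mapsto \h(x)^{-1}e^{-\lambda t}\E_x[\h(X_t)\Eg_t;\,X_t\in\cdot\,]$, exactly as in the classical $h$-transform construction (cf.\ the arguments of \cite{BW18}). For the recurrence of $Y$, I would apply optional sampling to the nonnegative martingale $\mc M$ at the bounded time $\Hit{x_0}\wedge t$ and let $t\uparrow\infty$ by monotone convergence; since $\h(x_0)=L_{x_0,x_0}(\lambda)=1$ by \eqref{Lequal1} and $X_{\Hit{x_0}}=x_0$ on $\{\Hit{x_0}<\infty\}$, this gives $\pt_x(H_Y(x_0)<\infty)=\h(x)^{-1}L_{x,x_0}(\lambda)=1$ for every $x>0$ and, with $\Hit{x_0}$ read as the first return time, $\pt_{x_0}(H_Y(x_0)<\infty)=L_{x_0,x_0}(\lambda)=1$. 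As the density $\mc M$ is strictly positive, $Y$ remains irreducible on $\zeroinf$, so these facts and the standard criterion (an irreducible Markov process that a.s.\ reaches a point to which it a.s.\ returns is recurrent) yield recurrence of $Y$ in $\zeroinf$.

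Next I would establish the mean-return-time formula. Fix $x>0$ and let $H_Y(x)$ be the first return time to $x$ under $\pt_x$, a.s.\ finite by recurrence and a.s.\ positive by \ref{Sec2positiverettimes}. For $q\ge\lambda$, apply the change of measure to the bounded, $\mc F_{H_Y(x)\wedge t}$-measurable functional $e^{-(q-\lambda)(H_Y(x)\wedge t)}$, use optional sampling, and let $t\to\infty$: the left-hand side converges by bounded convergence, the boundary term $\h(x)^{-1}e^{-(q-\lambda)t}\E_x[\mc M_t;H_Y(x)>t]$ vanishes (for $q>\lambda$ since $\E_x[\mc M_t]=\h(x)$, for $q=\lambda$ since $\E_x[\mc M_t;H_Y(x)>t]=\h(x)\pt_x(H_Y(x)>t)\to0$ by recurrence), and what remains is
\[
\Et_x\!\left[e^{-(q-\lambda)H_Y(x)}\right]=L_{x,x}(q),\qquad q\ge\lambda ,
\]
so in particular $L_{x,x}(\lambda)=1$. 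Differentiating at $\lambda$ from the right and noting that $s\mapsto(e^{-sH_Y(x)}-1)/s$ is nondecreasing in $s>0$ with limit $-H_Y(x)$ as $s\downarrow0$ and stays $\le 0$, monotone convergence applied to the nonnegative, increasing family $(e^{-s_0H_Y(x)}-1)/s_0-(e^{-sH_Y(x)}-1)/s$ (with $s_0$ fixed and $s\downarrow0$) gives $(L_{x,x}(\lambda+s)-1)/s\to-\Et_x(H_Y(x))$, i.e.\ $\Et_x(H_Y(x))=-L'_{x,x}(\lambda)$ in $(0,\infty]$, finite precisely when $L'_{x,x}(\lambda)>-\infty$. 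Since positive versus null recurrence is a property of the chain and not of the base point, $Y$ is positive recurrent if and only if \eqref{Lfinitederivative} holds.

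For part (ii), assume \eqref{Lbiggerthan1}: there is $q$ with $L_{x_0,x_0}(q)\in(1,\infty)$; by monotonicity and right-continuity $q<\lambda$, and by convexity $L_{x_0,x_0}$ is finite on $[q,\infty)$. Using the analysis of the maps $L_{\cdot,\cdot}$ carried out in this section — the strong Markov property, irreducibility, and the upward skip-free structure, which forces every upward passage to cross all intermediate levels — I would propagate this finiteness to the statement: there is $q'\in[q,\lambda)$ such that $L_{x,x}(q')<\infty$ for every $x>0$. For such a $q'$, the identity of the previous paragraph extends by monotone convergence to $\Et_x\!\left[e^{(\lambda-q')H_Y(x)}\right]=L_{x,x}(q')<\infty$, which is exactly exponential recurrence with $\epsilon=\lambda-q'>0$.

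The two delicate points I expect are: (a) transferring the good behaviour of $L_{x_0,x_0}$ near $\lambda$ — recurrence in the first step, and finiteness of $L_{x,x}$ strictly below $\lambda$ in the last step — from the single reference point $x_0$ to all $x>0$; and (b) the passages $t\to\infty$ in the optional-sampling arguments, delicate because $\mc M$ need not be uniformly integrable, which I would handle by working only with the bounded functionals $e^{-(q-\lambda)(H_Y(x)\wedge t)}$ and using recurrence to kill the boundary terms. I expect (a), and more precisely the uniform-in-$x$ finiteness of the Laplace transform $L_{x,x}$ just below $\lambda$, to be the real obstacle; here the upward skip-free decomposition of hitting times through intermediate levels, together with irreducibility, is the tool of choice.
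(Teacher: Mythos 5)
Your proposal is correct and follows essentially the same route as the paper: both tilt by the martingale $\mc M$, apply optional sampling at $H(x)\wedge t$, and pass to the limit by monotone convergence to identify $\Et_x\bigl[e^{-(q-\lambda)H_Y(x)}\bigr]$ with $L_{x,x}(q)$ at $q=\lambda$ (recurrence), at its right derivative (mean return time), and at $q=\lambda-\epsilon$ (exponential recurrence). The only cosmetic differences are that you recover $\Et_x(H_Y(x))=-L'_{x,x}(\lambda)$ by differentiating the Laplace-transform identity at $\lambda$ while the paper computes $\int_0^\infty\pt_x(H_Y(x)>t)\,dt$ directly, and that you are more explicit about transferring $L_{x,x}(\lambda)=1$ and the finiteness of $L_{x,x}$ below $\lambda$ from the reference point $x_0$ to arbitrary $x$, a step the paper delegates to the corresponding results of \cite{BW18} and to the reader.
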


\begin{proof}
\begin{enumerate}[label=(\roman*)]
    \item The process $Y$ is Markov because $\mc M$ is multiplicative and the (strong) Markov property is preserved by transformations based on multiplicative functionals. We denote $\pt_x$ the law of $Y$ started at $x\geq0$ and $\Et_x$ the corresponding expectation. To show that $Y$ is recurrent, we observe that, for $x>0$, 
    \begin{align}
\Et_{x} \left[H_Y(x)  < \infty \right] & = \lim_{t \to \infty}  \Et_{x} \left[ H_Y(x) \leq t \right] = \lim_{t \to \infty}  \E_{x} \left[\mc M_{t}, H(x) \leq t \right] \\ & = \lim_{t \to \infty}  \E_{x} \left[\mc M_{H(x)}, H(x) \leq t\right] \\
& =\E_{x} \left[\mc M_{H(x)}, H(x) < \infty \right] = L_{x,x} (\lambda)= 1,
\end{align}
where the second inequality comes from the definition of probability tilting, the third from the optional sampling theorem and the last from the monotone convergence theorem.
To show that it is actually positive recurrent, we note that, for $x>0$,
\begin{align}
\label{computaitontotalmassoftheoccupationmeasure}
\Et_{x} \left[H_Y(x)  \right] & = \int_0^{\infty} \left[ 1- \Et_{x} \left[H_Y(x) \leq t \right]  \right] dt \\
& =\int_0^{\infty} \left[ 1- \E_{x} \left[\mc M_t, H(x) \leq t \right] \right] dt \\
& =\int_0^{\infty}  \E_{x} \left[\mc M_{H(x)}, t < H(x) < \infty \right]  dt \\
& = \E_{x} \left[H(x) \mc M_{H(x)}, H(x) < \infty \right] =  - L'_{x,x}(\lambda),
\end{align}
which proves the assertion. 
\item With similar computations as above, one can prove that, since $H_Y(x) < \infty$ a.s., we have, 
\begin{align}
 \Et_{x} \left[\exp(\epsilon H_Y(x) ) \right] & = \lim_{t \to \infty}  \Et_{x} \left[\exp(\epsilon H_Y(x) ), H_Y(x) < t \right] \\
 & = \lim_{t \to \infty}  \E_{x} \left[\mc M_{H(x)} \exp(\epsilon H(x) ), H(x) < t \right] \\
 &  =\E_{x} \left[ \mc E_{H(x)} \exp( (\epsilon- \lambda) H(x) ), H(x) < \infty \right] = L_{x,x}(\lambda - \epsilon),
\end{align}
which is finite for $\epsilon$ small enough thanks to condition \eqref{Lbiggerthan1}, when $x=x_0$. The case of a general $x>0$ follows easily. \qedhere
\end{enumerate} 
\end{proof}

\subsection{A family of supermartingales}
From the previous lemma, it is clear that condition $L_{x_0, x_0}(\lambda) =1$ is necessary to construct the martingale $\mc M$. The next result shows that, when $q$ is such that $L_{x_0, x_0}(q)<1$, \textit{i.e.} $q \geq \lambda$, we can associate to $X$ a family of supermartingales. 
Fix $x_0\geq0$ and $q \leq \lambda$ and define the function
\begin{equation}
\label{definitionofhq}
    \hq (x) \coloneqq L_{x,x_0}(q), \quad \quad x \geq 0.
\end{equation}
Adapting the proof of Theorem $4.4$ in  \cite{BW18}, we have the following Lemma. 
\begin{lem}
\label{LemmaSinageneralsetting}
Let $q \geq \lambda$. The process
\begin{equation}
    \label{DefinitionofthesupermartingaleSinthegeneralsetting}
    \mc S_t^{(q)} \coloneqq e^{-qt} \h_q(X_t) \Eg_t, \quad \quad t \geq 0
\end{equation}
is a $\p_x$-supermartingale for every $x\geq0$ with respect to the natural filtration $(\mc F_t)_{t \geq 0}$ of $X$.
\end{lem}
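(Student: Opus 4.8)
The plan is to mimic the proof of Lemma~\ref{LemmaMinageneralsetting}, replacing the martingale property with a supermartingale inequality that comes from the convexity of $L_{x,y}$ in its argument. First I would reduce, as usual, to the case $x>0$ and to checking the supermartingale property along the return times to $x_0$; the extension to $\p_0$ and to arbitrary $t$ then follows by the same truncation/monotone-limit argument used in Lemma~\ref{LemmaMinageneralsetting} and Lemma~\ref{LemmaSinageneralsetting}'s reference to Theorem~$4.4$ in \cite{BW18}. So fix $x>0$ and $q\geq\lambda$. By the strong Markov property at the hitting time $H(x_0)$ (applied on the event $\{H(x_0)<\infty\}$, which has full probability up to the desired truncation), one gets the key multiplicative identity for the function $\hq$: writing $H=H(x_0)$,
\[
\hq(x) = L_{x,x_0}(q) = \E_x\!\left( e^{-qH}\,\Eg_{H}\,\hq(X_H)\,\mathbf 1_{\{H<\infty\}}\right),
\]
since $X_H = x_0$ and $\hq(x_0)=L_{x_0,x_0}(q)$... wait, that last factor is $L_{x_0,x_0}(q)\le 1$ rather than $1$, and this is precisely where the supermartingale (as opposed to martingale) behaviour enters. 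Indeed, because $q\geq\lambda$ we have $L_{x_0,x_0}(q)\leq1$, so the right-hand side is \emph{at least} $\E_x( e^{-qH}\Eg_H\,\hq(X_H)\mathbf 1_{\{H<\infty\}})$ only after dividing; more carefully, the tower/Markov argument gives $\hq(x)\geq \E_x(e^{-qH}\Eg_H\,\hq(X_H)\mathbf 1_{\{H<\infty\}})$ directly once one inserts the extra factor $L_{x_0,x_0}(q)\le 1$. Iterating this over a sequence of stopping times that exhausts $[0,\infty)$ (the return times $R_n$ to $x_0$, as in Lemma~\ref{LemmaMinageneralsetting}) yields the supermartingale inequality $\E_x(\mc S^{(q)}_{t}\mid\mc F_s)\leq \mc S^{(q)}_s$ for $s\leq t$.

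More concretely, the heart of the matter is the one-step estimate: for a stopping time $\sigma\le t$,
\[
\E_x\!\left(\mc S^{(q)}_t \mid \mc F_\sigma\right)
 = \mc S^{(q)}_\sigma\cdot e^{q\sigma}\h_q(X_\sigma)^{-1}\,\Eg_\sigma^{-1}\,
   \E_{X_\sigma}\!\left(e^{-q(t-\sigma)}\Eg_{t-\sigma}\,\hq(X_{t-\sigma})\right),
\]
so it suffices to show $\E_z\!\left(e^{-qs}\Eg_s\,\hq(X_s)\right)\le \hq(z)$ for all $z>0$, $s\ge 0$. Using $\hq(X_s)=L_{X_s,x_0}(q)=\E_{X_s}(e^{-qH(x_0)}\Eg_{H(x_0)}\mathbf 1_{\{H(x_0)<\infty\}})$ together with the Markov property at time $s$, one identifies $\E_z(e^{-qs}\Eg_s\,\hq(X_s))$ with $\E_z\big(e^{-qH(x_0)}\Eg_{H(x_0)}\mathbf 1_{\{s<H(x_0)<\infty\}}\big)\le L_{z,x_0}(q)=\hq(z)$; and on the complementary event $\{H(x_0)\le s\}$ one reaches $x_0$ first, where the strong Markov property contributes a factor $\E_{x_0}(e^{-q(s-H(x_0))}\Eg_{s-H(x_0)}\hq(X_{s-H(x_0)}))$, which — by the very same inequality applied at $z=x_0$ — is at most $\hq(x_0)=L_{x_0,x_0}(q)\le 1$ (here $q\ge\lambda$ is used). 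Combining the two contributions gives $\E_z(e^{-qs}\Eg_s\,\hq(X_s))\le \hq(z)$, which is exactly the required supermartingale inequality.

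The main obstacle is organising the bootstrap cleanly: the inequality $\E_z(e^{-qs}\Eg_s\hq(X_s))\le\hq(z)$ is "self-referential" in that one wants to use it at the point $x_0$ in order to prove it at a general $z$. The way to make this rigorous — following Theorem~$4.4$ in \cite{BW18} — is to first prove the \emph{martingale} identity for the truncated object $e^{-qt}\hq(X_{t\wedge R_1})\Eg_{t\wedge R_1}$ stopped at the first return $R_1=H(x_0)$ to $x_0$ (this is a genuine equality, requiring no use of $L_{x_0,x_0}(q)\le1$, just $\hq(x)=\E_x(e^{-qH(x_0)}\Eg_{H(x_0)}\hq(x_0)\mathbf 1_{\{\cdot\}})$ with $\hq(x_0)$ as a constant multiplier after renormalising), then pass from $R_n$ to $R_{n+1}$ picking up one factor $L_{x_0,x_0}(q)\in(0,1]$ each time, and finally let $n\to\infty$ using $R_n\uparrow\infty$ $\p_x$-a.s. (recurrence of $X$ towards $x_0$, which is \ref{Sec2irreducible}) together with Fatou/dominated convergence. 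The integrability needed to justify the conditional expectations and the limit is supplied by $q\ge\lambda>\|g\|_\infty$ being false in general — instead one uses that $\mc S^{(q)}$ is a nonnegative process and that each $\mc S^{(q)}_{t\wedge R_n}$ is dominated in $L^1(\p_x)$ by $\mc S^{(q)}_0=\hq(x)<\infty$, which is exactly the supermartingale bound being built up inductively. Finally, the case $x=0$ follows as in Lemma~\ref{LemmaMinageneralsetting} since $0$ is an entrance boundary (\ref{Sec2entranceboundary}) and the flow reaches any $\varepsilon>0$ without a jump with probability tending to $1$.
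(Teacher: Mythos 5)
Your argument is correct in substance and follows essentially the route the paper intends (adapting Theorem $4.4$ of \cite{BW18}): reduce to the bound $\E_z\bigl(e^{-qs}\Eg_s \hq(X_s)\bigr)\le \hq(z)$, establish it by decomposing at the successive return times $R_n$ to $x_0$, where each return contributes a multiplicative factor $L_{x_0,x_0}(q)\le 1$ precisely because $q\ge\lambda$, and pass to the limit using nonnegativity, Fatou and $t\wedge R_n\to t$. Two displayed slips do not affect the conclusion but should be fixed: the conditional-expectation identity should read $\E_x\bigl(\mc S^{(q)}_t \mid \mc F_\sigma\bigr)=e^{-q\sigma}\Eg_\sigma\,\E_{X_\sigma}\bigl(e^{-q(t-\sigma)}\Eg_{t-\sigma}\hq(X_{t-\sigma})\bigr)$ (the extra factors $e^{q\sigma}\Eg_\sigma^{-1}$ should not appear, though your stated reduction is the right one), and the process stopped at $R_1$ is already only a supermartingale rather than a "genuine equality" martingale, since its value at $R_1$ is $e^{-qR_1}\Eg_{R_1}\hq(x_0)$ with $\hq(x_0)=L_{x_0,x_0}(q)\le 1$ — which is exactly the one-step inequality you then iterate, so no actual gap results.
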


As before, we use the supermartingale $\mc S^{(q)}$ to ``tilt''  the probability measure $\p_x$ and introduce a family of possibly defective (\text{i.e.} possibly with a finite lifetime $\zeta$) Markov processes $Y^{(q)} = \left(Y_t^{(q)}\right)_{0 \leq t < \zeta}$. More precisely, the distribution of the Markov process $Y^{(q)} = \left( \Ytq \right)_{0 \leq t < \zeta}$, that we denote by $\p^{(q)}$, is defined in the following way. For $t \geq 0$ and every non-negative functional $F$ defined on Skorokhod's space $\mc D_{[0,t]}$ of càdlàg paths $\omega : [0,t] \to (0, \infty)$, 
\begin{equation}
\label{definitionofYq}
\Eq_x [F( (\Ytq)_{0 \leq s \leq t}), \zeta > t] = \frac{1}{\h_{q}(x)} \E_x [ \Stq F((X_s)_{0\leq s \leq t})], \quad \quad x>0.
\end{equation}

\begin{lem} 
\label{LemmaYqinageneralsetting}
If there exists a $r \geq \lambda$ such that $Y^{(r)}$ is point-recurrent in $\zeroinf$, then $\mc S_t^{(r)}$ is a martingale, and $L_{x,x}(r) = 1$ for every $x>0$. 
\end{lem}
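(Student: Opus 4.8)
The plan is to deduce from point-recurrence of $Y^{(r)}$ that this process is non-defective, to use this to upgrade the supermartingale $\mc S^{(r)}$ of Lemma~\ref{LemmaSinageneralsetting} to a genuine martingale, and finally to read off $L_{x,x}(r)=1$ from an optional-sampling computation combined with the change of measure~\eqref{definitionofYq}. Throughout, $h_r(x)=L_{x,x_0}(r)$ is strictly positive on $\zeroinf$ by~\ref{Sec2irreducible}--\ref{Sec2entranceboundary}; it is also finite, as is needed for~\eqref{definitionofYq} to make sense.

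\emph{Non-explosion.} First I would show that $\zeta=\infty$ holds $\p^{(r)}_x$-almost surely for every $x>0$. Point-recurrence means that the return time $H_{Y^{(r)}}(x)$ is $\p^{(r)}_x$-a.s.\ finite, with the convention that it equals $+\infty$ when $Y^{(r)}$ is killed before coming back to $x$. Iterating the strong Markov property at the successive visits of $x$ then produces an increasing sequence of $\p^{(r)}_x$-a.s.\ finite times $0<R_1<R_2<\cdots$ at which $Y^{(r)}=x$, with i.i.d.\ increments; since the change of measure leaves the deterministic flow $\tau$ and the downward direction of the jumps unchanged, $Y^{(r)}$ still moves strictly upward between its jumps, so property \ref{Sec2positiverettimes} is inherited and the increments $R_{n+1}-R_n$ are a.s.\ strictly positive. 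Hence $R_n\uparrow\infty$ a.s., and since $R_n<\zeta$ for every $n$, this forces $\zeta=\infty$. This is the step I expect to be the main obstacle: one must be careful about the killing convention implicit in ``point-recurrence'', and about the fact that strictly positive return times --- which survive the tilting precisely because they depend only on the flow being strictly increasing and on the jumps being downward --- preclude an accumulation of returns in finite time.

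\emph{The martingale property.} Next I would take $F\equiv1$ in~\eqref{definitionofYq}; combined with the previous step this gives $1=\p^{(r)}_x(\zeta>t)=h_r(x)^{-1}\,\E_x[\mc S_t^{(r)}]$, hence $\E_x[\mc S_t^{(r)}]=h_r(x)=\mc S_0^{(r)}$ for every $t\ge0$ and $x>0$. Since a non-negative supermartingale with constant expectation is a martingale, $\mc S^{(r)}$ is then a $\p_x$-martingale for every $x>0$, and for $x=0$ as well by arguing through the return times to $x_0$ exactly as in the proof of Lemma~\ref{LemmaMinageneralsetting}. In particular \eqref{definitionofYq} now identifies, on each $\mc F_t$, the law $\p^{(r)}_x$ with the honest change of measure $h_r(x)^{-1}\mc S_t^{(r)}\cdot\p_x$.

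\emph{Identification of $L_{x,x}(r)$.} Finally I would fix $x>0$ and let $H(x)$ be the first return time of $X$ to $x$ under $\p_x$, which is a.s.\ strictly positive by~\ref{Sec2positiverettimes}. Applying~\eqref{definitionofYq} with $F=\mathbf{1}_{\{H(x)\le t\}}$ (licit thanks to non-explosion), then using $\{H(x)\le t\}\in\mc F_{H(x)\wedge t}$ together with optional sampling for the martingale $\mc S^{(r)}$, and finally that $X_{H(x)}=x$ on $\{H(x)<\infty\}$ so that $\mc S_{H(x)}^{(r)}=h_r(x)\,e^{-rH(x)}\Eg_{H(x)}$ there, one obtains
\[
\p^{(r)}_x\!\left(H_{Y^{(r)}}(x)\le t\right)=\frac{1}{h_r(x)}\,\E_x\!\left[\mc S_{H(x)}^{(r)},\,H(x)\le t\right]=\E_x\!\left[e^{-rH(x)}\Eg_{H(x)},\,H(x)\le t\right].
\]
Letting $t\to\infty$, the left-hand side tends to $\p^{(r)}_x(H_{Y^{(r)}}(x)<\infty)=1$ by point-recurrence, while the right-hand side tends to $L_{x,x}(r)$ by monotone convergence. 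Hence $L_{x,x}(r)=1$, which is the assertion.
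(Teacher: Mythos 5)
Your proof is correct and follows essentially the same route as the paper: non-defectiveness of $Y^{(r)}$, then $\E_x[\mc S_t^{(r)}]=h_r(x)$ via $F\equiv1$ in \eqref{definitionofYq} so that the supermartingale with constant expectation is a martingale, and finally optional sampling plus monotone convergence to get $L_{x,x}(r)=1$. The only difference is that you spell out the deduction of $\zeta=\infty$ from point-recurrence (via the a.s.\ strictly positive, i.i.d.\ return increments), a step the paper simply asserts.
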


\begin{proof}
If $Y^{(r)}$ is positive recurrent, it cannot be defective, \textit{i.e.}, $\p^{(r)}_x \left( \zeta = \infty \right) = 1$. This is equivalent to say that $\E^{(r)}_x \left( \mc S_t^{(r)} \right) = \h_{r}(x)$, which implies that $\mc S^{(r)}$ is a martingale for every $x >0$. Since  $Y^{(r)}$ is point-recurrent, we have that, for every $x>0$, 
    \begin{align}
        1 & = \lim_{t \to \infty} \p_x^{(r)} \left(H_{_{Y^{(r)}}}(x) \leq t \right) = \lim_{t \to \infty} \frac{1}{\h_{r}(x)} \E_x \left[  \mc S_t^{(r)}, H(x) \leq t \right] \\
        & =  \lim_{t \to \infty} \frac{1}{\h_{r}(x)} \E_x \left[  \mc S_{H(x)}^{(r)}, H(x) \leq t \right] = \lim_{t \to \infty} \E_x \left[  e^{- r H(x)} \mc E_{H(x)}, H(x) \leq t \right] \\
        & = \E_x \left[  e^{- r H(x)} \mc E_{H(x)}, H(x) < \infty \right] = L_{x,x}(r),
    \end{align}
where the equalities follow from the optional sampling theorem and the monotone convergence theorem. \qedhere
\end{proof}

\subsection{The process killed when exiting compact sets}
In this paragraph, we focus on the behaviour of the process $X$ killed when exiting compact sets. A necessary preamble for the rest of the analysis is the irreducibility. In fact,  even though $X$ is irreducible in the positive half-line by \ref{Sec2irreducible}, it may happen that there exist some $0<a<b$ such that the process is no longer irreducible, when killed exiting $[a,b]$. We define the \textit{first exit-time} from $[a,b]$ 
\begin{equation}
\label{definitionexittime}
\sigma(a,b) \coloneqq \inf \{ t > 0 \; : \; X_t \notin [a,b] \},
\end{equation}
and we call an interval $(a,b)$ \textit{good} if the process killed at time $\sab$ remains irreducible, \textit{i.e.},
\begin{equation}
\p_x(H(y) < \sab) > 0 \quad \quad  \text{for all} \; x,y \in (a,b).
\end{equation}

The argument of Lemma $3.1$ in \cite{BERT18} shows the following.
\begin{lem}
\label{lemmagoodintervals}
Assume that \ref{Sec2upwardskip}-\ref{Sec2positiverettimes} hold. Then, for every $\epsilon \in (0,1)$,  there exists a good interval $(a,b)$ with $a < \epsilon$ and $b > 1/\epsilon$.
\end{lem}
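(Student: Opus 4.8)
We follow the argument of Lemma $3.1$ in \cite{BERT18}. The strategy is to separate the two directions of irreducibility of the process killed at $\sab$, the upward one being essentially free. Indeed, by \ref{Sec2upwardskip} the only way for $X$ to move above its current position is to follow the strictly increasing deterministic flow, and \ref{Sec2irreducible} forces the flow issued from any point to reach every strictly larger point in finite (deterministic) time: otherwise, since $X$ cannot jump upwards, that larger point would be unreachable from below and hence, again by skipfreeness, from everywhere. Consequently, for $x<y$ in $(a,b)$ the flow from $x$ passes through $y$ at a finite time, strictly before it could reach $b$, and since the total jump rate is bounded the event that no jump occurs on that time window has positive probability; thus $\p_x(H(y)<\sab)>0$. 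As the relation ``$u$ is reachable from $v$ before exiting $[a,b]$'' is transitive (strong Markov property), it is enough to produce $a<\epsilon$, $b>1/\epsilon$ such that, inside $[a,b]$, the point $x_0$ can reach every $y\in(a,x_0)$ and every $x\in(x_0,b)$ can reach $x_0$; both of these are ``descent'' statements, in keeping with the theme that downward motion is the delicate part.

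To obtain these I would use the global irreducibility \ref{Sec2irreducible} together with a confinement observation. For a fixed target $y$, the event $\{H(y)<\sab\}$ increases, as $a\downarrow0$ and $b\uparrow\infty$, to $\{H(y)<\infty\}$: on the latter event the path of $X$ up to time $H(y)$ is càdlàg on a compact time interval, makes only finitely many jumps (no accumulation), and never reaches $0$ (because $0$ is an entrance boundary, \ref{Sec2entranceboundary}), hence is contained in a compact subinterval of $(0,\infty)$. Since $\p_{x_0}(H(y)<\infty)>0$ by \ref{Sec2irreducible}, we get $\p_{x_0}(H(y)<\sab)>0$ once $a$ is small and $b$ large enough; in particular $x_0$ can descend below $\epsilon$ inside $[a,b]$, and likewise some level above $1/\epsilon$ can descend to $x_0$ inside $[a,b]$. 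Combined with the free upward movement, the upward flow-segments of the excursions realising this sweep continuously through whole subintervals, so the communicating class of $x_0$ inside $[a,b]$ contains an interval straddling $[\epsilon/2, 2/\epsilon]$.

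The genuinely delicate point — and the one I expect to be the main obstacle — is uniformity: the construction above furnishes, for each target separately, its own admissible interval, whereas a single pair $(a,b)$ must serve all pairs of points simultaneously. My plan is to handle this through the order structure coming from \ref{Sec2upwardskip}: the communicating classes of the process killed at $\sab$ are subintervals of $(a,b)$, so the class of $x_0$ has the form $(\ell,r)$ with $a\le\ell$ and $r\le b$, and $\ell>a$ (resp. $r<b$) can occur only if there is a ``barrier'' level below (resp. above) $x_0$ that no jump keeping the process inside $[a,b]$ can cross. The class of $x_0$ only widens as $a$ decreases and $b$ increases, so if no $(a,b)$ with $a<\epsilon<1/\epsilon<b$ were good, passing to the limit $a\downarrow0$, $b\uparrow\infty$ would produce a level $z$ with $\p_z(H(x_0)<\infty)>0$ but from which every trajectory reaching $x_0$ has zero infimum — contradicting that such a trajectory, being càdlàg on a finite horizon with finitely many jumps and not reaching $0$, is bounded away from $0$. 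This contradiction with \ref{Sec2irreducible} yields a good interval $(a,b)$ with $a<\epsilon$ and $b>1/\epsilon$. (Alternatively, one checks directly that the union of two good intervals sharing a common point is again good, deduces the existence of a maximal good interval containing $x_0$, and runs the same confinement argument to see that its endpoints must be $0$ and $\infty$.)
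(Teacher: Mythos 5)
Your preliminary reductions are correct and are exactly the right ones: upward reachability inside $[a,b]$ is free (strictly increasing flow plus bounded jump rate), reachability before $\sigma(a,b)$ is transitive, the set of points reachable from a fixed point before $\sigma(a,b)$ is an interval with right endpoint $b$, and the confinement argument (finitely many jumps on a finite horizon, no hitting of $0$ by \ref{Sec2entranceboundary}) shows that for each \emph{fixed} target $y$ one can find $a,b$ with $\p_{x_0}(H(y)<\sigma(a,b))>0$. You also correctly locate the crux in the uniformity over targets. But neither of your two proposed resolutions closes that gap. In (a), the negation of the lemma does not produce ``a level $z$ from which every trajectory reaching $x_0$ has zero infimum'': it only says that for \emph{each} candidate floor $a$ there are targets $v$ just above $a$ all of whose successful trajectories dip strictly below $a$. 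Every individual trajectory still has a strictly positive infimum; what can fail is that these infima admit a positive \emph{uniform} lower bound over the family of trajectories needed as $v\downarrow a$, and your contradiction conflates pointwise with uniform positivity. In (b), the union of two overlapping good intervals is indeed good (a nice observation), and increasing unions of good intervals are good, but this presupposes the existence of at least one good interval --- which is essentially the statement to be proved --- and the assertion that the maximal one ``has endpoints $0$ and $\infty$'' would anyway only restate irreducibility rather than produce an interval with $0<a<\epsilon<1/\epsilon<b<\infty$.

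The missing ingredient is a \emph{canonical} choice of the boundary for which uniform confinement is automatic. Fix an auxiliary small $a_1>0$ and a large $b$, and set $\ell:=\inf\{v>0:\p_{x_0}(H(v)<\sigma(a_1,b))>0\}\ge a_1$; by your single-target confinement step, $\ell<\epsilon$ once $a_1$ is small and $b$ is large. The key point is that, $\p_{x_0}$-a.s., the process cannot go below $\ell$ before time $\sigma(a_1,b)$: if it did with positive probability, then at the first passage below some level $c<\ell$ it would sit at a point of $[a_1,c]$ and, by flowing upward without jumping, would hit $c$ before exiting $[a_1,b]$, making $c<\ell$ reachable and contradicting the definition of $\ell$. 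Hence $\sigma(\ell,b)=\sigma(a_1,b)$ almost surely under $\p_{x_0}$, so every level of $(\ell,b)$ is reached before $\sigma(\ell,b)$ with positive probability --- precisely the uniformity you were missing, and the step where \ref{Sec2upwardskip} and the increasing flow are used in an essential, not merely decorative, way (a symmetric argument, built on the supremum of the set of starting points from which $x_0$ can be reached before exiting, handles the upper boundary, and the two constructions must then be combined with some care). This is the mechanism behind Lemma $3.1$ of \cite{BERT18}, which the present paper invokes without reproducing; without it, your proof has a genuine gap at exactly the step you flag as delicate.
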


The next step consists in applying the Krein-Rutman theorem. We consider the Banach space $\Chatab$ of continuous functions $f : [a,b] \to \mathbb{R}$ with $f(b)=0$ endowed with the supremum norm $\norm{f} = \sup_{x \in [a,b)} |f(x)|$. We assume $f(b)=0$ because the process started at $b$ leaves $[a,b]$ immediately\footnote{In this case $b$ is said to be an \textit{exit boundary}.}. We do not assume yet that $[a,b]$ is a good interval, but this will be crucial at a later point.

Recalling that $\norm{g}_{\infty}<\infty$, we define $\qg \coloneqq 1 + \norm{g}_{\infty}$, so that 
\begin{equation}
\mc E_ t e^{-t \qg} \leq e^{-t} \quad \quad \text{for all} \; t \geq 0.
\end{equation}
Then, we introduce the operator 
\begin{equation}
\Uab f(x) \coloneqq \E_x \left( \int_0^{\sab} f(X_t)  \Eg_ t e^{-t \qg} dt   \right), \quad \quad x \in [a,b],
\end{equation}
that is defined for every bounded measurable function $f : [a,b] \to \mathbb{R}$. The operator $U_{a,b}$ maps $\Chatab$ into itself. The family of functions $\{\Uab f \; : \; \norm f \leq 1 \}$ is equicontinuous; the proof is similar to that of Lemma $3.2$ in \cite{BERT18} and we leave the details to the reader.
$\Uab$ satisfies the hypothesis of the Krein-Rutman theorem (see for example the requirements of Theorem $9.5$ in Deimling \cite{DEIMLING85}), which entails the following result.

\begin{prop}
\label{kreinrutman}
Let $(a, b)$ a good interval. Then
\begin{enumerate}[label=(\roman*)]
\item the spectral radius $r(a,b)$ of $\Uab$ is positive,
\item there exist a function $\hab \in \Chatab$ strictly positive and a finite Borel measure $\nuab$ on $[a,b]$ with no atoms at $b$ such that 
\begin{equation}
\Uab \hab = r(a,b) \hab, \quad \Uabd \nuab = r(a,b) \nuab, \quad \text{and} \quad  \langle \nuab, \hab \rangle = 1,
\end{equation}
\item the spectral gap holds, i.e., if $r  \neq r(a,b)$ belongs to the spectrum of $\Uab$, then $|r| < r(a,b)$.
\end{enumerate}
\end{prop}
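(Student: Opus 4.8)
The plan is to verify that $\Uab$, regarded as a bounded operator on the Banach space $\Chatab$ ordered by the cone $\Chatabp$ of non-negative functions, satisfies the hypotheses of the strong form of the Krein--Rutman theorem (for instance the requirements of Theorem $9.5$ in Deimling \cite{DEIMLING85}): compactness, positivity, and strong positivity, by which I mean that $\Uab$ sends every non-zero element of $\Chatabp$ to a strictly positive function. Compactness follows at once from the Arzel\`a--Ascoli theorem, since the family $\{\Uab f : \norm f \le 1\}$ is equicontinuous, as noted above, and uniformly bounded --- the inequality $\Eg_t e^{-t\qg}\le e^{-t}$ gives $\norm{\Uab f}\le\norm f$. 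Positivity is immediate: when $f\ge 0$ the integrand in $\E_x(\int_0^{\sab} f(X_t)\Eg_t e^{-t\qg}\,dt)$ is non-negative, so $\Uab f\ge 0$.

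\textbf{Strong positivity and part (i).} This is the crux, and it is where the assumption that $(a,b)$ is a good interval enters. I would prove that $\Uab$ maps every non-zero $f\in\Chatabp$ to a function strictly positive on $[a,b)$. Given such an $f$, continuity furnishes an open subinterval $(c,d)\subset(a,b)$ and $\eta>0$ with $f\ge\eta$ on $(c,d)$. Fix $x\in[a,b)$ (for $x=a$ the flow enters $(a,b)$ instantly). Since $(a,b)$ is good, $\p_x(\Hit y<\sab)>0$ for $y$ in some subinterval of $(c,d)$; moreover, because between jumps the flow is strictly increasing and continuous, on the event that $X$ reaches such a $y$ and then runs along the flow without jumping for a short deterministic time --- an event of positive probability, the total jump rate being bounded --- the path stays inside $(c,d)$ over a time interval of positive length. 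On that event $\int_0^{\sab} f(X_t)\Eg_t e^{-t\qg}\,dt>0$, hence $\Uab f(x)>0$. Applying the same estimate to a fixed bump function $u\in\Chatabp$ of small enough support yields $\Uab u\ge\delta\,u$ on $[a,b)$ for some $\delta>0$; iterating gives $\norm{\Uab^n}\ge\delta^n$, so $r(a,b)\ge\delta>0$, which is (i).

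\textbf{Parts (ii) and (iii).} With compactness, positivity and strong positivity in hand, the Krein--Rutman theorem provides a strictly positive eigenfunction $\hab\in\Chatab$ with $\Uab\hab=r(a,b)\hab$ and, in its strong form, the simplicity of $r(a,b)$ together with the spectral gap (iii). For the eigen-measure, the dual of $\Chatab$ is the space of finite Borel measures on $[a,b]$ with no atom at $b$, on which $\Uabd$ is again compact (adjoint of a compact operator) and positive; applying the Krein--Rutman theorem to $\Uabd$ produces a non-zero positive measure $\nuab$ with $\Uabd\nuab=r(a,b)\nuab$, the eigenvalue being $r(a,b)$ because an operator and its adjoint share the same spectral radius. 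Since $\hab$ is strictly positive on $[a,b)$ and $\nuab$ is a non-zero positive measure, one has $\langle\nuab,\hab\rangle>0$, so after rescaling $\nuab$ we may assume $\langle\nuab,\hab\rangle=1$, which completes (ii).

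\textbf{Main obstacle.} I expect the only genuinely delicate step to be the strong-positivity claim: upgrading the qualitative irreducibility of the process killed at $\sab$ into the quantitative statement that $\Uab f(x)>0$ for every non-zero $f\in\Chatabp$ and every $x\in[a,b)$. This requires exploiting the piecewise-deterministic structure --- strictly increasing flow between jumps and bounded total jump rate --- to guarantee that, with positive probability, the process actually spends a positive amount of time inside a prescribed open subinterval before leaving $[a,b]$. Once this is established, the remainder is a routine invocation of the Krein--Rutman machinery, and the equicontinuity needed for compactness is the analogue of Lemma $3.2$ in \cite{BERT18}.
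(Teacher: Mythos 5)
Your proposal is correct and follows essentially the same route as the paper, which simply checks that $\Uab$ is a compact positive operator satisfying the hypotheses of the Krein--Rutman theorem (Theorem $9.5$ in Deimling), with compactness coming from the equicontinuity and the bound $\Eg_t e^{-t\qg}\le e^{-t}$. Your fleshing-out of the strong-positivity step via the goodness of $(a,b)$ is exactly the intended use of that hypothesis (cf.\ the analogous argument in \cite{BERT18}), so nothing further is needed.
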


Thanks to Proposition \ref{kreinrutman}, we can introduce the quantity 
\begin{equation}
\roab \coloneqq \qg - \frac{1}{r(a,b)},
\end{equation}
and we have the following result. The proof follows adapting the one of Lemma $3.4$ in \cite{BERT18}.

\begin{lem}
The process
\begin{equation}
\Mab (t) \coloneqq \mathbf{1}_{\{t < \sab \}} \hab  \Eg_t e^{-t \roab}, \quad t \geq 0
\end{equation}
is a $\p_x$- martingale for every $x \in [a,b]$.
\end{lem}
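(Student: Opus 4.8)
The strategy is to recognise $\hab$ as an eigenfunction of the Feynman--Kac semigroup of $X$ killed on exiting $[a,b]$, and then to upgrade the resulting identity to the martingale property by exploiting multiplicativity. For bounded measurable $f : [a,b] \to \R$ and $t \geq 0$, put
\begin{equation}
Q_t f(x) \coloneqq \E_x\!\left( \mathbf 1_{\{t < \sab\}}\, f(X_t)\, \Eg_t\, e^{-t\qg} \right), \qquad x \in [a,b].
\end{equation}
Since $\Eg$ and $t \mapsto e^{-t\qg}$ are multiplicative along the trajectory and first exit times satisfy $\sab = t + \sab \circ \theta_t$ on $\{t < \sab\}$ (with $\theta$ the shift operator), the Markov property of $X$ shows that $(Q_t)_{t \geq 0}$ enjoys the semigroup property, and Fubini's theorem identifies its potential operator with $\Uab$, namely $\Uab = \int_0^\infty Q_t\, dt$. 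The bound $\Eg_t e^{-t\qg} \leq e^{-t}$ makes $t \mapsto Q_t \hab(x)$ continuous (dominated convergence) and integrable on $[0,\infty)$.

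First I would convert the Krein--Rutman relation $\Uab \hab = r(a,b)\hab$ into a spectral identity for $(Q_t)$. The semigroup property gives, for every $t \geq 0$,
\begin{equation}
Q_t\, \Uab \hab \;=\; \int_0^\infty Q_{t+s}\hab\, ds \;=\; \int_t^\infty Q_u \hab\, du,
\end{equation}
while the eigenvalue equation gives $Q_t\, \Uab \hab = r(a,b)\, Q_t\hab$. Writing $\phi(t) \coloneqq Q_t \hab$, these combine to $r(a,b)\, \phi(t) = \int_t^\infty \phi(u)\, du$; since $\phi$ is continuous with $\phi(0) = \hab$ and $r(a,b) > 0$ by Proposition~\ref{kreinrutman}, the right-hand side is continuously differentiable in $t$, so $r(a,b)\, \phi'(t) = -\phi(t)$ and therefore
\begin{equation}
Q_t \hab \;=\; e^{-t/r(a,b)}\, \hab, \qquad t \geq 0.
\end{equation}
Recalling $\roab = \qg - 1/r(a,b)$ and multiplying through by $e^{t/r(a,b)}$, this says exactly that $\E_x\!\big( \mathbf 1_{\{t < \sab\}} \hab(X_t)\, \Eg_t\, e^{-t\roab} \big) = \hab(x)$ for all $x \in [a,b]$, i.e.\ $\E_x\big(\Mab(t)\big) = \hab(x) = \Mab(0)$ for every $x \in [a,b]$; here I use that $\sab > 0$ $\p_x$-a.s.\ for $x \in [a,b)$ because the flow is strictly increasing and the jump rate is bounded, while $\hab(b) = 0$ and $b$ is an exit boundary, so the identity holds trivially at $b$.

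It then remains to promote $\E_x(\Mab(t)) = \hab(x)$ to the martingale property with respect to $(\mc F_t)_{t \geq 0}$. Integrability holds for each fixed $t$ since $\Mab(t) \leq \norm{\hab}\, \Eg_t\, e^{-t\roab} \leq \norm{\hab}\, e^{t(1/r(a,b) - 1)}$. For $s, t \geq 0$, the multiplicativity relations $\Eg_{t+s} = \Eg_t \cdot (\Eg_s \circ \theta_t)$, $e^{-(t+s)\roab} = e^{-t\roab} e^{-s\roab}$ and $\mathbf 1_{\{t+s < \sab\}} = \mathbf 1_{\{t < \sab\}} \cdot (\mathbf 1_{\{s < \sab\}} \circ \theta_t)$ on $\{t < \sab\}$, together with the Markov property at time $t$, yield
\begin{equation}
\E_x\!\big( \Mab(t+s) \mid \mc F_t \big) \;=\; \mathbf 1_{\{t < \sab\}}\, \Eg_t\, e^{-t\roab}\, \E_{X_t}\!\big( \Mab(s) \big) \;=\; \mathbf 1_{\{t < \sab\}}\, \Eg_t\, e^{-t\roab}\, \hab(X_t) \;=\; \Mab(t),
\end{equation}
using $\E_y(\Mab(s)) = \hab(y)$ for $y \in [a,b]$ from the previous step and the fact that $X_t \in [a,b]$ on $\{t < \sab\}$. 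I expect the main obstacle to be the second step, that is, making the passage from the potential eigen-equation to the semigroup eigen-equation rigorous: one must justify the continuity (hence differentiability) of $t \mapsto \phi(t)$ and correctly treat the exit boundary $b$. The first and third steps are otherwise routine manipulations of the (strong) Markov property, along the lines of the proof of Lemma~$3.4$ in \cite{BERT18}.
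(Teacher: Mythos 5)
Your proof is correct and follows essentially the same route as the paper, which simply defers to Lemma $3.4$ of \cite{BERT18}: there too one passes from the Krein--Rutman eigen-equation for the potential operator $\Uab$ to the semigroup identity $Q_t\hab = e^{-t/r(a,b)}\hab$ for the killed Feynman--Kac semigroup and then concludes via the Markov property and multiplicativity. The two points you flag (continuity of $t\mapsto Q_t\hab$, forced in any case by the integral equation $r(a,b)\,\phi(t)=\int_t^\infty\phi(u)\,du$, and the exit boundary $b$ where $\hab(b)=0$) are handled exactly as you suggest.
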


\section{Characterisation of the Malthusian behaviour}
\label{Section3}

In this section we prove some first important results. The first goal is to establish existence and uniqueness of the growth-fragmentation semigroup and to derive the Feynman-Kac representation \eqref{FKrep}. The second one is to prove Theorem \ref{Theorem1}, that gives necessary and sufficient conditions for the Malthusian behaviour \eqref{convergenceofthesemigroup}, in terms of the Laplace transform $L$ and the Malthus exponent $\lambda$ defined respectively in \eqref{Lxy} and \eqref{definitionoflambda}.

We start by introducing some notation. For $x \geq 0$ and $t \geq 0$, we denote by $\phi(x,t)$, the flow given by the solution to the differential equation
\begin{equation}
\label{definitiondiffequationtau}
\begin{cases}
\text{d}\phi(t, x) = \tau(\phi(t,x))dt, \\
\phi(0,x)= x,
\end{cases}
\end{equation}
that exists and is unique for all $x \geq 0$ thanks to \eqref{tauLipschitz}. For $0 \leq x<y$, we denote by $s(x,y)$ the time that $\phi$ needs to travel from $x$ to $y$, namely
\begin{equation}
\phi(s(x,y), x)=y.
\end{equation}
Note that $s(\cdot, \cdot)$ is decreasing in the first variable and increasing in the second one. 
\begin{oss}
\label{remarkentrancenadnaturalboundary}
There is the explicit expression
\begin{align}
    s(x, y) = \int_x^y \frac{1}{\tau(z)} dz.
\end{align}
Comparing this to \eqref{definitionofT}, we see that $T= s(0,1)$. When \eqref{Tfinite} holds, the solution with initial condition $x(0)=0$ can \textit{enter from $0$ in finite time}. On the contrary, when \eqref{Tfinite} fails (for example in the cases analysed in \cite{BERT18, BW18, BC19}), the solution to \eqref{definitiondiffequationtau} with initial condition $x(0)=0$ is $\phi(t, 0)=0$ for all $t \geq0$. On the other hand, \eqref{Timetoinfinity} ensures that the solution cannot explode in finite time.
\end{oss}

\subsection{Existence and uniqueness of the semigroup}
The goal of this subsection is to prove the existence and uniqueness of a semigroup generated by $\mc A$.
Let $\Chat$ denote the Banach space of continuous functions $f: [0, \infty) \to \mb R$ vanishing at infinity, endowed with the supremum norm $\norm{\cdot}_{\infty}$. We view the growth-fragmentation operator $\mc A$, defined in \eqref{definitiongfoperator}, as an operator on $\Chat$. Its domain $\mc D (\A)$ contains the space of functions $f \in \Chat$ such that $\tau f' \in \Chat$. 

We also assume the following technical assumption on the fragmentation kernel: for all compact sets $E \subset [0, \infty)$,
\begin{equation}
\label{vagueconvergenceofthekernel}
    \lim_{x \to \infty} B(x) \int_E k(x, y) dy = 0,
\end{equation}
\textit{i.e.}, the rate at which a particle with size $x >0$ produces particles whose sizes are in $E$ tends to $0$ as $x \to \infty$.

\begin{lem}
\label{existenceanduniquenesssemigroupTfinite}
Assume \eqref{tauLipschitz}, \eqref{Bcontinuousbounded}, \eqref{kcontinuous}, \eqref{Nconitnuousbounded}, \eqref{Timetoinfinity}, \eqref{conditionequivalenttoirreducibility} and \eqref{vagueconvergenceofthekernel}. Then, there exists a unique positive strongly continuous semigroup on $\Chat$ whose infinitesimal generator coincides with $\A$ in the space of differentiable functions vanishing at infinity such that $\tau f' \in \Chat$. 
\end{lem}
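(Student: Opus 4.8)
The plan is to realise $\A$ as a bounded perturbation of the generator of the deterministic transport flow, so that existence, strong continuity and uniqueness all follow from the standard bounded-perturbation theory for strongly continuous semigroups, while positivity is read off from a Dyson-Phillips expansion. Write $\mc D$ for the space appearing in the statement, that is, the space of differentiable $f \in \Chat$ with $\tau f' \in \Chat$, and decompose $\A = \mc T + \mc J + \mc V$, where $\mc T f(x) = \tau(x) f'(x)$, $\mc J f(x) = B(x)\int_0^x (f(y)-f(x)) k(x,y)\,dy$ and $\mc V f(x) = B(x)(N(x)-1) f(x)$; note that $\mc T + \mc J = \G$.

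First I would show that $\mc T$ generates a positive contraction semigroup on $\Chat$, namely the transport semigroup $P^0_t f \coloneqq f(\phi(t,\cdot))$, with domain (and hence core) exactly $\mc D$. Here \eqref{tauLipschitz} guarantees that the flow $\phi$ of \eqref{definitiondiffequationtau} is well defined, \eqref{Timetoinfinity} prevents it from reaching $+\infty$ in finite time, the bound $\phi(t,x) \geq x$ shows that $P^0_t$ maps $\Chat$ into itself, uniform continuity of the elements of $\Chat$ gives strong continuity, and the fact that $\tau(0) > 0$ makes $0$ an entrance point, so that no boundary condition is needed there; identifying the generator and its domain is then a routine computation. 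Next I would check that $\mc J$ and $\mc V$ are bounded linear operators on $\Chat$: the operator-norm bound is immediate from $\norm{B}_{\infty}, \norm{N}_{\infty} < \infty$, while the fact that they map $\Chat$ \emph{into} itself relies on the continuity assumptions \eqref{Bcontinuousbounded}, \eqref{kcontinuous}, \eqref{Nconitnuousbounded} and, crucially, on \eqref{vagueconvergenceofthekernel}, which is precisely what forces the gain term $x \mapsto B(x)\int_0^x f(y) k(x,y)\,dy$ to vanish at infinity — first for compactly supported $f$, then for arbitrary $f \in \Chat$ by uniform approximation.

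Given these facts, $\A = \mc T + (\mc J + \mc V)$ is a bounded perturbation of a generator, so the bounded perturbation theorem produces a strongly continuous semigroup $(T_t)_{t \geq 0}$ on $\Chat$ whose generator is $\A$ with domain $\mc D$; in particular $\mc D$ is a core, and uniqueness follows, since any strongly continuous semigroup whose generator extends $\A|_{\mc D}$ must be generated by the closure of $\A|_{\mc D}$ and hence coincide with $(T_t)$. For positivity I would re-split $\A = (\mc T - \mc M) + \mc J_+ + \mc V$, where $\mc M f(x) = B(x)N(x) f(x)$ and $\mc J_+ f(x) = B(x)\int_0^x f(y) k(x,y)\,dy$ are non-negative operators and $\mc V \geq 0$ because $N \geq 1$: the operator $\mc T - \mc M$ generates the positive contraction semigroup $f \mapsto P^0_t f \cdot \exp(-\int_0^t B(\phi(s,\cdot)) N(\phi(s,\cdot))\,ds)$, and $(T_t)$ is recovered from it through the Dyson-Phillips series associated with the positive bounded perturbation $\mc J_+ + \mc V$, all of whose terms are then non-negative. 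Probabilistically, $(T_t)$ is the Feynman-Kac semigroup \eqref{FKrep} of the piecewise-deterministic Markov process with generator $\G$ of \eqref{definitionofg}, whose total jump rate $BN$ is bounded so that jumps never accumulate.

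I expect the main obstacle to be the first step: verifying that the transport part genuinely generates a Feller semigroup on $\Chat$ with $\mc D$ as its domain — this is exactly where $\tau > 0$ on all of $[0,\infty)$ (so that $0$ is an entrance boundary) and \eqref{Timetoinfinity} (no explosion towards $+\infty$) enter — together with the $\Chat$-invariance of the gain operator, which is the sole purpose of \eqref{vagueconvergenceofthekernel}; everything afterwards is routine semigroup perturbation theory. A more computational alternative would be to verify the positive maximum principle for $\A$ on $\mc D$ and to solve the resolvent equation $(\lambda - \A) f = g$ directly — an integral equation whose solution is furnished by the Feynman-Kac formula — but this essentially reproduces the same estimates.
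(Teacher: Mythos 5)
Your argument is correct, and it reaches the conclusion by a genuinely different route from the paper. The paper does not use the bounded perturbation theorem at all: it subtracts the constant $\norm{\bn}_{\infty}$ from $\mc A$ to obtain an operator $\At$ that is the generator of an explicitly constructed \emph{killed} piecewise-deterministic Markov process $Z$ (flow $\phi$, jump rate $BN$, killing rate $\norm{\bn}_{\infty}-B(N-1)\geq 0$); existence of the contraction semigroup comes from this probabilistic construction, positivity is automatic because the semigroup of $Z$ is sub-Markovian, and uniqueness is obtained by verifying the range condition of Theorem 4.1 in Chapter 4 of Ethier--Kurtz. Your decomposition $\A=\mc T+(\mc J+\mc V)$ replaces all of this by linear semigroup theory: the transport semigroup $P^0_t f=f(\phi(t,\cdot))$ plus a bounded perturbation, with positivity recovered from the Dyson--Phillips series after the re-splitting $\A=(\mc T-\mc M)+\mc J_++\mc V$. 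Both approaches rely on \eqref{vagueconvergenceofthekernel} in the same place (invariance of $\Chat$ under the gain operator, equivalently the Feller property of $Z$), and both rely on \eqref{Timetoinfinity} to rule out explosion of the flow. What your route buys is a cleaner, self-contained functional-analytic proof in which uniqueness is immediate once $\mc D$ is known to be the domain (hence a core) of the generator; what the paper's route buys is the process $Z$ itself, which prefigures the Feynman--Kac representation used throughout Section 3. The one step you should not leave as ``routine'' is the identification of the domain of the transport generator with $\mc D$: it is the crux of your uniqueness argument, but it does follow cleanly from the time change $y=s(0,x)=\int_0^x \tau(z)^{-1}dz$, which by \eqref{tauLipschitz} and \eqref{Timetoinfinity} is a $C^1$-diffeomorphism of $[0,\infty)$ conjugating $P^0$ to the translation semigroup on $\Chat$, whose domain is the standard one.
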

\begin{proof}
From \eqref{equationnumberofchildren}, for $x \geq 0 $, $\mc A$ can be written as 
\begin{align}
\A f (x) = \tau(x)f'(x) + B(x)  \int_0^x \left(f(y)- f(x) \right) k(x,y) dy + B(x) \left(N(x) - 1 \right)  f(x) .
\end{align}
We introduce the operator $\At f \coloneqq \A f - \norm{\bn}_{\infty} f$, defined on $\mc D (\At) = \mc D( \mc A)$. Plainly, $\norm{\bn}_{\infty}  - B(N-1) \geq 0$. 
Note that, if one shows that $\tilde{A}$ generates a unique strongly continuous contraction semigroup $(\tilde{T}_t)_{t \geq 0}$ on $\Chat$, then $T_t f \coloneqq \exp (t  \norm{\bn}_{\infty}) \tilde{T}_t  f$ is a positive strongly continuous semigroup on $\Chat$ with infinitesimal generator $\mc A$.

Conversely, let $(T_t)_{t \geq 0}$ be a positive strongly continuous semigroup on $\Chat$ with infinitesimal generator $\mc A$. 
Then, $\tilde{T}_t \coloneqq \exp (- t  \norm{\bn}_{\infty})T_t$ defines a strongly continuous contraction semigroup with infinitesimal generator $\tilde{A}$, since 
\begin{equation}
\norm{\tilde{T}_t f}_{\infty} \leq \exp\{- \norm{\bn}_{\infty}t\} \norm{f}_{\infty}.
\end{equation}
From existence and uniqueness of the semigroup generated by $\tilde{A}$, we will get the uniqueness of $(T_t)_{t \geq 0}$.

To show that the semigroup $(\tilde{T}_t)_{t \geq 0}$ exists, we construct a Markov process, say $(Z_t)_{t \geq 0}$, having generator $\At$ on $\mc D (\At)$. The evolution of $Z$ starting from $x \geq 0$ is the following. 
Consider the functions
\begin{equation}
F(t, x) \coloneqq \exp \left( - \int_0^t B(\phi(s, x))N(\phi(s, x) ds \right) = \exp \left( - \int_x^{\phi(t, x)} \frac{B(z)N(z)}{\tau(z)} dz \right)
\end{equation}
and
\begin{align}
    G(t, x) & \coloneqq 
    \exp \left( - \int_0^t \left(\NormB - B(\phi(s, x))(N-1)(\phi(s, x)) \right) ds \right) \\
    & = \exp \left( - \int_x^{\phi(t, x)} \frac{\left( \NormB - B(z)(N-1)(z) \right)}{\tau(z)} dz \right).
\end{align}
Now select two independent random variables $K_1$ and $S_1$ such that $\p (K_1 > t) = F(t,x)$ and $\p(S_1 > t) = G(t, x)$. Consider also a random variable $P_1$ independent from the others, with distribution 
\begin{equation}
\label{definitionprobaposiitonafterjump}
    \frac{k(\phi(K_1, x), y)}{N(\phi(K_1, x))} dy.
\end{equation}

Let $T_1 = K_1 \wedge S_1$. On the event $T_1 = S_1$, the process is killed, \textit{i.e.,}
\begin{equation}
    Z_t = \begin{cases}
        \phi(t, x) \quad \quad t < T_1 \\
        \partial  \quad \quad \quad \quad \; t \geq T_1.
    \end{cases}
\end{equation}
On the event  $T_1 = K_1$, the trajectory of $Z$ starting from $x \geq 0$ is given by 
\begin{equation}
    Z_t = \begin{cases}
        \phi(t, x) \quad \quad \; t < T_1 \\
        P_1  \quad \quad \quad \quad t = T_1,
    \end{cases}
\end{equation}
and then the dynamics starts afresh from $P_1$. More precisely, we select two independent random variables $K_2$ and $S_2$ such that $\p (K_2> t) = F(t,P_1)$ and $\p(S_2 > t) = G(t, P_1)$. Consider also a random variable $P_2$ independent from the others, with distribution 
\begin{equation}
    \frac{k(\phi(K_2, P_1), y)}{N(\phi(K_2, P_1))} dy.
\end{equation}
Then we define $T_2 = K_2 \wedge S_2$ and, again, if $T_2= S_2$, the process is killed and, otherwise, the dynamics continues in a similar way.
Let $m=\inf \{i \; | \; T_i = S_i \}$. Then we have a piecewise deterministic trajectory $Z_t$ with jump times $T_1, T_1 + T_2, \dotsc,  \sum_{i=1}^{m}T_{i}$ killed at time $K=  \sum_{i=1}^{m}T_{i}$. By construction, $Z$ is a Markov process and it has generator $\At$ on $\mc D (\At)$ (see for example \cite{MD86}).

Uniqueness of the semigroup follows applying Theorem $4.1$ in chapter $4$ of \cite{EK86}, with $\mc A'$ being $\tilde{\mc A}$. Clearly, the set $\overline{\mc D(\mc A')} = \Chat$ is separating and we just need to verify $\overline{\mc R (\lambda - \mc A')}= \overline{\mc D(\mc A')}$. This follows from Lemma $4.2$ and Theorem $4.3$ in chapter $4$ of \cite{EK86}.

We thus proved that there exists a unique positive strongly continuous semigroup $(\tilde{T}_t)_{t \geq 0}$ that has infinitesimal generator $\tilde{A}$, which implies the statement.
\end{proof}

\subsection{A Feynman-Kac representation and Malthusian behaviour}

In this subsection, we establish the Feynman-Kac representation \eqref{FKrep}. First, the same argument used in the proof of Lemma \ref{existenceanduniquenesssemigroupTfinite} shows that the operator $\G$ defined in \eqref{definitionofg}, with domain $\mc D( \mc G) = \mc D( \mc A)$, generates a strongly continuous contraction semigroup on $\Chat$ and, hence, it is the generator of a conservative Feller Markov process $X$ on $[0, \infty)$. From the expression of $\mc G$, we see that $X$ belongs to the class of \textit{piecewise-deterministic Markov processes} introduced by Davis \cite{D84}. Under $\p_x$, any path $t \mapsto X_t$ follows the deterministic flow $\phi(t,x)$ defined in \eqref{definitiondiffequationtau} up to a random time at which it makes its first (random) jump. When the jump occurs, the position after it, say $y$, is chosen accordingly to \eqref{definitionprobaposiitonafterjump} and the dynamics starts afresh from $y$. Note that $X$ has only negative jumps, as it is clear from the definition of the jump kernel \eqref{definitionprobaposiitonafterjump}. Moreover, \eqref{Bcontinuousbounded} and \eqref{Nconitnuousbounded} ensure that the jumps of $X$ never accumulate.

Note further that, by \eqref{Timetoinfinity}, the process cannot reach $\infty$ in finite time. On the contrary, by \eqref{Tfinite}, $0$ is an \textit{entrance boundary} for $X$. As stated in the Introduction, we assume that $X$ is irreducible in $\zeroinf$. Since $\tau$ is positive and $X$ has only negative jumps, this is equivalent to \eqref{conditionequivalenttoirreducibility}. For the proof, we refer to Lemma $3.1$ in \cite{BERT18}.

\begin{oss}
\label{Remarkirreducibilityandentrancepoint}
We stress that $X$ is not irreducible in $[0, \infty)$. In fact, the process started at $0$ can hit any target point $y >0$ with positive probability, but the process started at $x >0$ does not hit $0$ in finite time, due to the fact that the the probability that $X$ hits $0$ by a jump is zero together with the fact that the jumps never accumulate and that condition \eqref{Tfinite} holds.
\end{oss}

To sum up, $X$ satisfies the properties \ref{Sec2upwardskip}-\ref{Sec2positiverettimes}. Moreover, $B(x)(N(x)-1)$ is continuous and bounded and so, we can rely on the results presented in Section \ref{Section2}, with the choice $g(x)= B(x) \left( N(x) -1 \right)$.  Notice that, in this case, the functional  $\left( \Eg_t \right)_{t \geq 0}$ is exactly the functional $\left( \mc E_t \right)_{t \geq 0}$ defined in \eqref{definitionofE}. The next result is the Feynman-Kac representation of the semigroup.

\begin{lem}
\label{lemmaFK}
The growth-fragmentation semigroup can be expressed in the form
\begin{equation}
T_t f (x) = \E_x \left[ \mc E_t f(X_t) \right].
\end{equation}
\end{lem}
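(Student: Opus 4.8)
The plan is to establish the Feynman-Kac representation $T_t f(x) = \E_x[\mc E_t f(X_t)]$ by showing that the right-hand side defines a semigroup whose generator agrees with $\mc A$ on the relevant domain, and then invoking the uniqueness statement from Lemma \ref{existenceanduniquenesssemigroupTfinite}. Concretely, first I would define $\tilde T_t f(x) \coloneqq \E_x[\mc E_t f(X_t)]$ for $f \in \Chat$ and check the elementary properties: since $\mc E_t \leq \exp(t\norm{\bn}_\infty)$, the operator $\tilde T_t$ is bounded on $\Chat$ with $\norm{\tilde T_t} \leq \exp(t\norm{\bn}_\infty)$, it is positive, and the semigroup property $\tilde T_{t+s} = \tilde T_t \tilde T_s$ follows from the Markov property of $X$ together with the multiplicative decomposition $\mc E_{t+s} = \mc E_t \cdot (\mc E_s \circ \theta_t)$, where $\theta_t$ is the shift. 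Strong continuity on $\Chat$ follows from the Feller property of $X$, the right-continuity of paths, and dominated convergence, using that $\mc E_t \to 1$ as $t \to 0$ uniformly on compacts.

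The heart of the argument is the generator computation. For $f \in \mc D(\mc A)$, i.e. $f \in \Chat$ differentiable with $\tau f' \in \Chat$, I would compute $\lim_{t \to 0+} t^{-1}(\tilde T_t f(x) - f(x))$. Writing $\tilde T_t f(x) = \E_x[\mc E_t f(X_t)]$ and using Dynkin's formula for the PDMP $X$ with generator $\mc G$ from \eqref{definitionofg}, together with the fact that $\mc E_t = 1 + \int_0^t \mc E_s B(X_s)(N(X_s)-1)\,ds$, an integration-by-parts / product-rule argument at the level of expectations gives
\begin{equation}
\tilde T_t f(x) = f(x) + \int_0^t \E_x\left[\mc E_s\left(\mc G f(X_s) + B(X_s)(N(X_s)-1)f(X_s)\right)\right]ds.
\end{equation}
Comparing \eqref{definitionofg} with \eqref{definitiongfoperator} and using \eqref{equationnumberofchildren}, the integrand is exactly $\mc E_s \mc A f(X_s)$, since $\mc G f + B(N-1)f = \tau f' + B\int_0^\cdot(f(y)-f(\cdot))k(\cdot,y)dy + B(N-1)f = \tau f' + B\int_0^\cdot f(y)k(\cdot,y)dy - Bf = \mc A f$. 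Dividing by $t$ and letting $t \to 0+$, and using continuity of $s \mapsto \E_x[\mc E_s \mc A f(X_s)]$ at $s=0$ (which holds because $\mc A f \in \Chat$ and $X$ is Feller), we obtain that the generator of $(\tilde T_t)$ extends $\mc A$ on $\mc D(\mc A)$.

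Finally, I would invoke uniqueness: by Lemma \ref{existenceanduniquenesssemigroupTfinite} there is exactly one positive strongly continuous semigroup on $\Chat$ whose generator coincides with $\mc A$ on differentiable functions vanishing at infinity with $\tau f' \in \Chat$; since $(\tilde T_t)$ is such a semigroup and $(T_t)$ is by definition this unique semigroup, $\tilde T_t = T_t$ for all $t$, which is the claimed identity. The main obstacle I anticipate is making the generator computation fully rigorous: justifying the interchange of limit and integral in the Dynkin-type identity and controlling the boundary behaviour at $0$ (where the PDMP can enter from $0$ in finite time under \eqref{Tfinite}), so that the formula holds on all of $\Chat$ including near $x = 0$. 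One clean way around this is to avoid computing the generator directly and instead verify that $t \mapsto \tilde T_t f$ solves the same abstract Cauchy problem — checking that $\mc E_t f(X_t) - \int_0^t \mc E_s \mc A f(X_s)\,ds$ is a $\p_x$-martingale for $f \in \mc D(\mc A)$ via the PDMP martingale problem of Davis \cite{D84} — and then appealing to uniqueness of the semigroup as above; this sidesteps delicate domain issues at the boundary.
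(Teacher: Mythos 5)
Your proposal is correct and follows essentially the same route as the paper: both establish, via Dynkin's formula for $\mc G$ and integration by parts with the finite-variation functional $\mc E_t$, that $\mc E_t f(X_t) - \int_0^t \mc E_s \mc A f(X_s)\,ds$ is a true martingale, deduce that $\E_x[\mc E_t f(X_t)]$ defines a semigroup generated by $\mc A$, and conclude by the uniqueness in Lemma \ref{existenceanduniquenesssemigroupTfinite}. The "clean way around" you describe at the end is in fact exactly the argument the paper gives.
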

\begin{proof}
Since $\mc G$ is the generator of $X$, from Dynkin's formula, for every $f \in \mc D(\mc G))$,
\begin{equation}
    f(X_t) - \int_0^t \mc G f(X_s) ds, \quad t \geq 0
\end{equation}
is a $\p_x$-martingale for every $x \geq 0$. In addition, $(\mc E_t)_{t \geq 0}$ is a stochastic process with bounded variation and $d\mc E_t = B(X_t)\left(N(X_t)-1 \right)\mc E_t dt$. Thus, it follows from the integration by parts formula for stochastic calculus, that 
\begin{equation}
    \mc E_t f(X_t) - \int_0^t \mc E_s \mc G f(X_s) ds - \int_0^t B(X_s)\left(N(X_s)-1 \right) \mc E_s f(X_s) ds = \mc E_t f(X_t) - \int_0^t \mc E_s \mc A f(X_s) ds
\end{equation}
is a local martingale. Since this local martingale remains bounded on any finite time interval, it is a true martingale (see Theorem $I.51$ in \cite{PROTTER05}). Taking expectations and using Fubini's theorem, we conclude that
\begin{equation}
    \E_x \left( \mc E_t f(X_t) \right) - f(x) = \int_0^t \E_x \left( \mc E_s \mc A f(X_s) \right) ds, 
\end{equation}
which means that $\mc A$ is the generator of the semigroup $\E_x \left( \mc E_t f(X_t) \right)$.
By uniqueness, we get the Feynman-Kac representation.
\end{proof}

We are now ready to state an important result concerning the Malthusian behaviour. To this end, we recall the definition of the function $L_{x,y}$  and the Malthus exponent $\lambda$ introduced respectively in \eqref{Lxy} and \eqref{definitionoflambda}. The following theorem provides necessary and sufficient conditions in terms of the function $L_{x,y}$ for the convergence of $e^{-\lambda t} T_t$ to an asymptotic profile. Moreover, it gives an explicit expression of the latter. 
\begin{thm}
\label{Theorem1}
Assume \eqref{tauLipschitz}, \eqref{Bcontinuousbounded}, \eqref{kcontinuous}, \eqref{Nconitnuousbounded}, \eqref{Timetoinfinity}, \eqref{conditionequivalenttoirreducibility} and  \eqref{vagueconvergenceofthekernel}. 
\begin{enumerate}[label=(\roman*)]
\item \label{Theorem1if}  
If
\begin{equation}
\label{Lequal1andfinitederivative}
L_{x_0, x_0}(\lambda)=1 \quad \text{and} \quad -L'_{x_0, x_0}(\lambda) < \infty, 
\end{equation}
then, the Malthusian behaviour \eqref{convergenceofthesemigroup} holds with $\rho = \lambda$ and $\h$ and $\nu$ defined as in \eqref{definitionofh} and \eqref{definitionofnu}.
\item \label{Theorem1onlyif} Conversely, if \eqref{convergenceofthesemigroup} holds for smoothly compactly supported functions $f$, then \eqref{Lequal1andfinitederivative} holds for $\lambda = \rho$.
\end{enumerate}
\end{thm}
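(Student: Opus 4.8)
The plan is to derive the Malthusian convergence from the martingale/tilting machinery of Section~\ref{Section2} together with a Krein--Rutman-type ergodic argument on good intervals. For part~\ref{Theorem1if}, assumption \eqref{Lequal1andfinitederivative} is exactly \eqref{Lequal1} plus \eqref{Lfinitederivative}, so Lemma~\ref{LemmaMinageneralsetting} gives the martingale $\mc M_t = e^{-\lambda t}\h(X_t)\mc E_t$, and Lemma~\ref{LemmaYinageneralsetting}(i) tells us that the tilted process $Y$ (with law $\pt_x$) is \emph{positive} recurrent on $\zeroinf$, with $\Et_x(H_Y(x)) = -L'_{x,x}(\lambda) < \infty$. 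Using the Feynman--Kac representation (Lemma~\ref{lemmaFK}), rewrite
\begin{equation}
e^{-\lambda t} T_t f(x) = e^{-\lambda t}\E_x\!\left[\mc E_t f(X_t)\right] = \h(x)\,\Et_x\!\left[\frac{f(Y_t)}{\h(Y_t)}\right],
\end{equation}
so the problem reduces to proving that $\Et_x[f(Y_t)/\h(Y_t)] \to \langle \pi, f/\h\rangle$ as $t\to\infty$, where $\pi$ is the (finite, after normalisation) stationary measure of $Y$. The candidate profile $\nu$ in \eqref{definitionofnu} should then be identified with $\h(x)$ times the normalised occupation measure of $Y$ at $x_0$: $\langle \nu, f\rangle = \langle \pi, f/\h\rangle$ with $\pi(dx) = \h(x)\,\nu(dx)$ up to a constant, and the factor $\tau(x)|L'_{x,x}(\lambda)|$ in the denominator of \eqref{definitionofnu} is precisely what comes out of the occupation-measure formula for a piecewise-deterministic process combined with $\Et_x(H_Y(x)) = -L'_{x,x}(\lambda)$.

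\textbf{Key steps.} First I would establish the identity $e^{-\lambda t}T_tf(x) = \h(x)\Et_x[f(Y_t)/\h(Y_t)]$ carefully, checking that $f/\h$ is bounded and continuous on the support of $f$ (here $\h>0$ and continuous on $\zeroinfc$ by Lemma~\ref{Lemmahcontinuous} is used), so the right-hand side makes sense. Second, I would invoke a renewal/regeneration argument: since $Y$ is positive recurrent with finite mean return time to $x_0$, the successive excursions of $Y$ away from $x_0$ form an i.i.d.\ sequence, and the classical renewal theorem (in the form used by Bertoin--Watson, e.g.\ their Section~5, adapted here) yields $\Et_x[\psi(Y_t)] \to \frac{1}{\Et_{x_0}(H_Y(x_0))}\,\Et_{x_0}\!\left[\int_0^{H_Y(x_0)}\psi(Y_s)\,ds\right]$ for bounded continuous $\psi$, provided the excursion length distribution is non-lattice --- which holds because the flow $\phi$ is strictly increasing and smooth, so $H_Y(x_0)$ has an absolutely continuous component. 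Taking $\psi = f/\h$ and unwinding the occupation measure via the change of variables along the deterministic flow gives exactly $\langle \nu, f\rangle$. Third, for part~\ref{Theorem1onlyif}, I would run the implication backwards: if \eqref{convergenceofthesemigroup} holds with exponent $\rho$, apply it to a test function $f$ supported near $x_0$ and use the Feynman--Kac formula together with the strong Markov property at $H(x_0)$ to show $\E_{x_0}[e^{-\rho H(x_0)}\mc E_{H(x_0)}, H(x_0)<\infty] = 1$, i.e.\ $L_{x_0,x_0}(\rho) = 1$; combined with the definition of $\lambda$ and convexity of $L_{x_0,x_0}$ this forces $\rho = \lambda$ and $L_{x_0,x_0}(\lambda)=1$, and a further differentiation/integrability argument (the limit being a finite nonzero constant) gives $-L'_{x_0,x_0}(\lambda)<\infty$.

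\textbf{Main obstacle.} The delicate point is the upgrade from the martingale identity to an actual ergodic limit with a well-identified limit measure. Positive recurrence of $Y$ alone gives convergence of Cesàro averages for free, but pointwise (in $t$) convergence of $\Et_x[f(Y_t)/\h(Y_t)]$ requires a non-lattice/spread-out condition on the return-time law and enough uniform integrability to pass to the limit; verifying the aperiodicity of the regeneration structure for this piecewise-deterministic $Y$ (whose jumps are state-dependent and whose inter-jump motion is deterministic) is where the real work lies. A secondary subtlety is checking that the occupation-measure formula genuinely reproduces the density $1/(\h(x)\tau(x)|L'_{x,x}(\lambda)|)$ rather than some other normalisation: one must track how the tilt by $\mc M$ transforms the jump rates and flow of $X$ into those of $Y$, and then express the Green's measure of $Y$ at $x_0$ in the original $\p$-terms, where the $e^{-\lambda H}\mc E_H$ weighting under excursions produces the $L'_{x,x}(\lambda)$ factor. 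I expect this bookkeeping, rather than any single hard estimate, to be the crux; the exponential rate in \eqref{definitionexponentialconvergence} for the companion Theorems~\ref{Theorem2} and~\ref{Theorem3} will then follow from strengthening positive recurrence to exponential recurrence via Lemma~\ref{LemmaYinageneralsetting}(ii), but that is deferred to Section~\ref{Section4}.
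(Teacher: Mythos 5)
Your part (i) follows essentially the same route as the paper: rewrite $e^{-\lambda t}T_tf(x)=\h(x)\,\Et_x\!\left[f(Y_t)/\h(Y_t)\right]$ via the martingale $\mc M$ and the Feynman--Kac formula, invoke positive recurrence of $Y$ from Lemma~\ref{LemmaYinageneralsetting}, and identify the limit with the normalised occupation measure; the paper does not redo the regeneration/aperiodicity analysis you flag as the main obstacle, but imports the density $q(x_0,y)/(\tau(y)q(y,x_0))$ of the occupation measure from Lemma~$5.2$ of Bertoin--Watson and combines it with $\Et_x(H_Y(x))=-L'_{x,x}(\lambda)$ to get \eqref{definitionofnu}, so your bookkeeping concern is resolved by citation rather than computation. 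For part (ii) you take a genuinely different, more direct route: a renewal-type decomposition of $e^{-\rho t}T_tf$ at the return time $H(x_0)$. The paper instead proves the stronger Lemma~\ref{lemmaonlyifstrongerresult}: it first shows $\alpha\geq\lambda$, builds the supermartingale $\Sta=e^{-\alpha t}\h_{\alpha}(X_t)\mc E_t$ and the associated possibly defective tilted process $Y^{(\alpha)}$, uses the one-sided bounds on $e^{-\alpha t}T_t$ to prove that $Y^{(\alpha)}$ is point- and positive recurrent (Lemma~\ref{lemmaYrhoisrecurrent}), deduces non-defectiveness so that $\Sta$ is a true martingale, and reads off $L_{x,x}(\alpha)=1$ from Lemma~\ref{LemmaYqinageneralsetting} and $-L'_{x,x}(\lambda)<\infty$ from positive recurrence. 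What the paper's detour buys is exactly what your sketch leaves open: in the hitting-time decomposition one must kill the remainder term $e^{-\rho t}\E_{x_0}\!\left[\mc E_t f(X_t),\,H(x_0)>t\right]$ and dominate $e^{-\rho s}T_sf(x_0)$ uniformly in $s$ inside the outer expectation before passing to the limit, and your closing ``differentiation/integrability argument'' for the finiteness of $-L'_{x_0,x_0}(\lambda)$ is not yet an argument; these uniform controls are precisely the content of hypotheses \ref{ass1onlyif}--\ref{ass2onlyif} of Lemma~\ref{lemmaonlyifstrongerresult}, so the converse direction of your proposal needs to be completed along one of these two lines.
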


Fix $x_0>0$ and let $\h(x)$ be as in \eqref{definitionofh}. Under assumption \eqref{Lequal1andfinitederivative}, Lemma \ref{LemmaMinageneralsetting} ensures that the process 
\begin{equation}
\label{DefinitionofM}
\mc M_t= e^{-\lambda t}\mc E_t \frac{\h(X_t)}{\h(X_0)}, \quad \quad t \geq 0
\end{equation}
is a martingale under $\p_x$, $x \geq 0$. Thus, we can use it to tilt the probability measures associated to $X$ in order to obtain a recurrent Markov process $Y= (Y_t)_{t \geq 0}$. As in Section \ref{Section2}, we call $\pt_x$ (resp.\ $\Et_x$) the law (resp.\ the expectation) of the process $X$ condition to start at $X_0=x$, $x \geq 0$. Since $\pt$ is absolutely continuous with respect to $\p$, $Y$ inherits the properties \ref{Sec2upwardskip}-\ref{Sec2positiverettimes}.

\begin{proof}[Proof of Theorem 3.3(i)]
Combining \eqref{FKrep} and \eqref{DefinitionofM},
\begin{align}
\label{FCwithY}
T_t f (x) & = \E_x \left[ \mc E_t f(X_t) \right] = e^{\lambda t} \h(x) \E_x \left[ \mc E_t \frac{\h(X_t)}{\h(X_0)} \frac{f(X_t)}{\h(X_t)} \right] = e^{\lambda t} \h(x) \Et_x \left[\frac{f(Y_t)}{\h(Y_t)} \right].
\end{align}
Since \eqref{Lequal1andfinitederivative} holds, Lemma \ref{LemmaYinageneralsetting} shows that $Y$ is positive recurrent. By standard results, the stationary measure of a recurrent Markov process is given by its occupation measure normalized to be a probability measure. Moreover, since $Y$ is piecewise-deterministic and follows the deterministic flow $dy(t)= \tau(y(t)) dt$ between consecutive jumps, it can be proved that its occupation measure is absolutely continuous with respect to the Lebesgue measure, with a locally integrable and everywhere positive density that is
\begin{equation}
\label{densityofthesdofy}
\frac{q(x_0, y)}{\tau(y) q(y, x_0)}, \quad \quad y>0,
\end{equation}
where $q(x, y) \coloneqq \pt_x (H_Y(y) < H_Y(x))$. For the proof, we refer to Lemma $5.2$ in \cite{BW18}.
Combining \eqref{computaitontotalmassoftheoccupationmeasure}, \eqref{FCwithY} and \eqref{densityofthesdofy}, we can conclude that
\begin{equation}
\lim_{t \to \infty} e^{- \lambda t} T_t f(x) = h(x) \int_0^{\infty} \frac{f(y)}{h(y)} \times \frac{1}{\tau(y)|L'_{y,y}(\lambda)|} dy = h(x) \langle \nu, f \rangle,
\end{equation}
where $\nu(dx)$ is precisely the probability measure defined in \eqref{definitionofnu}.
\end{proof}

\begin{oss}
\label{remarkexpconvergence}
When \eqref{Lbiggerthan1} holds, Lemma \ref{LemmaYinageneralsetting} shows that $Y$ is exponentially recurrent. In this case, Kendall's renewal theorem ensures that the convergence takes place exponentially fast.
\end{oss}

The second part of Theorem \ref{Theorem1} states that \eqref{Lequal1andfinitederivative} is not only sufficient for the Malthusian behaviour \eqref{convergenceofthesemigroup}, but also necessary and, in particular, whenever \eqref{convergenceofthesemigroup} holds, the leading eigenvalue $\rho$ coincides with the Malthus exponent $\lambda$ defined in \eqref{Lequal1andfinitederivative}. We actually prove a stronger result.

\begin{lem}
\label{lemmaonlyifstrongerresult}
Assume \eqref{tauLipschitz}, \eqref{Bcontinuousbounded}, \eqref{Nconitnuousbounded}, \eqref{Timetoinfinity}, \eqref{conditionequivalenttoirreducibility} and \eqref{vagueconvergenceofthekernel}. Suppose that for some $\alpha \in \mathbb{R}$:
\begin{enumerate}[label=(\roman*)]
\item \label{ass1onlyif} there exists $x_1 >0$ and a  continuous function $f: (0, \infty) \to \mathbb{R}_+$ with compact support and $f \not\equiv 0$, such that 
\begin{equation}
\limsup_{t \to \infty} e^{- \alpha t} T_t f(x_1) < \infty,
\end{equation}
\item  \label{ass2onlyif} there exist $x_2 > 0$ and a continuous function  $g: (0, \infty) \to \mathbb{R}_+$ with compact support such that
\begin{equation}
\liminf_{t \to \infty} e^{- \alpha t} T_t g(x_2) > 0.
\end{equation}
\end{enumerate}
Then $\alpha = \lambda$, \eqref{Lequal1andfinitederivative} holds and thus also the Malthusian behaviour \eqref{convergenceofthesemigroup} holds with $h$ and $\nu$ defined as in \eqref{definitionofh} and \eqref{definitionofnu}.
\end{lem}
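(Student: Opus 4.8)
The plan is to pin down the value of $\alpha$, and then the value and right‑derivative of $L_{x_0,x_0}$ at $\lambda$, by studying the Laplace transform $\widehat R_\mu\varphi(x):=\int_0^\infty e^{-\mu t}T_t\varphi(x)\,dt\in[0,\infty]$ of the semigroup, and finally to invoke Theorem~\ref{Theorem1}\ref{Theorem1if}. The backbone is a renewal identity: using the Feynman--Kac representation \eqref{FKrep}, decomposing $T_t\varphi(x)=\E_x[\mc{E}_t\varphi(X_t)]$ first at the hitting time $H(x_0)$ and then at the last visit of $X$ to $x_0$ before time $t$, and applying Tonelli, one obtains, for every continuous $\varphi\geq0$ with $\varphi\not\equiv0$ and compact support in $(0,\infty)$,
\[
\widehat R_\mu\varphi(x)=\widehat\Phi^{x}_{\varphi}(\mu)+L_{x,x_0}(\mu)\,\widehat\Phi_{\varphi}(\mu)\left(1+\sum_{n\geq1}L_{x_0,x_0}(\mu)^n\right),
\]
where $\widehat\Phi^{x}_{\varphi}(\mu)=\int_0^\infty e^{-\mu v}\E_x[\mc{E}_v\varphi(X_v);\,v<H(x_0)]\,dv$ and $\widehat\Phi_{\varphi}:=\widehat\Phi^{x_0}_{\varphi}$. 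By \ref{Sec2irreducible} the quantities $\widehat\Phi^{x}_{\varphi}(\mu)$, $\widehat\Phi_{\varphi}(\mu)$ and $L_{x,x_0}(\mu)$ are strictly positive, while by the definition of $\lambda$ one has $L_{x_0,x_0}(\mu)\geq1$ for $\mu<\lambda$ and $L_{x_0,x_0}(\mu)<1$ for $\mu>\lambda$. Hence $\widehat R_\mu\varphi(x)=+\infty$ whenever $\mu<\lambda$, whereas for $\mu>\lambda$ the geometric series sums and $\widehat R_\mu\varphi(x)=\widehat\Phi^{x}_{\varphi}(\mu)+L_{x,x_0}(\mu)\widehat\Phi_{\varphi}(\mu)/(1-L_{x_0,x_0}(\mu))$.

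I would then feed in three soft estimates. First, \ref{ass1onlyif} gives $\widehat R_\mu f(x_1)=\int_0^\infty e^{-(\mu-\alpha)t}\,e^{-\alpha t}T_tf(x_1)\,dt<\infty$ for every $\mu>\alpha$. Secondly, \ref{ass2onlyif} gives, with $c_0,t_0$ witnessing the $\liminf$, $(\mu-\alpha)\widehat R_\mu g(x_2)\geq\tfrac{c_0}{2}e^{-(\mu-\alpha)t_0}$ for every $\mu>\alpha$, so $\liminf_{\mu\downarrow\alpha}(\mu-\alpha)\widehat R_\mu g(x_2)>0$. Thirdly, writing $g\leq M\mathbf 1_K$ and using $\mathbf 1_K\leq L_{\cdot,x_0}(q)/c_{K,q}$ for a constant $c_{K,q}>0$ (as $L_{\cdot,x_0}(q)$ is continuous and positive), the supermartingale $\mc{S}_t^{(q)}=e^{-qt}L_{X_t,x_0}(q)\mc{E}_t$ of Lemma~\ref{LemmaSinageneralsetting} yields $e^{-qt}T_tg(x)\leq(M/c_{K,q})L_{x,x_0}(q)$ for every $q\geq\lambda$; in particular $e^{-\lambda t}T_tg(x)$ is bounded in $t$, so $\widehat R_\mu g(x)<\infty$ for $\mu>\lambda$, and the renewal identity then forces $\widehat\Phi^{x}_{g}(\mu),\widehat\Phi_{g}(\mu),L_{x,x_0}(\mu)$ to be finite for $\mu>\lambda$ and to have finite limits as $\mu\downarrow\lambda$. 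Granting this: (a) if $\alpha<\lambda$, choosing $\mu\in(\alpha,\lambda)$ makes $\widehat R_\mu f(x_1)$ both infinite (renewal identity) and finite (first estimate), so $\alpha\geq\lambda$; (b) if $\alpha>\lambda$, then $1-L_{x_0,x_0}(\alpha)>0$ and $\widehat R_\mu g(x_2)$ has a finite limit as $\mu\downarrow\alpha$, so $(\mu-\alpha)\widehat R_\mu g(x_2)\to0$, contradicting the second estimate; hence $\alpha=\lambda$; (c) $L_{x_0,x_0}(\lambda)\leq1$ by right‑continuity, and if $L_{x_0,x_0}(\lambda)<1$ the same computation at $\mu\downarrow\lambda$ again gives $(\mu-\lambda)\widehat R_\mu g(x_2)\to0$, a contradiction, so $L_{x_0,x_0}(\lambda)=1$; (d) if in addition $-L'_{x_0,x_0}(\lambda)=\infty$ then $(1-L_{x_0,x_0}(\lambda+\varepsilon))/\varepsilon\to\infty$, whence $\varepsilon\widehat R_{\lambda+\varepsilon}g(x_2)=\varepsilon\widehat\Phi^{x_2}_{g}(\lambda+\varepsilon)+\tfrac{\varepsilon}{1-L_{x_0,x_0}(\lambda+\varepsilon)}L_{x_2,x_0}(\lambda+\varepsilon)\widehat\Phi_{g}(\lambda+\varepsilon)\to0$, contradicting the second estimate once more. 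Therefore $\alpha=\lambda$ and \eqref{Lequal1andfinitederivative} holds, and Theorem~\ref{Theorem1}\ref{Theorem1if} supplies the Malthusian behaviour with $h$ and $\nu$ as in \eqref{definitionofh}--\eqref{definitionofnu}.

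\emph{Main obstacle.} The only non‑routine input above is the finiteness of the limiting excursion transforms $\widehat\Phi^{x}_{\varphi}(\lambda)$ and $\widehat\Phi_{\varphi}(\lambda)$ for $\varphi\in\{f,g\}$, which reduces to showing $\E_x\bigl[\int_0^{H(x_0)}\mc{S}_t^{(\lambda)}\mathbf 1_{\{X_t\in K\}}\,dt\bigr]<\infty$. Via the probability tilt \eqref{definitionofYq} this expectation equals $L_{x,x_0}(\lambda)$ times the Green measure of the compact $K$ for the process $Y^{(\lambda)}$ killed when it next hits $x_0$; that killed process is transient (it is killed at a point), and if $L_{x_0,x_0}(\lambda)<1$ then even $Y^{(\lambda)}$ itself is not point‑recurrent by Lemma~\ref{LemmaYqinageneralsetting}, so in all cases the Green measure of a compact set is finite. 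Making this transience/potential‑theoretic estimate precise — together with checking the Tonelli steps behind the renewal identity, the finiteness of $L_{x,x_0}(\mu)$ whenever $L_{x_0,x_0}(\mu)$ is finite, and the elementary Laplace asymptotics — is where the real work lies.
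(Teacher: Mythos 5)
Your route is genuinely different from the paper's. The paper first deduces $\alpha\geq\lambda$ from assumption \ref{ass1onlyif} alone, then tilts $\p_x$ by the supermartingale $\mc S^{(\alpha)}$ of Lemma \ref{LemmaSinageneralsetting}, shows (Lemma \ref{lemmaYrhoisrecurrent}, adapted from \cite{BERT18}) that \ref{ass1onlyif}--\ref{ass2onlyif} force the tilted process $Y^{(\alpha)}$ to be point-recurrent and positive recurrent, and concludes via Lemmas \ref{LemmaYqinageneralsetting} and \ref{LemmaYinageneralsetting}. You instead run a renewal/Tauberian analysis of the resolvent $\int_0^\infty e^{-\mu t}T_t\varphi(x)\,dt$ in the spirit of \cite{BW18}. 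Your renewal identity is correct (all terms are non-negative, so Tonelli applies; positivity of the excursion transforms requires the small observation that irreducibility together with the bounded jump rate forces $\p_{x_0}(H(y)<H(x_0))>0$), and steps (a) and (b) close: for (b), pick $\mu'\in(\lambda,\alpha)$, where your supermartingale bound makes the resolvent finite, hence every term of the identity finite, and monotonicity in $\mu$ transports this to $\alpha$.

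The genuine gap is exactly where you place it, but your proposed fix does not work as stated. Steps (c) and (d) need finiteness of the excursion transform $\widehat\Phi_g(\lambda)=\E_{x_0}\bigl[\int_0^{H(x_0)}e^{-\lambda v}\mc E_v\, g(X_v)\,dv\bigr]$ (and its analogue started from $x_2$) \emph{at} the critical abscissa, which does not follow from finiteness for every $\mu>\lambda$. The justification ``the process killed at a point is transient, hence the Green measure of a compact is finite'' is a non sequitur: neither transience nor killing bounds the \emph{expected} occupation time of a compact set, and in the regime you must handle in step (d) --- $Y^{(\lambda)}$ point-recurrent but possibly null-recurrent, which is precisely the case $-L'_{x_0,x_0}(\lambda)=\infty$ you are trying to exclude --- the excursion occupation measure is the invariant measure, a priori only $\sigma$-finite. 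What actually saves the argument is the structure of the process: a last-exit decomposition at the points of $K$, the upward-skip-free property, and a uniform lower bound on $\pt_y(H_Y(x_0)<H_Y(y))$ over compacts give the explicit occupation density \eqref{densityofthesdofy} (Lemma $5.2$ in \cite{BW18}), which is locally bounded; this must be established also in the possibly defective case needed for step (c). This is essentially the same hard kernel that the paper delegates to Lemma \ref{lemmaYrhoisrecurrent}; once you supply it, your proof is complete.
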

The argument for proving this result belongs to the same vein as in the proof of \ref{Theorem1if}, with the difference that the role of the martingale $\mc M$ is now played by a family of supermartingales.

By contradiction with Proposition $3.3$ in \cite{BW18}, we have the following result. We refer to the proof of Lemma $2.4$ in \cite{BERT18} for a more extensive argument.
\begin{lem}
Suppose that the assumption \ref{ass1onlyif} of Lemma \ref{lemmaonlyifstrongerresult} holds. Then $\alpha \geq \lambda$.
\end{lem}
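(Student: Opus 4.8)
The plan is to derive a contradiction from assuming $\alpha < \lambda$. The starting point is assumption \ref{ass1onlyif}: there exist $x_1 > 0$ and a continuous, compactly supported, nonnegative $f \not\equiv 0$ with $\limsup_{t \to \infty} e^{-\alpha t} T_t f(x_1) < \infty$. Using the Feynman-Kac representation \eqref{FKrep} and the fact that $X$ is irreducible in $\zeroinf$ and that $0$ is an entrance boundary, I would first promote this bound from the single point $x_1$ to the reference point $x_0$ (or to a whole good interval), exploiting that $X$ started at $x_0$ hits $x_1$ with positive probability in finite time and applying the strong Markov property together with the multiplicativity of $\mc E$. Concretely, $T_{t+H(x_1)} f(x_0) \geq \E_{x_0}[e^{\cdots}\mathbf 1_{\{H(x_1) \le s\}} \cdot (\text{shifted quantity})]$, so a finiteness bound at $x_1$ transfers to a finiteness bound of $e^{-\alpha t} T_t f(x_0)$ up to a multiplicative constant.

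The second step is to connect $e^{-\alpha t} T_t f(x_0)$ to the Laplace transform $L_{x_0,x_0}(\alpha)$. The idea is to restart the process at successive return times to $x_0$. Writing $R_1 = H(x_0) < R_2 < \cdots$ for the return times, the strong Markov property and the multiplicative structure of $\mc E$ give, on the event that $X$ returns to $x_0$ at least $n$ times before $t$, a lower bound for $e^{-\alpha t}T_t f(x_0)$ of order $\big(L_{x_0,x_0}(\alpha)\big)^n$ times a fixed positive quantity coming from one excursion during which $f(X_t)$ is bounded below. If $\alpha < \lambda$, then by definition of $\lambda$ as $\inf\{q : L_{x_0,x_0}(q) < 1\}$ and the monotonicity of $L_{x_0,x_0}$, we have $L_{x_0,x_0}(\alpha) \geq 1$; in fact for $\alpha$ strictly below $\lambda$ we can even find $\alpha' \in (\alpha, \lambda)$ with $L_{x_0,x_0}(\alpha') \ge 1$, and comparing $e^{-\alpha t}$ with $e^{-\alpha' t}$ produces an extra exponentially growing factor $e^{(\alpha'-\alpha)t}$. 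Letting $n \to \infty$ (and $t \to \infty$ along the return times), the product $\big(L_{x_0,x_0}(\alpha)\big)^n$ fails to decay while the prefactor $e^{(\alpha'-\alpha)t}$ blows up, contradicting $\limsup_{t\to\infty} e^{-\alpha t}T_t f(x_0) < \infty$.

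This is where the cited contradiction with Proposition $3.3$ in \cite{BW18} enters: that proposition is precisely the statement that if $L_{x_0,x_0}(q) \ge 1$ (equivalently $q \le \lambda$ in the relevant regime) then $e^{-qt}T_t$ cannot remain bounded in the appropriate sense, so one invokes it with $q = \alpha$ rather than reproving the excursion estimate from scratch; the reference to Lemma $2.4$ in \cite{BERT18} supplies the transfer-of-finiteness argument from $x_1$ to $x_0$ described above. Hence $\alpha \ge \lambda$.

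The main obstacle is the transfer step from $x_1$ to $x_0$: one must be careful that the functional $\mc E$ accumulated during the (random, finite) time to go from $x_0$ to $x_1$ is controlled — this uses that $\mc E_s \le e^{\|B(N-1)\|_\infty s}$ and that $H(x_1) < \infty$ $\p_{x_0}$-almost surely is \emph{not} automatic (only positive probability is guaranteed by irreducibility), so one should condition on $\{H(x_1) \le s\}$ for fixed $s$ with $\p_{x_0}(H(x_1) \le s) > 0$ and absorb the resulting constants. Once the bound holds at $x_0$, the rest is the standard renewal/excursion comparison already developed in \cite{BW18}, so no genuinely new difficulty arises there.
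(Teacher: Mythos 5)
Your strategy is the one the paper itself uses: its proof of this lemma is a bare citation of Proposition $3.3$ in \cite{BW18} (the excursion-decomposition lower bound showing that $\int_0^\infty e^{-qt}T_tf(x)\,dt=\infty$ for every $q<\lambda$ and every suitable $f$), together with the more detailed argument of Lemma $2.4$ in \cite{BERT18}, and you have reconstructed essentially that argument. Two steps in your sketch are not sound as written, however.

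First, the transfer step is oriented backwards. The strong Markov property only produces lower bounds of the form $T_tf(x)\ge\E_x\bigl[\mathbf 1_{\{H(y)\le s\}}\,\mc E_{H(y)}\,T_{t-H(y)}f(y)\bigr]$, so an upper bound at the \emph{target} point $y$ never yields an upper bound at the starting point $x$. Starting at $x_0$ and waiting to hit $x_1$, as you propose, would transfer finiteness from $x_0$ to $x_1$ — the wrong direction. You must start at $x_1$ and wait to hit $x_0$, or better, avoid the transfer entirely: by Proposition $3.1$ in \cite{BW18} the Malthus exponent does not depend on the base point, so you may perform the excursion decomposition at $x_1$ itself. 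Second, the renewal comparison at a fixed time $t$ does not factor cleanly: on $\{R_n\le t\}$ you are left with $T_{t-R_n}f$ evaluated at a random time, the truncation destroys the identity $\E\bigl[e^{-\alpha R_n}\mc E_{R_n}\mathbf 1_{\{R_n\le t\}}\bigr]=L^n$, and a $\limsup$ hypothesis controls large \emph{fixed} times only. Both problems disappear once you integrate in $t$: for $q\in(\alpha,\lambda)$ the hypothesis gives $\int_0^\infty e^{-qt}T_tf(x_1)\,dt<\infty$, whereas the strong Markov property at successive return times gives $\int_0^\infty e^{-qt}T_tf(x_1)\,dt\ge\sum_{n\ge0}L_{x_1,x_1}(q)^n\,\E_{x_1}\bigl[\int_0^{R_1}e^{-qt}\mc E_t f(X_t)\,dt\bigr]=\infty$, since $L_{x_1,x_1}(q)\ge1$ for $q<\lambda$ and the single-excursion term is positive by irreducibility. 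This resolvent formulation is exactly Proposition $3.3$ in \cite{BW18}; with it, your argument closes.
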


Hence, we can refer to \eqref{definitionofhq} and consider the function 
\begin{equation}
    \h_{\alpha}(x) \coloneqq L_{x,x_0}(\alpha), \quad \quad x \geq 0.
\end{equation}

As in \eqref{DefinitionofthesupermartingaleSinthegeneralsetting}, we can define the process
\begin{equation}
\Sta \coloneqq  e^{- \alpha t} \h_{\alpha} (X_t) \mc E_t, \quad \quad t \geq 0
\end{equation}
which is a supermartingale with respect to $\p_x$, $x\geq0$ thanks to Lemma \ref{LemmaSinageneralsetting}. 
In the same way as in Section \ref{Section2}, we can use $\Sta$ to introduce a possibly defective càdlàg Markov process $Y^{(\alpha)} = \left( \Yta \right)_{0 \leq t < \zeta}$, with law $\p^{(\alpha)}$. Since the distribution of $(\Yta)_{0 \leq s \leq t})$ under $\p^{(\alpha)}_x(\cdot \; | \; t < \zeta)$ is absolutely continuous with respect to that of $(X_s)_{0 \leq s \leq t})$ under $\p_x$, then the process it is irreducible on $(0,\infty)$ and $0$ is an entrance boundary. 

In the following we denote $Y \coloneqq Y^{(\alpha)}$. In the next lemma, we show that the the process $Y$ is indeed positive recurrent. The proof follows adapting the ones of Lemma $2.4$ and Corollary $2.5$ in \cite{BERT18} and is left to the reader.

\begin{lem}
\label{lemmaYrhoisrecurrent}
Suppose that assumptions \ref{ass1onlyif} and \ref{ass2onlyif} hold. Then, the process $Y$ is point-recurrent and positive recurrent in $\zeroinf$, that is
\begin{align}
 \Ea_x \left(H_Y(y)\right) < \infty \quad \quad \text{for every} \quad x, y >0,
\end{align}
where $H_Y(y) \coloneqq \inf \{t \in (0, \zeta ) \; : \; Y_t=y \}$ is the hitting time of $y >0$ by the process $Y$, with the convention that $\inf \emptyset = \infty$.
\end{lem}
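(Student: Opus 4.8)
The plan is to mimic the structure of the proof of Theorem \ref{Theorem1}\ref{Theorem1if}, but with the supermartingale $\mc S^{(\alpha)}$ in place of the martingale $\mc M$ and with the two one-sided hypotheses \ref{ass1onlyif}--\ref{ass2onlyif} replacing the two-sided information contained in \eqref{Lequal1andfinitederivative}. First I would record the Feynman--Kac identity in the tilted picture: for a non-negative $f$, writing $\Eg_t = \mc E_t$ and using \eqref{definitionofYq} with $q=\alpha$,
\begin{equation}
\label{FKwithYalpha}
T_t f(x) = \E_x\left[\mc E_t f(X_t)\right] = e^{\alpha t}\,\h_\alpha(x)\,\Ea_x\!\left[\frac{f(Y_t)}{\h_\alpha(Y_t)},\ \zeta > t\right],
\end{equation}
so that the quantity $e^{-\alpha t} T_t f(x)/\h_\alpha(x)$ is exactly the expectation of $f(Y_t)/\h_\alpha(Y_t)$ on $\{\zeta>t\}$ under $\pa_x$. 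Since $\h_\alpha$ is bounded away from $0$ and $\infty$ on compacts of $\zeroinf$ (Lemma $2.2$ in \cite{BERT18}, extended to $0$ via conditioning on the no-jump event $\Lambda_{x,a}$), the hypotheses \ref{ass1onlyif} and \ref{ass2onlyif} translate, after covering the supports of $f$ and $g$ by compacts and using irreducibility of $Y$ on $\zeroinf$, into: there is a point $y_0>0$ and constants $0<c\le C<\infty$ with $c \le \Ea_{y_0}[\mathbf 1_A(Y_t)\mathbf 1_{\{\zeta>t\}}] $ for a suitable compact $A$ (from \ref{ass2onlyif}) and $\limsup_t \Ea_{y_1}[\mathbf 1_{A'}(Y_t)\mathbf 1_{\{\zeta>t\}}]\le C$ (from \ref{ass1onlyif}).

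The core of the argument is then to rule out that $Y$ is defective or null. For non-defectiveness, I would argue as in Lemma $2.4$ of \cite{BERT18}: if $\pa_{y_0}(\zeta = \infty) = 0$, then, because a defective process that leaves every compact before dying forces $\Ea_{y_0}[\mathbf 1_A(Y_t)\mathbf 1_{\{\zeta>t\}}]\to 0$, the lower bound from \ref{ass2onlyif} is violated; hence $\pa_x(\zeta=\infty)>0$, and by irreducibility on $\zeroinf$ together with the $0$-$1$ type dichotomy for the event $\{\text{$Y$ visits $y_0$ infinitely often}\}$ one upgrades this to $\pa_x(\zeta=\infty)=1$ for all $x>0$ and to point-recurrence at every $y>0$. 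Equivalently — and this is the cleanest route — one shows directly that $H_Y(y_0)<\infty$ $\pa_{y_0}$-a.s.; then $\mc S^{(\alpha)}$ stopped at $H(y_0)$ is a uniformly integrable supermartingale whose a.s. limit is $e^{-\alpha H(y_0)}\mc E_{H(y_0)}\h_\alpha(y_0)$, giving $\Ea_{y_0}[\,\cdot\,]=\h_\alpha(y_0)L_{y_0,y_0}(\alpha)\le \h_\alpha(y_0)$, i.e. $L_{y_0,y_0}(\alpha)\le 1$, consistent with $\alpha\ge\lambda$. For positive recurrence I would use hypothesis \ref{ass1onlyif}: by \eqref{FKwithYalpha} the upper bound on $e^{-\alpha t}T_t f$ gives $\limsup_t \pa_{y_1}(Y_t \in A')<\infty$ in the appropriate normalisation, and by Fatou/Cesàro (averaging \eqref{FKwithYalpha} in $t$ and using the renewal/occupation-measure representation of the stationary measure, as in Corollary $2.5$ of \cite{BERT18}) this bounds the total mass of the occupation measure of $Y$ between consecutive visits to $y_0$, i.e. $\Ea_{y_0}(H_Y(y_0))<\infty$; by irreducibility this extends to $\Ea_x(H_Y(y))<\infty$ for all $x,y>0$.

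The main obstacle is the bookkeeping around the possibly finite lifetime $\zeta$: every optional-sampling and monotone-convergence step has to be carried out on the event $\{\zeta>t\}$ or $\{H_Y(y)<\zeta\}$, and one must verify that the supermartingale $\mc S^{(\alpha)}$ does not lose mass ``at infinity'' in a way that would spoil the identities — this is precisely where \ref{ass2onlyif} is needed, to certify that enough mass stays in a compact set so that $Y$ cannot be killed. Once non-defectiveness and the finiteness of the expected return time to $y_0$ are in hand, the remaining claims (point-recurrence, and $\Ea_x(H_Y(y))<\infty$ for all $x,y>0$) follow from the strong Markov property and irreducibility on $\zeroinf$ exactly as in \cite{BERT18}, so I would only sketch them and refer the reader there for the routine details.
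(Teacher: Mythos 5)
Your overall route is the one the paper intends: it simply defers to Lemma $2.4$ and Corollary $2.5$ of \cite{BERT18}, i.e.\ tilt by the supermartingale $\mc S^{(\alpha)}$ via \eqref{definitionofYq}, read off the semigroup in the tilted picture, use the hypotheses to rule out defectiveness and transience of $Y^{(\alpha)}$, and then spread the conclusion to all $x,y>0$ by irreducibility and the strong Markov property. Your tilted Feynman--Kac identity, your bookkeeping on $\{\zeta>t\}$, and your use of \ref{ass2onlyif} to exclude killing and escape to the boundary are all correct and match the intended argument.

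There is, however, one step that would fail as written: you attribute \emph{positive} recurrence to hypothesis \ref{ass1onlyif}, claiming that the upper bound $\limsup_t e^{-\alpha t}T_tf(x_1)<\infty$ ``bounds the total mass of the occupation measure between consecutive visits to $y_0$''. It does not. An upper bound on $\Ea_x\bigl[f(Y_t)/\h_{\alpha}(Y_t),\ \zeta>t\bigr]$ is perfectly compatible with null recurrence, for which the Ces\`aro averages of the occupation of any compact set tend to $0$ while $\Ea_{y_0}\left(H_Y(y_0)\right)=\infty$; no upper bound can exclude this. Positive recurrence must instead come from the \emph{lower} bound \ref{ass2onlyif}: once point-recurrence is established, the ratio-limit/renewal representation of the occupation measure shows that if $\Ea_{y_0}\left(H_Y(y_0)\right)=\infty$ then $\frac{1}{t}\int_0^t \Ea_{x_2}\bigl[g(Y_s)/\h_{\alpha}(Y_s),\ \zeta>s\bigr]\,ds \to 0$, contradicting the strictly positive $\liminf$ in \ref{ass2onlyif}. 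The actual role of \ref{ass1onlyif} is upstream, as in the lemma preceding this one: it forces $\alpha\geq\lambda$, so that $\h_{\alpha}=L_{\cdot,x_0}(\alpha)$ is finite and the supermartingale tilting is legitimate in the first place. With that reallocation (both non-defectiveness and positive recurrence extracted from \ref{ass2onlyif}, and \ref{ass1onlyif} reserved for $\alpha\geq\lambda$), your plan goes through and coincides with the cited argument of \cite{BERT18}.
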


Finally, we are ready to prove the second part of Theorem \ref{Theorem1}.

\begin{proof}[Proof of Theorem 3.3(ii)]
By Lemma \ref{lemmaYrhoisrecurrent}, $Y$ cannot be defective, \textit{i.e.}, $\pa_x \left( \zeta = \infty \right) = 1$. This is equivalent to say that $\Ea_x \left( \Sta \right) = \h_{\alpha}(x)$, which implies that $\mc S^{(\alpha)}$ is a martingale for every $x \geq 0$.
   
Thanks to Lemma \ref{LemmaYqinageneralsetting}, we get that $L_{x,x}(\alpha) = 1$ for every $x\geq0$, \textit{i.e.}, condition \eqref{Lequal1} holds. From this, we see that the function $\h_{\alpha}$ coincides with the function $\h$ defined in the proof of Theorem \ref{Theorem1}\ref{Theorem1if}, the martingale $\mc S^{(\alpha)}$ coincides with $\mc M$ and the process $Y$ is the same as the one defined in the previous section. This implies that since $Y$ is recurrent, condition \eqref{Lfinitederivative} must be satisfied, proving the assertion.
\end{proof}

\section{Proof of the main results}
\label{Section4}
This section is devoted to the proofs of Theorem \ref{Theorem2} and  Theorem \ref{Theorem3}. 

\subsection{Proof of Theorem 1.1}
Remark \ref{remarkexpconvergence} shows that \eqref{Lbiggerthan1} is a sufficient condition for the Malthusian behaviour with exponential convergence \eqref{definitionexponentialconvergence}. Thus, the goal of this section is to show that \eqref{conditionlimsupatinfinity} implies \eqref{Lbiggerthan1}, \textit{i.e.}, that there exist $q \in \mathbb{R}$ and $ x \in \zeroinf$ such that
\begin{equation}
\E_x \left[ e^{-q H(x)} \mc E_{H(x)}, H(x) < \infty  \right] \in (1, \infty).
\end{equation}
This will be proven by decomposing the excursions of $X$ away from its (properly chosen) starting point at certain exit times from (properly chosen) compact sets. Thus, first of all, we will fix a compact interval $[a,b]$, with given $0<a<b$ and, following Section \ref{Section2}, we study the process killed when exiting $[a,b]$. The second step consists in fixing the upper-boundary point $b$ large enough and letting the lower-boundary point $a$ go to $0+$. In these first two steps, the expectations of proper functionals at exit times will be computed with the help of specific martingales and supermartingales. Finally, the statement of the theorem follows putting the previous results together. 

We start by fixing a good interval $(a,b)$ and, following Section \ref{Section2}, we define $\qbn \coloneqq 1 + \NormB$ and introduce the operator on $\Chatab$
\begin{equation}
\Uab f(x) \coloneqq \E_x \left( \int_0^{\sab} f(X_t) \mc E_ t e^{-t \qbn} dt   \right), \quad \quad x \in [a,b].
\end{equation}

By Proposition \ref{kreinrutman}, we have that
\begin{equation}
\Uab \hab = r(a,b) \hab, \quad \Uabd \nuab = r(a,b) \nuab, \quad \text{and} \quad  \langle \nuab, \hab \rangle = 1,
\end{equation}
where $r(a,b) >0$ is the spectral radius, $\hab \in \Chatabp$ is strictly positive $\nuab$ is a finite Borel measure  on $[a,b]$ with no atoms at $b$ .
\begin{lem}
\label{expectationsinsideacompact}
Take any good interval $(a,b)$ and define
\begin{equation}
\roab \coloneqq \qbn - \frac{1}{r(a,b)}.
\end{equation}
\begin{enumerate}[label=(\roman*)]
\item For all $x,y \in (a,b)$, there is the identity
\begin{equation} 
\E_x \left[ \mc E_{H(y)} e^{- \roab H(y)}, H(y) < \sab  \right] = \hab(x) / \hab(y).
\end{equation}
\item If  $(a', b')$ is a good interval with $(a', b') \subset (a,b)$. Then,
\begin{equation}
\rho_{a',b'} < \roab < \lambda.
\end{equation}
\end{enumerate}
\end{lem}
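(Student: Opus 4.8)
The plan is to establish both parts using the martingale $\Mab(t) = \mathbf{1}_{\{t < \sab\}} \hab(X_t) \mc E_t e^{-t\roab}$ from the last lemma of Section \ref{Section2}, together with the supermartingale/martingale machinery already developed. For part (i), I would fix a good interval $(a,b)$ and $x,y \in (a,b)$, and apply the optional stopping theorem to $\Mab$ at the hitting time $H(y)$. Since $H(y) \wedge \sab$ is a bounded (up to truncation) stopping time and $\Mab$ is a genuine martingale, $\E_x[\Mab(H(y) \wedge t)] = \Mab(0) = \hab(x)$; on the event $\{H(y) < \sab\}$ the process $X$ is at $y$ at time $H(y)$, so $\Mab(H(y)) = \hab(y)\mc E_{H(y)} e^{-\roab H(y)}$, while on the complementary event $X$ has exited $[a,b]$ and $\Mab$ vanishes. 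Passing to the limit $t \to \infty$ (using that $\hab$ is bounded and $\mc E_t e^{-t\roab}$ is controlled, together with dominated or monotone convergence after checking uniform integrability via the boundedness of $\mc E_t e^{-t q_{_{BN}}}$) yields $\hab(x) = \hab(y)\,\E_x[\mc E_{H(y)} e^{-\roab H(y)}, H(y) < \sab]$, which is the claimed identity. The continuity and strict positivity of $\hab$ on $[a,b)$ from Proposition \ref{kreinrutman} ensure the ratio is well-defined and finite.

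For part (ii), the strategy is a comparison of excursions across nested good intervals. Given $(a',b') \subset (a,b)$, I would compare the functional $\E_x[\mc E_{H(y)} e^{-qH(y)}, H(y) < \sigma(a',b')]$ with the analogous one for $\sab$: since $\{H(y) < \sigma(a',b')\} \subset \{H(y) < \sab\}$ and the integrand $\mc E_{H(y)} e^{-qH(y)}$ is positive, killing on the smaller set gives a strictly smaller value whenever there is positive probability of exiting $[a',b']$ but reaching $y$ before exiting $[a,b]$ — which holds by irreducibility of the process killed in the good interval $(a,b)$. Using part (i) in both intervals, $\hab(x)/\hab(y) = \E_x[\mc E_{H(y)} e^{-\roab H(y)}, H(y) < \sab]$ and similarly with primes, and exploiting that $q \mapsto \E_x[\mc E_{H(y)} e^{-qH(y)}, H(y) < \sigma]$ is strictly decreasing in $q$ (on the relevant range, since $\mc E_{H(y)} > 0$ and $H(y) > 0$ a.s.\ by \ref{Sec2positiverettimes}), one deduces $\rho_{a',b'} < \roab$: if we had $\rho_{a',b'} \geq \roab$, then evaluating the $(a',b')$ functional at $\roab$ and comparing would force an inequality in the wrong direction, contradicting the strict domination of excursion sets. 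The inequality $\roab < \lambda$ should follow by letting $(a,b)$ exhaust $\zeroinf$ (using Lemma \ref{lemmagoodintervals}): the identity $\E_x[\mc E_{H(y)} e^{-\roab H(y)}, H(y) < \sab] = \hab(x)/\hab(y)$ extended to $x = y$ gives $\E_x[\mc E_{H(x)} e^{-\roab H(x)}, H(x) < \sab] = 1$, and since removing the constraint $H(x) < \sab$ only increases the expectation, $L_{x,x}(\roab) > \E_x[\mc E_{H(x)} e^{-\roab H(x)}, H(x) < \sab] = 1$ (strictly, by irreducibility in $(0,\infty)$ giving positive probability of returning to $x$ only after exiting $[a,b]$), whence $\roab < \lambda$ by the definition of $\lambda$ and monotonicity of $L_{x,x}$.

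The main obstacle I anticipate is the rigorous justification of the optional stopping / limit passage in part (i): one must verify that $\Mab(t \wedge H(y))$ is uniformly integrable, or argue via a monotone truncation, to pass from the finite-horizon identity to the infinite-horizon one. This requires controlling $\mc E_t e^{-\roab t}$, which is not bounded by $1$ in general since $\roab$ may be smaller than $\norm{g}_\infty$; however, on the event $\{t < \sab\}$ the process stays in the compact $[a,b]$ where $B(N-1)$ is bounded, and the exit time $\sab$ has exponential moments (the number of jumps before exit is geometrically dominated, as in the proof of Lemma \ref{Lemmahcontinuous}), so $\Mab(t \wedge H(y))$ is indeed uniformly integrable. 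A secondary delicate point is ensuring the strict inequalities in part (ii): these rest on the good-interval irreducibility hypothesis guaranteeing that the relevant excursion events have strictly positive probability, so that the monotonicity in $q$ is strict rather than merely weak. Both points are handled by adapting the arguments of Lemma $3.4$ in \cite{BERT18}.
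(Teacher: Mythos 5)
Your proposal is correct and follows essentially the same route as the paper: part (i) is the optional stopping argument for the compactly-killed martingale $\Mab$ (the paper simply defers to Lemma \ref{LemmaMinageneralsetting} and the argument of Proposition $3.5$ in \cite{BERT18}, whose content is exactly the limit passage you spell out), and part (ii) is the same comparison of the normalized excursion functionals via the strict inclusion $\{H(x) < \sigma(a',b')\} \subset \{H(x) < \sab\}$, extended to the unkilled functional $L_{x,x}$ to get $\roab < \lambda$. You also correctly identify the one genuinely delicate point, namely the uniform integrability needed to pass to the limit in the optional stopping identity.
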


\begin{proof}
\begin{enumerate}[label=(\roman*)]
\item It follows from Lemma \ref{LemmaMinageneralsetting}, using the argument in the proof of Proposition $3.5$ in \cite{BERT18}.
\item From $(i)$,  for all $x \in (a,b)$,
$$\E_x \left[ \mc E_{H(x)} e^{- \roab H(x)}, H(x) < \sab  \right] = 1.$$ The assertion follows than noticing that if $(a', b') \subset (a,b)$, then  $\{H(x) < \sigma(a', b') \} \subset \{H(x) < \sab \}$ and so $\p(H(x) < \sigma(a', b')) < \p(H(x) < \sab)$.
\end{enumerate}
\end{proof}

We now fix the upper-boundary point $b$ and let the lower-boundary point $a$ tend to $0$. Note that since $X$ is upward skip free, then, for all $x<b$, $\lim_{a \to 0} \sab = H(b)$ $\p_x$-almost surely. Now choose $a'$, with $0< a < a' < b$ such that
\begin{equation}
\label{conditionsceltadiaprime}
\sup_{x \in [0,a')} B(x)(N(x)-1) - \roab < - \frac{\log (1-p_{a'})}{s(0,a')},
\end{equation}
where $p_{a'}$ is the probability that the process started from $0$ reaches $a'$ without making any jump and $s(0, a')$ is defined as in Remark \ref{remarkentrancenadnaturalboundary}. As shown in Lemma \ref{Lemmahcontinuous}, this condition is clearly satisfied for $a'$ small enough, since, when $a'$ tends to $0$, $s(0, a')$ tends to $0$, while $- \log (1-p_{a'})$ tends to $+\infty$. 

\begin{prop}
\label{Propositionboundarygoestozero}
For every $a' \in (a,b)$ satisfying \eqref{conditionsceltadiaprime} and every $b'' \in (a',b)$ sufficiently close to $b$, there exists $\gamma < \lambda$ with 
\begin{equation} 
\E_{a'} \left[ \mc E_{H(a')} e^{- \gamma H(a')}, H(a') < H(b'')  \right] \in (0,1].
\end{equation}
\end{prop}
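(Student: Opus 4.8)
The plan is to study the function $F(\gamma):=\E_{a'}\left[\mc{E}_{H(a')}\,e^{-\gamma H(a')},\,H(a')<H(b'')\right]$, $\gamma\in\mathbb R$. It is the Laplace transform of the positive measure $A\mapsto\E_{a'}[\mc{E}_{H(a')}\mathbf 1_{\{H(a')\in A\}},\,H(a')<H(b'')]$ on $(0,\infty)$, hence non-increasing and convex, its finiteness set is a half-line, and it is continuous on the interior of that half-line; moreover $F(\gamma)>0$ for all $\gamma$, since $\p_{a'}(H(a')<H(b''))>0$ by irreducibility. So it suffices to find $\gamma<\lambda$ with $F(\gamma)\le1$, and I will do this by proving (i) $F(\lambda)<1$ and (ii) $F$ is finite on $(\roab,\lambda)$, a non-empty interval since $\roab<\lambda$ by Lemma~\ref{expectationsinsideacompact}. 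Granting (i)--(ii), the finiteness half-line of $F$ contains $(\roab,\lambda)$, so $\lambda$ is interior to it, $F$ is continuous at $\lambda$, and therefore $F(\gamma)<1$ for every $\gamma<\lambda$ close enough to $\lambda$, as required.

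For (i), since $\lambda$ does not depend on the reference point, $L_{a',a'}(q)<1$ for $q>\lambda$, and right-continuity of $L_{a',a'}$ gives $L_{a',a'}(\lambda)\le1$. Now $F(\lambda)=L_{a',a'}(\lambda)-\E_{a'}[\mc{E}_{H(a')}e^{-\lambda H(a')},\,H(b'')\le H(a')<\infty]$, and $X$ started at $a'$ reaches $b''$ by flowing upward without jumping and then, by irreducibility, returns to $a'$ with positive probability, so $\p_{a'}(H(b'')<H(a')<\infty)>0$; hence the subtracted term is strictly positive and $F(\lambda)<1$.

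Step (ii) is the heart of the argument and the place where hypothesis~\eqref{conditionsceltadiaprime} enters. First, the geometric-domination estimate from the proof of Lemma~\ref{Lemmahcontinuous}, applied with $(\roab,a')$ in the roles of $(\lambda,\varepsilon)$ there, turns~\eqref{conditionsceltadiaprime} into $L_{0,a'}(\roab)<\infty$; and since $\sup_{x\in[0,a)}B(x)(N(x)-1)\le\sup_{x\in[0,a')}B(x)(N(x)-1)$ while $-\log(1-p_a)/s(0,a)\ge-\log(1-p_{a'})/s(0,a')$, condition~\eqref{conditionsceltadiaprime} for $a'$ forces the analogous inequality for $a$, so the same estimate on $(0,a)$ gives $\sup_{\xi\in(0,a)}L_{\xi,a}(\roab)<\infty$. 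Combining these through the factorisations $L_{0,a'}(\roab)=L_{0,a}(\roab)L_{a,a'}(\roab)$ and $L_{\xi,a'}(\gamma)=L_{\xi,a}(\gamma)L_{a,a'}(\gamma)$ (strong Markov at $H(a)$, valid since $0<\xi<a<a'$) yields a finite constant $M:=\sup_{\xi\in(0,a)}L_{\xi,a'}(\roab)<\infty$, with $L_{\xi,a'}(\gamma)\le M$ for all such $\xi$ and all $\gamma\ge\roab$. Next, fix $\gamma\in(\roab,\lambda)$ and decompose the excursion of $X$ from $a'$ at the first passage $\sigma_1:=\inf\{t>0:X_t<a\}$ below $a$. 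On $\{H(a')<\sigma_1\}\cap\{H(a')<H(b'')\}$ the excursion stays in $[a,b'')\subseteq[a,b]$, so this event lies in $\{H(a')<\sab\}$ and contributes at most $\E_{a'}[\mc{E}_{H(a')}e^{-\gamma H(a')},\,H(a')<\sab]\le1$ by Lemma~\ref{expectationsinsideacompact}(i). On $\{\sigma_1<H(a')\wedge H(b'')\}$ one has $\sigma_1=\sab$ and $X_{\sigma_1}\in(0,a)$, and since a trajectory started below $a'$ reaches $b''$ only after visiting $a'$, the post-$\sigma_1$ contribution equals $L_{X_{\sigma_1},a'}(\gamma)\le M$; the strong Markov property at $\sigma_1$ then bounds this part by $M\,\E_{a'}[\mc{E}_{\sab}\,e^{-\gamma\sab},\,\sab<H(a')]$, which is finite for $\gamma>\roab$ (for instance from $\mc{E}_te^{-\gamma t}=1+\int_0^t(B(X_s)(N(X_s)-1)-\gamma)\,\mc{E}_se^{-\gamma s}\,ds$ together with $\E_{a'}\left[\int_0^{\sab}\mc{E}_se^{-\gamma s}\,ds\right]<\infty$, the latter being the value at rate $\gamma>\roab$ of a bounded resolvent-type operator built from $\Uab$, cf.\ Section~\ref{Section2}). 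Hence $F(\gamma)<\infty$ for every $\gamma\in(\roab,\lambda)$, which is (ii).

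I expect the main obstacle to be step (ii): the excursions of $X$ away from $a'$ may re-enter $(0,a)$ and be carried arbitrarily close to the entrance boundary $0$, and one must control the Laplace transforms of the hitting time of $a'$ \emph{uniformly} over all re-entry points --- which is precisely what~\eqref{conditionsceltadiaprime} provides (without it, $F$ could be infinite throughout $(-\infty,\lambda)$, leaving no admissible $\gamma$). A secondary technicality is the finiteness of $\E_{a'}[\mc{E}_{\sab}e^{-\gamma\sab},\,\sab<H(a')]$ for $\gamma>\roab$, which I would deduce from the mapping properties of the Krein--Rutman operator $\Uab$ of Section~\ref{Section2}.
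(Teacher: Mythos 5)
Your proof is correct and follows essentially the same route as the paper's: you analyse the same convex, non-increasing Laplace transform, split the excursion from $a'$ at its first passage into the region near $0$, use \eqref{conditionsceltadiaprime} via the geometric-domination argument of Lemma \ref{Lemmahcontinuous} to bound $L_{\xi,a'}$ uniformly over the re-entry points, and use the Krein--Rutman quantity $\roab$ to control the portion of the excursion inside the compact. The only inessential differences are that you cut at the first passage below $a$ rather than below $a'$, bound the exit functional through a resolvent/Neumann-series argument at rate $\gamma>\roab$ instead of the martingale $\Mab$ and the jump compensator, and make explicit the strict inequality $F(\lambda)<1$ that the paper's concluding case analysis implicitly relies on.
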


\begin{proof}
Since $(a,b)$ is a good interval, the irreducibility of the process killed when exiting $[a,b]$ implies that $p_{a'}(H(a') < H(b'')) > 0$, provided that $b''$ is chosen close enough to $b$. Then we consider the convex and non-increasing function $\Psi : \mathbb{R} \to (0, \infty]$ defined by

\begin{equation}
\Psi(q) \coloneqq \E_{a'} \left[ \mc E_{H(a')} e^{- q H(a')}, H(a') < H(b'')  \right].
\end{equation}
We already know that $\Psi (\lambda) \leq L_{a',a'}(\lambda) = 1$. Since $\roab < \lambda$, we can choose $r \in (\roab, \lambda)$. All we need to check, is that $\Psi(r) < \infty$. In fact, if $\Psi(r) \leq 1$, we choose $\gamma = r$, otherwise equation $\Psi(q) =1$ has a unique solution $\gamma \in (r, \lambda)$ by convexity.

On the event $\{H(a') < H(b'') \}$, the process remains in $[a', b'']$ until it makes a jump below $a'$ at time $\sigma(a', b'') $ and then it stays in $(0, a')$ until $H(a')$, when it hits $a'$ for the first time. 
From the Markov property, we can decompose the excursion away from $a'$ at $\sigma(a', b'')$, 
\begin{align}
 \Psi (r) & = \E_{a'} \left[ \mc E_{H(a')} e^{- r H(a')}, H(a') < H(b'')  \right] \\ & =  \E_{a'} \left[ \E \left[  \mc E_{H(a')} e^{- r H(a')}, H(a') < H(b'')   \big| \mc F_{\sigma(a', b'')} \right]\right] \\
 &= \E_{a'} \left[ \mc E_{\sigma(a', b'')} e^{- r \sigma(a', b'')} \mathbf{1}_{\{\sigma(a', b'') < H(b'')\}}  \; \E_{\sigma(a',b'')} \left[  \mc E_{H(a')} e^{- r H(a')}, H(a') < \infty \right]  \right]
\end{align}
The argument in the proof of Lemma \ref{Lemmahcontinuous} shows that, thanks to the proper choice of $a'$ made in \eqref{conditionsceltadiaprime}, 
\begin{equation}
\sup_{x \in [0, a']} \E_{x} \left[  \mc E_{H(a')} e^{- r H(a')}, H(a') < \infty \right] < \infty, 
\end{equation}
and so, there exists $C>0$ such that
\begin{equation}
 \Psi (r) \leq C \cdot \; \E_{a'} \left[ \mc E_{\sigma(a', b'')} e^{- r \sigma(a', b'')}, \; H(a')< H(b'')\right].
\end{equation}
Observe that, on the event $\{H(a') < H(b'') \}$,  there exists an instant $t \leq \sigma (a', b'')$ with  $X_t < a'$ if and only if the process $X$ stays in $[a',b'']$ during the whole time-interval $[0, t)$ and exists from $[a', b'']$ at time $t$ by jumping below $a'$. In other words, $t= \sigma(a', b'')$ and $H(a') < H(b'')$. Moreover, the predictable compensator of the jump process of $X$ is $B(X_{t-}) k(X_{t-}, y) dydt$. From this, we deduce that 
\begin{align}
\E_{a'} \left[ \mc E_{\sigma(a', b'')} e^{- r \sigma(a', b'')}, \; H(a')< H(b'')\right] & = \E_{a'} \left( \int_0^{\sigma(a', b'')} \mc E_t e^{-rt} \left( \int_0^{a'} B(X_{t-}) k(X_{t-}, y) dy \right) dt \right) \\
& \leq \norm{BN}_{\infty}  \E_{a'} \left( \int_0^{\sigma(a', b'')} \mc E_t e^{-rt} dt \right),
\end{align}
where $ \norm{BN}_{\infty}$ is the maximal jump rate. To conclude, we notice that
\begin{align}
\E_{a'} \left[ \mc E_t e^{-rt}, t < \sigma(a', b'') \right] & \leq  \frac{ e^{-(r- \roab)t} }{\min_{[a',b'']} \hab}  \E_{a'} \left[ \mc E_t e^{- \roab t} \hab(X_t), t < \sab \right] \\
& = e^{-(r- \roab)t} \frac{\hab(a')}{\min_{[a',b'']} \hab},
\end{align}
where the last equality follows from the fact that $\Mab$ defined above is a martingale and $\E_{a'} \left[ \Mab (0) \right] = \hab(a') $.
Since $\hab$ is strictly positive on $(a,b)$, the second factor is bounded and $r > \roab$ we easily get that
\begin{equation}
\int_0^{\infty} \E_{a'} \left[ \mc E_t e^{-rt}, t < \sigma(a', b'') \right] dt < \infty, 
\end{equation}
which proves the assertion.
\end{proof}

As a corollary, we get the following result. 
\begin{cor}
\label{corollariogesupermartingale}
Under the assumptions of Proposition \ref{Propositionboundarygoestozero}, for $0< x < b''$, we consider the function
\begin{equation}
g(x) \coloneqq \E_{x} \left[  \mc E_{H(a')} e^{- \gamma H(a')}, H(a') < H(b'')\right].
\end{equation}
The process
\begin{equation}
\mc S(t) \coloneqq g(X_t) \mc E_t e^{- \gamma t} \mathbf{1}_{\{ t < H(b'')  \}}, \quad t \geq 0,
\end{equation}
is then a $\p_x$-supermartingale for every $0 \leq x < b''$. 
\end{cor}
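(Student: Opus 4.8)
The plan is to show that $\mc S$ is a supermartingale by combining the strong Markov property of $X$ with the multiplicative structure of the functional $\mc E_t e^{-\gamma t}$, exactly as one proves that a hitting-time Laplace transform yields a (super)martingale. First I would record the measurability: $g$ is finite (indeed bounded by $1$) on $(0,b'')$ by Proposition \ref{Propositionboundarygoestozero} together with the Markov property, which lets one dominate $g(x)$ for general $x\in(0,b'')$ by $g(a')=\Psi(\gamma)\le 1$ after conditioning on the excursion away from $a'$ — so $g$ is a well-defined bounded measurable function on $[0,b'')$, and $\mc S(t)$ is integrable for every $t$. The process $\mc S(t)$ is adapted to the natural filtration $(\mc F_t)_{t\ge 0}$ since $g(X_t)$, $\mc E_t$ and $\mathbf 1_{\{t<H(b'')\}}$ all are.

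Next, for $0\le s\le t$ I would compute $\E_x[\mc S(t)\mid \mc F_s]$. On the event $\{s\ge H(b'')\}$ both $\mc S(s)$ and $\mc S(t)$ vanish, so there is nothing to check. On $\{s<H(b'')\}$, write $\mc S(t)=\mc E_t e^{-\gamma t}g(X_t)\mathbf 1_{\{t<H(b'')\}}$ and use the multiplicativity $\mc E_t=\mc E_s\cdot \mc E_{s,t}$, where $\mc E_{s,t}=\exp(\int_s^t B(X_u)(N(X_u)-1)\,du)$ depends only on the post-$s$ path. Pulling out the $\mc F_s$-measurable factor $\mc E_s e^{-\gamma s}\mathbf 1_{\{s<H(b'')\}}$ and applying the Markov property at time $s$, the conditional expectation becomes $\mc E_s e^{-\gamma s}\mathbf 1_{\{s<H(b'')\}}$ times
\[
\E_{X_s}\!\left[\mc E_{t-s}\,e^{-\gamma(t-s)}\,g(X_{t-s})\,\mathbf 1_{\{t-s<H(b'')\}}\right].
\]
Now I would identify the inner expectation using the definition of $g$ and the strong Markov property at the hitting time $H(a')\wedge H(b'')$: on $\{t-s<H(b'')\}$, either $H(a')>t-s$, in which case one re-expands $g(X_{t-s})$ by conditioning on reaching $a'$ later; or $H(a')\le t-s$, in which case the factor $\mc E_{t-s}e^{-\gamma(t-s)}g(X_{t-s})$ already encodes part of the return to $a'$. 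Putting these together via the strong Markov property at $H(a')$ shows the inner expectation equals $\E_{X_s}[\mc E_{H(a')}e^{-\gamma H(a')},\,H(a')<H(b'')]=g(X_s)$ \emph{provided} the contribution of the paths with $H(a')>t-s$ and $H(a')=\infty$ (i.e. paths that never return to $a'$ before exiting at $b''$, or drift without returning) is discarded; that contribution is non-negative, which is precisely what turns the identity into the supermartingale inequality $\E_x[\mc S(t)\mid\mc F_s]\le \mc S(s)$.

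The one delicate point — the main obstacle — is handling the boundary set $\{H(a')=\infty\}\cap\{H(b'')=\infty\}$, i.e. trajectories that from some points in $(0,a')$ might in principle avoid both $a'$ and $b''$ forever; here one uses that $X$ is irreducible on $(0,\infty)$ and upward skip free, and that $(a,b)$ is a good interval, so that starting from any $x\in(0,b'')$ the process a.s. eventually hits $a'$ or exits through $b''$, and the excursion decomposition of Proposition \ref{Propositionboundarygoestozero} applies uniformly; the quantitative input guaranteeing integrability near $0$ is condition \eqref{conditionsceltadiaprime}, already exploited in the proof of Proposition \ref{Propositionboundarygoestozero}. Once these path properties are in hand, the supermartingale property follows, and the initial value is $\mc S(0)=g(x)$ for $0\le x<b''$, completing the proof.
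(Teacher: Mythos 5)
The paper offers no written proof of this corollary (it is presented as immediate from Proposition \ref{Propositionboundarygoestozero} by adapting the argument of Theorem $4.4$ in \cite{BW18}, as for Lemma \ref{LemmaSinageneralsetting}), so I am comparing your plan against that intended argument. Your overall strategy — Markov property at time $s$, then an excursion decomposition at the hitting time of $a'$ — is the right one, but there is a genuine gap at the decisive step. After reducing to showing
\begin{equation}
\E_{y}\left[ \mc E_u e^{-\gamma u} g(X_u), \; u < H(b'') \right] \;\leq\; g(y), \qquad y < b'',\; u \geq 0,
\end{equation}
you split according to whether $H(a') > u$ or $H(a') \leq u$ and claim the inequality follows by ``discarding a non-negative contribution.'' That mechanism only handles the first regime: on $\{u < H(a') \wedge H(b'')\}$ the Markov property at $u$ converts the left side exactly into $\E_y[\mc E_{H(a')} e^{-\gamma H(a')}, u < H(a') < H(b'')]$, a piece of $g(y)$, with nothing to discard. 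On $\{H(a') \leq u < H(b'')\}$, however, the strong Markov property at $H(a')$ produces the factor $\phi(v) \coloneqq \E_{a'}[\mc E_v e^{-\gamma v} g(X_v), v < H(b'')]$ evaluated at $v = u - H(a')$, and the inequality you need is $\phi(v) \leq 1$ for all $v$ — this is \emph{not} obtained by dropping a positive term. It is here, and only here, that the input $\Psi(\gamma) = g(a') \leq 1$ from Proposition \ref{Propositionboundarygoestozero} must enter: $\phi$ satisfies a renewal-type identity over the successive returns of $X$ to $a'$ before $H(b'')$, and iterating it (using that the jumps do not accumulate, so the remainder after $n$ returns vanishes) bounds $\phi$ by $\sup_n \Psi(\gamma)^n \leq 1$. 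If $\Psi(\gamma)$ exceeded $1$, the statement would be false, yet your argument never invokes $\Psi(\gamma)\le 1$ at this point — you only use it for boundedness of $g$. This is precisely the content of the proof of Theorem $4.4$ in \cite{BW18} that needs to be adapted, and it is missing from your plan.

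A secondary inaccuracy: $g$ is \emph{not} bounded by $1$ on $(0,b'')$, and it cannot be dominated by $g(a')$ by ``conditioning on the excursion away from $a'$.'' For $x < a'$ one has $H(a') < H(b'')$ automatically (upward skip-freeness), and $\mc E_{H(a')} e^{-\gamma H(a')}$ may well exceed $1$ since $\gamma < \lambda \leq \norm{\bn}_\infty$; the correct statement is that $g$ is \emph{bounded} on $[0,b'')$, with the bound on $[0,a']$ coming from \eqref{conditionsceltadiaprime} as in Lemma \ref{Lemmahcontinuous} and the bound on $(a',b'')$ from the martingale $\Mab$. Boundedness is all that is needed for integrability of $\mc S(t)$, so this error is harmless, but it should be corrected.
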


We are now ready to prove Theorem \ref{Theorem2}.
\begin{proof}[Proof of Theorem \ref{Theorem2}]
We pick two good intervals $(a,b)$ and $(a', b')$, with $0< a< a' < b' < b$ sufficiently large such that condition \eqref{conditionsceltadiaprime} is satisfied and 
\begin{equation}
\sup_{[b', \infty)} B(x) \left(N(x)-1 \right) < \roab.
\end{equation}
This is indeed possible thanks to condition \eqref{conditionlimsupatinfinity}. Next, we choose $q$ such that $\max \{ \roab, \gamma \} < q < \lambda$. In particular, 
\begin{equation}
\label{sopraabprime}
B(x) \left(N(x)-1 \right) < q \quad \text{for all} \; x \in [b', \infty).
\end{equation}
We prove that \eqref{Lbiggerthan1} holds with $x= b'$. We let $X$ start from $b'$ and we split the excursions at times $\sigma(b', \infty)$ and $\sigma(a', \infty)$. Clearly, $\p_{b'}$-almost surely, $\sigma(b', \infty) \leq \sigma(a', \infty)$. By \eqref{sopraabprime}, until time $\sigma(b', \infty)$, we have
\begin{equation}
E_{\sigma(b', \infty)} e^{- q \sigma(b', \infty)} \leq 1 \quad \p_{b'}\text{-a.s.},
\end{equation}
and so, from the strong Markov property, the assertion follows from
\begin{equation}
\label{belowb'}
\sup_{x \leq b'} \E_{x} \left[  \mc E_{H(b')} e^{- q H(b')}, H(b') < \infty \right] < \infty.
\end{equation}
First we consider the case $x \leq a'$. As in Proposition \ref{Propositionboundarygoestozero}, condition \eqref{conditionsceltadiaprime} ensures that
\begin{equation}
\sup_{x \in [0, a']} \E_{x} \left[  \mc E_{H(a')} e^{- q H(a')}, H(a') < \infty \right] < \infty, 
\end{equation}
and so, from the Markov property, 
\begin{equation}
\label{supremumsmalleraprimeisfinitepreamble}
\sup_{x \in [0, a']}\E_{x} \left[  \mc E_{H(b')} e^{- q H(b')}, H(b') < \infty \right]  \leq C \cdot \E_{a'} \left[  \mc E_{H(b')} e^{- q H(b')}, H(b') < \infty \right].
\end{equation}
To conclude the case $x \leq a'$, we thus need to show that the RHS is finite.
To this end, we choose $b'' \in (b', b)$ close enough to $b$ to have $\p_{b'} (H(a') < H(b'')) > 0$. As usual this can be done by irreducibility arguments. Recalling the notation of Corollary \ref{corollariogesupermartingale}, we have
\begin{align}
\E_{a'} \left[  \mc E_{H(b')} e^{- q H(b')}, H(b') < \infty \right] \leq \frac{1}{g(b')} \E_{a'} \left[  \mc S_{H(b')} \right] \leq \frac{g(a')}{g(b')}.
\end{align}
Proposition \ref{Propositionboundarygoestozero} ensures that $g(a') \in (0,1]$ and $g(b') > 0$ and so
\begin{align}
\label{supremumsmalleraprimeisfinite}
\sup_{x \in [0, a']}\E_{x} \left[  \mc E_{H(b')} e^{- q H(b')}, H(b') < \infty \right]  \leq \frac{g(a')}{g(b')} < \infty.
\end{align}

Finally, we consider the case $x \in (a', b')$. We distinguish whether the process exits from $[a',b']$ through the upper or the lower boundary. In the first case, Lemma \ref{expectationsinsideacompact} ensures that, for every $x \in [a',b']$,  
\begin{align}
\E_{x} \left[  \mc E_{H(b')} e^{- q H(b')}, H(b') \leq \sigma(a',b') \right] \leq \E_{x} \left[  \mc E_{H(b')} e^{- q H(b')}, H(b') < \sab \right] \leq \frac{\hab(x)}{\hab(b')},
\end{align}
and so,
\begin{align}
\label{insidetheinterval1}
\sup_{x \in [a',b')} \E_{x} \left[  \mc E_{H(b')} e^{- q H(b')}, H(b') \leq \sigma(a',b') \right] \leq \frac{\max_{[a',b']} \hab(x)}{\hab(b')} < \infty.
\end{align}
In the second case, we have that, in a similar way as in Proposition \ref{Propositionboundarygoestozero}, 
\begin{align}
\E_{x} \left[ \mc E_{\sigma(a', b')} e^{- q \sigma(a', b')}, \; \sigma(a',b')< H(b')\right] 
& \leq
\norm{BN}_{\infty}  \E_{x} \left( \int_0^{\sigma(a', b')} \mc E_t e^{-qt} dt \right) \\ 
& \leq \norm{BN}_{\infty}  \frac{\hab(x)}{(q- \roab) \min_{[a',b'']} \hab} < \infty.
\end{align}

Using the Markov property at the stopping time $\sigma(a',b')$ and using \eqref{supremumsmalleraprimeisfinite}, we conclude that 
\begin{align}
\label{insidetheinterval2}
\sup_{x \in [a',b')} \E_{x} \left[  \mc E_{H(b')} e^{- q H(b')}, \sigma(a',b') < H(b') < \infty \right] < \infty.
\end{align}
Combining \eqref{supremumsmalleraprimeisfinite}, \eqref{insidetheinterval1} and \eqref{insidetheinterval2}, we get \eqref{belowb'} and thus Theorem \ref{Theorem2} is established.
\end{proof}

\subsection{Proof of Theorem 1.2}
We now turn to the proof of Theorem \ref{Theorem3}. As stated in the Introduction, \eqref{conditionlimsupatinfinity} reduces to the more explicit criterion \eqref{conditionwhenrecurrent} when $X$ is recurrent. So, we need to find criteria in terms of the growth and fragmentation rates that ensure recurrence of $X$, when the fragmentation rate is self-similar. 
Since $X$ is irreducible, its trajectories between jumps are increasing and the jumps are only negative, point-recurrence can only fail when the paths converge almost surely to $0$ or to $\infty$. To exclude these cases, we resort to Foster-Lyapunov criteria. We refer to \cite{HAIRER16} or \cite{MT09} for a more comprehensive account. In brief, one wishes to find a smooth convex function $V: \zeroinf \to (0, \infty)$ such that
\begin{equation}
\label{definitionofV}
V(x) = \begin{cases}
x^a \quad \quad \quad x \geq x_{\infty} \\
x^{-b} \quad \quad 0< x \leq x_0,
\end{cases}
\end{equation}
for some $a,b>0$ and $0 < x_0 < x_\infty$, and such that, for all $x < x_0$ and $x > x_{\infty}$, one has
\begin{equation}
\label{thegeneratorisnegative}
\mc G V(x) \leq 0.
\end{equation}
This implies that $f\left(X_{t \wedge H(x_0)}\right)$ and $f \left(X_{t \wedge \sigma(x_{\infty}, \infty)} \right)$ are  $\p_{x}$-supermartingale respectively for all $0<x<x_0$ and all $x \geq x_\infty$, which is a sufficient condition to avoid that $X$ converges either to $0$ or to $\infty$.

When $k$ is self-similar, \textit{i.e.} it has the form \eqref{selfsimilarfragmentationkernel}, the generator is the following:
\begin{equation}
\label{definitionofgselfsimilar}
\G f (x)  =  \tau(x)f'(x) + B(x) \int_0^1 \left(f(y)- f(x) \right)  k_0(z)dz.
\end{equation}
Hence, for $0< x \leq x_0$, 
\begin{equation}
\G V(x) = x^{-b}  \left( - b \frac{\tau(x)}{x} + B(x) \int_0^1 \left(z^{-b} - 1 \right)  \rho(z)dz \right) = x^{-b}\left( - b \frac{\tau(x)}{x} + B(x) \left(M_{_{-b}} - M_{_0} \right) \right), 
\end{equation}
and, for $x \geq x_{\infty}$,
\begin{equation}
\G V(x) = x^{a} \left(  a \frac{\tau(x)}{x} + B(x) \int_0^1 \left(z^{a} - 1 \right)  \rho(z)dz \right) = x^{a} \left(  a \frac{\tau(x)}{x} + B(x) \left(M_a -M_{_0} \right) \right).
\end{equation}
This means that if \eqref{conditionrecurrencessinf} holds and
\begin{equation}
\label{conditionrecurrencesszero}
\frac{\tau(x)}{xB(x)} \geq \frac{1}{b} \left( M_{_{-b}} - M_{_0} \right)  \quad \quad \text{for all} \; x \leq x_0,
\end{equation}
then $X$ is recurrent. Condition \eqref{conditionrecurrencesszero} is directly verified under our assumptions, since \eqref{Bcontinuousbounded} and \eqref{Tfinite} implies that $\lim_{x \to 0}  \tau(x)/xB(x) = \infty$ and thus \eqref{assmoments} and \eqref{conditionrecurrencessinf} are enough to ensure point recurrence of $X$. We already argued that condition \eqref{conditionwhenrecurrent} ensures \eqref{conditionlimsupatinfinity} when $X$ is recurrent, and so the claim follows applying Theorem \ref{Theorem2}.

\section{More examples}
\label{Section5}

Here we deal with the case in which $B(x)=B >0$ and $N(x)=N>0$ are constant. The generator of the process $X$ is then
\begin{equation}
\label{definitionofgBconstant}
\G f (x) = \tau(x)f'(x) + B \int_0^x \left(f(y)- f(x) \right)  k(x,y) dy,
\end{equation}
with $\int_0^x k(x, y) dy =N$ for all $x\geq0$. The Feynman-Kac formula gives
\begin{equation}
T_t f (x) = e^{B(N-1)t} \; \E_x \left[  f(X_t) \right].
\end{equation}
If $X$ is recurrent, then, $\p_x \left( H(x) < \infty \right) = 1$ and \eqref{Lequal1} holds with $\lambda = B(N-1)$. In this case, we cannot rely on criterion \eqref{conditionwhenrecurrent} to prove exponential convergence, as $\lim_{x \to \infty} B(x)(N(x)-1)= B(N-1) = \lambda$.

However, if we can find find sufficient conditions for $X$ to be positive recurrent, then it has a (unique) stationary distribution $\nu$ and the convergence 
\begin{equation}
\lim_{t \to \infty} e^{- B(N-1)t} T_t f(x) = \langle \nu, f \rangle, 
\end{equation}
holds for all continuous functions with compact support. If $X$ is further exponentially ergodic, then \eqref{definitionexponentialconvergence} holds. We resort again to Foster-Lyapunov techniques. We define, for $s \in \mathbb{R}$,
\begin{equation}
M_x(s) \coloneqq \frac{1}{B} \int_0^x (y/x)^{s} k(x,y) dy  \quad \quad \text{and} \quad \quad M(s) \coloneqq \sup_{x>0} M_x(s),
\end{equation}
and we assume that
\begin{equation}
\label{assumptionmomentsconstant} 
\text{there exists} \; a >0 \; \text{and} \; b>0 \; \text{such that} \; M(a) < M(0) \; \text{and} \; M_{-b} < \infty.
\end{equation}
Note that $M$ is decreasing and that $N = M(0)$. If $V$ is as in \eqref{definitionofV}, we have 
\begin{equation}
\G V(x) = \begin{cases} x^{-b} \left( -b \frac{\tau(x)}{x} + B  \left( M(-b) - M( \right) \right) \quad \quad x \leq x_0 \\
x^{a} \left( a \frac{\tau(x)}{x} + B  \left(- M_a - M_{_0} \right) \right) \quad \quad \quad x \geq x_\infty.
\end{cases}
\end{equation}

We already argued that if $\mc G V(x) \leq 0$, then $X$ is point recurrent. If one can further find $\alpha>0$, $0< \alpha' < \infty$ and $K$ compact in $\zeroinf$ such that
\begin{equation}
\label{conditionexpharrisrecurrence}
\G V(x) \leq - \alpha V(x) + \alpha' \mathbf{1}_{K},
\end{equation}
then $X$ is exponentially ergodic. This happens if the two conditions
\begin{equation}
\label{conditionexpharrisrecurrencezero}
\frac{\tau(x)}{x} \geq \frac{B}{b}  \left( M_{_{-b}} - M_{_0} \right) \quad \quad x \leq x_0,
\end{equation}
and
\begin{equation}
\label{conditionexpharrisrecurrenceinf}
 \frac{\tau(x)}{x} \leq \frac{B}{a} \left( M_{_0} - M_a \right) \quad \quad x \geq x_\infty,
\end{equation}
hold. Notice that 
\eqref{conditionexpharrisrecurrencezero} is directly verified as soon as the moment $M_{-b}$ is defined, since \eqref{Tfinite} implies that $\lim_{x \to 0}  \tau(x)/x = \infty$. On the other hand, \eqref{conditionexpharrisrecurrenceinf} seems natural, as a bound on the growth is expected when the fragmentations are bounded. Moreover, the bound is not too restrictive, as we are already assuming \eqref{Timetoinfinity}. To sum up, we have the following result. 
\begin{prop}
Assume \eqref{tauLipschitz}, \eqref{kcontinuous},  \eqref{Timetoinfinity}, \eqref{conditionequivalenttoirreducibility} and \eqref{vagueconvergenceofthekernel}. Assume that $B(x)=B$ and $N(x)=N$ for all $x>0$ and $B, N >0$. If \eqref{assumptionmomentsconstant} and \eqref{conditionexpharrisrecurrenceinf} hold, then there is Malthusian behaviour with exponential speed of convergence.
\end{prop}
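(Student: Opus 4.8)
The plan is to use that, because $B$ and $N$ are constant, the Feynman--Kac representation established in Section~\ref{Section3} collapses to $e^{-B(N-1)t}T_tf(x)=\E_x[f(X_t)]$, so that the Malthusian behaviour with exponential speed of convergence for $(T_t)_{t\ge0}$ is \emph{equivalent} to the exponential ergodicity of the instrumental process $X$, with Malthus exponent $\lambda=B(N-1)$. Indeed, under \eqref{tauLipschitz}, \eqref{kcontinuous}, \eqref{Timetoinfinity}, \eqref{conditionequivalenttoirreducibility} and \eqref{vagueconvergenceofthekernel} the semigroup and its Feynman--Kac representation exist, and $X$ is a conservative Feller piecewise-deterministic Markov process on $[0,\infty)$, irreducible on $\zeroinf$ by \eqref{conditionequivalenttoirreducibility} and non-explosive by \eqref{Timetoinfinity}. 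Thus it suffices to show that $X$ is positive Harris recurrent with a unique stationary probability measure $\nu$ and that it is $V$-uniformly exponentially ergodic; then, applying the ergodic estimate to continuous compactly supported $f$ (which are bounded, hence dominated by $V$ after normalising $V\ge1$), one gets $e^{-\lambda t}T_tf(x)=\langle\nu,f\rangle+o(e^{-\beta t})$, that is \eqref{definitionexponentialconvergence} with $\rho=\lambda$, $h\equiv1$ and asymptotic profile $\nu$. The value $h\equiv1$ is consistent with \eqref{definitionofh}: here $\mc E_t=e^{\lambda t}$, so $L_{x,x_0}(\lambda)=\p_x(H(x_0)<\infty)=1$ once recurrence is established.

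Second, I would verify the Foster--Lyapunov drift condition. Take the smooth convex $V$ of \eqref{definitionofV}, normalised so that $V\ge1$, with $a,b>0$ furnished by \eqref{assumptionmomentsconstant}, and compute $\mc G V$ from \eqref{definitionofgBconstant} as already done in the text. For $x\ge x_\infty$ the bracket in $\mc G V(x)=x^a(\,\cdot\,)$ is bounded above by a strictly negative constant, because $M_x(a)\le M(a)<M(0)$ and because \eqref{conditionexpharrisrecurrenceinf} controls $\tau(x)/x$; hence $\mc G V(x)\le-\alpha V(x)$ there. For $0<x\le x_0$, the moment $M_x(-b)\le M_{-b}<\infty$ is finite by \eqref{assumptionmomentsconstant} --- which is exactly what makes $\mc G V$ well defined near the origin --- while $\tau(x)/x\to\infty$ as $x\to0+$ under the standing assumption $T<\infty$ (since $B$ is bounded), so that \eqref{conditionexpharrisrecurrencezero} holds and the bracket in $\mc G V(x)=x^{-b}(\,\cdot\,)$ is again bounded above by a strictly negative constant, giving $\mc G V(x)\le-\alpha V(x)$. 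On the compact transition region $[x_0,x_\infty]$, $\mc G V$ is bounded, by continuity of $\tau$, $B$ and $x\mapsto k(x,\cdot)$ and boundedness of the jump rate. Choosing a compact $K\supset[x_0,x_\infty]$ and $\alpha>0$, $\alpha'<\infty$ appropriately, one obtains \eqref{conditionexpharrisrecurrence}, namely $\mc G V\le-\alpha V+\alpha'\mathbf 1_K$.

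Third, I would pass from the drift condition to exponential ergodicity. Since $X$ is a $\phi$-irreducible Feller process with the piecewise-deterministic structure of \eqref{definitionofgBconstant}, every compact subset of $\zeroinf$ is petite --- this is the standard density/irreducibility argument for PDMPs of this kind, carried out for precisely this family in \cite{BOUGUET18}. Then \eqref{conditionexpharrisrecurrence} together with the Down--Meyn--Tweedie theorem (see \cite{MT09}; see also \cite{HAIRER16}) shows that $X$ is positive Harris recurrent with a unique stationary probability measure $\nu$ and that there are $C<\infty$, $\gamma>0$ with $\sup_{|f|\le V}\bigl|\E_x[f(X_t)]-\langle\nu,f\rangle\bigr|\le CV(x)e^{-\gamma t}$. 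Combined with the first step, this proves the Proposition.

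The hard part is the second step. Two points require care: first, that $\mc G V$ is finite near $0$, which is the sole purpose of the hypothesis $M_{-b}<\infty$; second, that the drift is \emph{strictly} negative (not merely nonpositive) outside a compact set, which is where the strict inequality $M(a)<M(0)$, the balance \eqref{conditionexpharrisrecurrenceinf} at infinity, and the blow-up of $\tau(x)/x$ at $0$ (a consequence of $T<\infty$) are all used. Verifying that compacts are petite for $X$ is routine but relies on the irreducibility hypothesis \eqref{conditionequivalenttoirreducibility} and the deterministic-flow-plus-jumps structure of $X$; alternatively one can simply invoke the positive-recurrence analysis of \cite{BOUGUET18} and upgrade it to the exponential rate via the Lyapunov function above.
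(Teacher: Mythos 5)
Your proposal is correct and follows essentially the same route as the paper: the Feynman--Kac representation reduces to $T_tf(x)=e^{B(N-1)t}\E_x[f(X_t)]$, and exponential ergodicity of $X$ is obtained from the Foster--Lyapunov drift condition \eqref{conditionexpharrisrecurrence} with the Lyapunov function $V$ of \eqref{definitionofV}, the condition near $0$ being automatic from $T<\infty$ and the condition at infinity coming from \eqref{assumptionmomentsconstant} and \eqref{conditionexpharrisrecurrenceinf}. The extra detail you supply (petite compact sets, the Down--Meyn--Tweedie theorem) is exactly what the paper delegates to \cite{MT09} and \cite{HAIRER16}.
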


\begin{oss}
We know that if $\mc A h = \lambda h$ for some $\lambda$, than $\mc H f(x) = h(x)^{-1} \mc A (hf)(x) - \lambda f$
is the generator of a Markov process, say $Z$.
Then,
\begin{equation}
T_t f(x) = e^{\lambda t} h(x)  \E_x \left[  f(Z_t)/ h(Z_t) \right].
\end{equation}
When $B$ and $N$ are constant, $\mc A 1= B(N-1) 1$, and the above formula holds with $\lambda = B(N-1)$ and $h=1$.  We conclude noticing that the same value for the Malthus exponent was obtained by Bertoin and Watson the case in which the kernel is homogeneus and there is conservation of mass (see Chapter $7$ of \cite{BW18}).
\end{oss}

	\bibliographystyle{alpha}	
	\bibliography{biblio.bib}

\newcommand{\etalchar}[1]{$^{#1}$}
\begin{thebibliography}{BSCT{\etalchar{+}}11}

\bibitem[BA67]{BA67}
George~I. Bell and Ernest~C. Anderson.
\newblock Cell growth and division: I. {A} mathematical model with applications
  to cell volume distributions in mammalian suspension cultures.
\newblock {\em Biophysical Journal}, 7(4):329–351, 1967.

\bibitem[BCG{\etalchar{+}}13]{BCGMZ13}
Jean-Baptiste Bardet, Alejandra Christen, Arnaud Guillin, Florent Malrieu, and
  Pierre-Andr\'{e} Zitt.
\newblock Total variation estimates for the {TCP} process.
\newblock {\em Electron. J. Probab.}, 18:no. 10, 21, 2013.

\bibitem[BCGM19]{BCGM19}
Vincent Bansaye, Bertrand Cloez, Pierre Gabriel, and Aline Marguet.
\newblock A non-conservative {H}arris’ ergodic theorem.
\newblock {\em arXiv:1903.03946}, 2019.

\bibitem[Ber19]{BERT18}
Jean Bertoin.
\newblock On a {F}eynman-{K}ac approach to growth-fragmentation semigroups and
  their asymptotic behaviors.
\newblock {\em Journal of Functional Analysis}, 2019.

\bibitem[BG17]{BG17}
\'{E}tienne Bernard and Pierre Gabriel.
\newblock Asymptotic behavior of the growth-fragmentation equation with bounded
  fragmentation rate.
\newblock {\em J. Funct. Anal.}, 272(8):3455--3485, 2017.

\bibitem[BMR02]{BMR02}
Francois Baccelli, David~R. Mcdonald, and Julien Reynier.
\newblock A mean-field model for multiple {TCP} connections through a buffer
  implementing red.
\newblock {\em TREC}, 2002.

\bibitem[Bou18]{BOUGUET18}
Florian Bouguet.
\newblock A probabilistic look at conservative growth-fragmentation equations.
\newblock In {\em S\'{e}minaire de {P}robabilit\'{e}s {XLIX}}, volume 2215 of
  {\em Lecture Notes in Math.}, pages 57--74. Springer, Cham, 2018.

\bibitem[BSCT{\etalchar{+}}11]{BSC11}
Harvey~T. Banks, Karyn~L. Sutton, William Clayton~Thompson, Gennady Bocharov,
  Dirk Roose, Tim Schenkel, and Andreas Meyerhans.
\newblock Estimation of cell proliferation dynamics using {CFSE} data.
\newblock {\em Bull. Math. Biol.}, 73(1):116--150, 2011.

\bibitem[BW16]{BW16}
Jean Bertoin and Alexander~R. Watson.
\newblock Probabilistic aspects of critical growth-fragmentation equations.
\newblock {\em Adv. in Appl. Probab.}, 48(A):37--61, 2016.

\bibitem[BW18]{BW18}
Jean Bertoin and Alexander~R. Watson.
\newblock A probabilistic approach to spectral analysis of growth-fragmentation
  equations.
\newblock {\em J. Funct. Anal.}, 274(8):2163--2204, 2018.

\bibitem[Cav19]{BC19}
Benedetta Cavalli.
\newblock On a family of critical growth-fragmentation semigroups and refracted
  {L}{\'e}vy processes.
\newblock {\em Acta Applicandae Mathematicae}, 2019.

\bibitem[CCnM11]{CCM11}
Mar\'\i a~J. C\'aceres, Jos\'e~A. Ca\~nizo, and St\'ephane Mischler.
\newblock Rate of convergence to an asymptotic profile for the self-similar
  fragmentation and growth-fragmentation equations.
\newblock {\em J. Math. Pures Appl. (9)}, 96(4):334--362, 2011.

\bibitem[CLO{\etalchar{+}}09]{CLODLMP09}
Vincent Calvez, Natacha Lenuzza, Dietmar Oelz, Jean-Philippe Deslys, Pascal
  Laurent, Franck Mouthon, and Beno\^{i}t Perthame.
\newblock Size distribution dependence of prion aggregates infectivity.
\newblock {\em Math. Biosci.}, 217(1):88--99, 2009.

\bibitem[Clo17]{CLOEZ17}
Bertrand Cloez.
\newblock Limit theorems for some branching measure-valued processes.
\newblock {\em Adv. in Appl. Probab.}, 49(2):549--580, 2017.

\bibitem[CMP10]{CMP10}
Djalil Chafa\"{\i}, Florent Malrieu, and Katy Paroux.
\newblock On the long time behavior of the {TCP} window size process.
\newblock {\em Stochastic Process. Appl.}, 120(8):1518--1534, 2010.

\bibitem[CV16]{CV16}
Nicolas Champagnat and Denis Villemonais.
\newblock Exponential convergence to quasi-stationary distribution and
  {$Q$}-process.
\newblock {\em Probab. Theory Related Fields}, 164(1-2):243--283, 2016.

\bibitem[CV17]{CV17}
Nicolas Champagnat and Denis Villemonais.
\newblock {General criteria for the study of quasi-stationarity}.
\newblock Working paper or preprint, 2017.

\bibitem[Dav84]{D84}
Mark H.~A. Davis.
\newblock Piecewise-deterministic {M}arkov processes: a general class of
  nondiffusion stochastic models.
\newblock {\em J. Roy. Statist. Soc. Ser. B}, 46(3):353--388, 1984.

\bibitem[DE16]{DE16}
Marie Doumic and Miguel Escobedo.
\newblock Time asymptotics for a critical case in fragmentation and
  growth-fragmentation equations.
\newblock {\em Kinet. Relat. Models}, 9(2):251--297, 2016.

\bibitem[Dei85]{DEIMLING85}
Klaus Deimling.
\newblock {\em Nonlinear functional analysis}.
\newblock Springer-Verlag, Berlin, 1985.

\bibitem[DHKR15]{DHNR15}
Marie Doumic, Marc Hoffmann, Nathalie Krell, and Lydia Robert.
\newblock Statistical estimation of a growth-fragmentation model observed on a
  genealogical tree.
\newblock {\em Bernoulli}, 21(3):1760--1799, 2015.

\bibitem[DJG10]{DG10}
Marie Doumic~Jauffret and Pierre Gabriel.
\newblock Eigenelements of a general aggregation-fragmentation model.
\newblock {\em Math. Models Methods Appl. Sci.}, 20(5):757--783, 2010.

\bibitem[EK86]{EK86}
Stewart~N. Ethier and Thomas~G. Kurtz.
\newblock {\em Markov processes: characterization and convergence}.
\newblock Wiley Series in Probability and Mathematical Statistics: Probability
  and Mathematical Statistics. John Wiley \& Sons, Inc., New York, 1986.

\bibitem[Hai16]{HAIRER16}
Martin Hairer.
\newblock Convergence of {M}arkov processes.
\newblock Online lecture notes, 2016.

\bibitem[KPS14]{KPS13}
Pakdaman Khashayar, Beno\^{i}t Perthame, and Delphine Salort.
\newblock Adaptation and fatigue model for neuron networks and large time
  asymptotics in a nonlinear fragmentation equation.
\newblock {\em The Journal of Mathematical Neuroscience}, 2014.

\bibitem[LP09]{LP09}
Philippe Lauren\c{c}ot and Beno\^{i}t Perthame.
\newblock Exponential decay for the growth-fragmentation/cell-division
  equation.
\newblock {\em Commun. Math. Sci.}, 7(2):503--510, 2009.

\bibitem[MD86]{MD86}
Johan~A. Metz and Odo Diekmann, editors.
\newblock {\em The dynamics of physiologically structured populations},
  volume~68 of {\em Lecture Notes in Biomathematics}.
\newblock Springer-Verlag, Berlin, 1986.
\newblock Papers from the colloquium held in Amsterdam, 1983.

\bibitem[Mic06]{M06}
Philippe Michel.
\newblock Existence of a solution to the cell division eigenproblem.
\newblock {\em Math. Models Methods Appl. Sci.}, 16(7, suppl.):1125--1153,
  2006.

\bibitem[MMP05]{MMP05}
Philippe Michel, St\'ephane Mischler, and Beno\^{i}t Perthame.
\newblock General relative entropy inequality: an illustration on growth
  models.
\newblock {\em J. Math. Pures Appl. (9)}, 84(9):1235--1260, 2005.

\bibitem[MS16]{MS16}
S.~Mischler and J.~Scher.
\newblock Spectral analysis of semigroups and growth-fragmentation equations.
\newblock {\em Ann. Inst. H. Poincar\'e Anal. Non Lin\'eaire}, 33(3):849--898,
  2016.

\bibitem[MT09]{MT09}
Sean Meyn and Richard~L. Tweedie.
\newblock {\em Markov chains and stochastic stability}.
\newblock Cambridge University Press, Cambridge, second edition, 2009.
\newblock With a prologue by Peter W. Glynn.

\bibitem[Per07]{PERTHAME07}
Beno\^{i}t Perthame.
\newblock {\em Transport equations in biology}.
\newblock Frontiers in Mathematics. Birkh\"auser Verlag, Basel, 2007.

\bibitem[PPS13]{PPS13}
Khashayar Pakdaman, Beno\^{i}t Perthame, and Delphine Salort.
\newblock Relaxation and self-sustained oscillations in the time elapsed neuron
  network model.
\newblock {\em SIAM J. Appl. Math.}, 73(3):1260--1279, 2013.

\bibitem[PR05]{PR05}
Beno\^{i}t Perthame and Lenya Ryzhik.
\newblock Exponential decay for the fragmentation or cell-division equation.
\newblock {\em J. Differential Equations}, 210(1):155--177, 2005.

\bibitem[Pro05]{PROTTER05}
Philip~E. Protter.
\newblock {\em Stochastic integration and differential equations}, volume~21 of
  {\em Stochastic Modelling and Applied Probability}.
\newblock Springer-Verlag, Berlin, 2005.
\newblock Second edition. Version 2.1, Corrected third printing.

\bibitem[SMPF05]{SMPT05}
Eric~J. Stewart, Richard Madden, Gregory Paul, and Taddei François.
\newblock Aging and death in an organism that reproduces by morphologically
  symmetric division.
\newblock {\em PLoS Biol}, 3, 2005.

\end{thebibliography}

\end{document}